\documentclass[english]{article}
\usepackage{preamble}
\usepackage{comment}

\begin{document}

\title{On the Strict Picard Spectrum of Commutative Ring Spectra}
\author{Shachar Carmeli}
\date{\today}
\maketitle

\abstract{
We compute the connective spectra of maps from $\ZZ$ to the Picard spectra of the spherical Witt vectors associated with perfect rings of characteristic $p$. As an application, we determine the connective spectrum of maps from $\ZZ$ to the Picard spectrum of the sphere spectrum. 
}
\tableofcontents{}

\section{Introduction}
\subsection{Background}
Picard groups play a central role in algebraic geometry and number theory. Among other things, they are useful in studying unramified abelian Galois extensions of number and function fields and embeddings of varieties into projective spaces. The Picard group of a scheme $S$, consisting of invertible quasi-coherent $\mathcal{O}_S$-modules, is closely related to the sheaf of units in $\mathcal{O}_S$, via the formula 
\begin{equation}\label{eq:Pic_H^1}
\Pic(S)= H^1(S,\mathcal{O}_S^\times).
\end{equation}

In higher algebra, the $\infty$-category of connective spectra provides a natural generalization of abelian groups, and commutative ring spectra generalize classical commutative rings. Accordingly, to a commutative ring spectrum $R$, Hopkins associated a connective spectrum $\pic(R)$ of $\otimes$-invertible $R$-modules. Conveniently, in the higher algebraic framework, the relationship between the Picard group and the units, indicated in \Cref{eq:Pic_H^1}, becomes more transparent. Namely, the units in $R$ are obtained from the Picard spectrum of $R$ by applying the loops functor: 
\[
\Omega\pic(R) \simeq R^\times. 
\]
As a result, the positive homotopy groups of $\pic(R)$ are easily computable in terms of $R$ itself, as $\pi_1\pic(R)\simeq \pi_0R^\times$ and $\pi_i\pic(R) = \pi_{i-1}R$ for $i\ge 2$.

While the Picard spectra of commutative ring spectra are the most direct analogs of the Picard groups of commutative rings, there are features of the Picard groups that do not generalize well. For example, if $\mathcal{L}$ is an invertible $\mathcal{O}_S$-module over a scheme $S$, the tensor algebra 
\[
T(\mathcal{L}) = \mathcal{O}_S \oplus \mathcal{L}\oplus \mathcal{L}^{\otimes 2} \oplus \dots
\] is commutative, and the relative spectrum $\Spec(T(\mathcal{L}))$ is the total space of the dual line bundle $\mathcal{L}^\vee$. This way, one can treat elements of the Picard group both as algebraic objects (invertible sheaves) or as geometric objects (line bundles). 

When passing to the higher categorical realm, the commutativity of the tensor algebra is no longer automatic. The key \emph{property} of invertible sheaves responsible for this commutativity is the triviality of the permutation action of the symmetric group $\Sigma_n$ on $\mathcal{L}^{\otimes n}$. However, for modules over a commutative ring spectrum, the triviality of this action is not a property, but an \emph{additional structure}, which might not be unique or even not exist.  

One way to remedy this deficiency is to consider only invertible modules \emph{together with} a coherent trivialization of these permutation actions. Moreover, there is a convenient way to encode these trivializations and their interaction with the tensor product. Namely, the discrete spectrum $\ZZ$ is generated by a single ''strictly commutative" element, and so one can single out elements of the Picard spectrum with such strict commutative behavior as images of maps from $\ZZ$. This leads to the following notion. 
\begin{defn}
Let $R$ be a commutative ring spectrum. A \emph{strictly invertible $R$-module} is a morphism of spectra $\ZZ \to \pic(R)$. 
\end{defn}
The collection of strictly invertible $R$-modules identifies with the set of connected components of a connective spectrum $\spic(R)$, the strict Picard spectrum of $R$,  which provides a canonical delooping of the strict units spectrum $\GG_m(R)$. 
For strictly invertible modules, one recovers the familiar relationship between line bundles and invertible sheaves. Indeed, a strictly invertible $R$-module can be seen as an $\EE_\infty$-map 
\[
\mathcal{L}\colon\ZZ \to \Omega^\infty\pic(R)\subseteq \Mod_R^\simeq,
\] 
for which we can form the ''Thom object" (as in \cite{ando2014categorical}) over the natural numbers $\NN\subseteq\ZZ$: 
\[\Th_{\NN}(\mathcal{L}):=\underset{\NN}{\colim}\mathcal{L}.
\] 
This $R$-module admits a canonical commutative $R$-algebra structure, generalizing to higher algebra the commutative algebra structure on the tensor algebra of an invertible sheaf.

Despite its conceptual advantages, it is generally hard to compute the homotopy groupts of $\spic(R)$. So far, few cases have been worked out beyond discrete commutative ring spectra. An interesting example arising from chromatic homotopy theory has been computed recently by Burklund, Schlank, and Yuan: 

\begin{othm}[{{\cite[Theorem 8.17]{burklund2022chromatic}}}] 
Let $E_n$ be the Lubin-Tate theory associated with a formal group law of height $n>0$ over an algebraically closed field $\kappa$ of characteristic $p$. Then, 
\[
\spic(E_n)\simeq \Sigma^{n+2}\ZZ_p \oplus \Sigma \kappa^\times. \footnote{The resulting formula $\GG_m(E_n)\simeq \kappa^\times \oplus \Sigma^{n+1}\ZZ_p$ was conjectured, and later proved, by Hopkins and Lurie.}
\]
\end{othm}

The initial example of a commutative ring spectrum is the sphere spectrum $\Sph$. This paper is dedicated to the computation of the strict Picard spectrum of the sphere spectrum, its $p$-completions for various primes $p$, and other commutative ring spectra of a similar flavor. 
\subsection{Main Results} 

Our analysis of the strict Picard spectrum starts in the $p$-complete world. Recall that, for a perfect ring $\kappa$ of characteristic $p$, one associates a $p$-complete connective commutative ring spectrum $\Sph\WW(\kappa)$, known as the ring of \emph{spherical Witt vectors} (see \cite[Example 5.2.7]{lurie2018elliptic}). They  generalize the $p$-complete sphere in that $\Sph\WW(\FF_p)\simeq \Sph_p$. Our first result is a complete determination of the strict Picard spectrum of these commutative ring spectra in terms of classical invariants of $\kappa$. 
\begin{theorem} [\Cref{strict_picard_spherical_witt}] \label{intro:spherical_witt}
Let $\kappa$ be a perfect ring of characteristic $p$, and denote by $\Cl(\kappa)$ the Picard group of the abelian category $\Mod_\kappa^\heartsuit$ of discrete $\kappa$-modules. The strict Picard spectrum of the spherical Witt vectors $\Sph\WW(\kappa)$
is given by
\[
\spic(\Sph\WW(\kappa))\simeq \Cl(\kappa) \oplus \Sigma \kappa^\times. 
\]
\end{theorem}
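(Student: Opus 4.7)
The plan is to reduce the computation to the discrete residue ring $\kappa$ by showing that the natural map $\spic(\Sph\WW(\kappa)) \to \spic(\kappa)$ induced by the augmentation $\Sph\WW(\kappa) \twoheadrightarrow \kappa$ is an equivalence, with the target computed directly. For the target: since $\kappa$ is a discrete commutative ring, $\pic(\kappa)$ is $1$-truncated with $\pi_0 = \Pic(\kappa) = \Cl(\kappa)$ and $\pi_1 = \kappa^\times$; the Postnikov $k$-invariant lies in $\mathrm{Ext}^2_\ZZ(\Cl(\kappa),\kappa^\times)=0$ since $\ZZ$ is a PID, so $\pic(\kappa) \simeq \Cl(\kappa) \oplus \Sigma\kappa^\times$. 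Moreover, for any abelian group $A$ one has $\mathrm{map}_{\mathrm{Sp}_{\ge 0}}(\ZZ, A) \simeq A$: positive-degree homotopy of $\mathrm{map}_{\mathrm{Sp}}(\ZZ,A)$ vanishes because $\ZZ$ is $0$-truncated, while negative-degree integer cohomology operations are killed by the connective cover. Hence $\spic(\kappa) \simeq \Cl(\kappa) \oplus \Sigma\kappa^\times$, matching the claimed formula.

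To prove the equivalence, I would filter the residue map as
\[
\Sph\WW(\kappa) \longrightarrow W(\kappa) \longrightarrow W_n(\kappa) \longrightarrow \cdots \longrightarrow \kappa,
\]
with the first step further refined via the Postnikov tower of $\Sph\WW(\kappa)$ into square-zero extensions with kernels $\Sigma^n \pi_n \Sph\WW(\kappa)$. Since $\spic$ preserves limits, it suffices to handle each square-zero step. At each such step $R \to R_0$ with fiber $\Sigma^k A$, deformation theory identifies the fiber of $\pic(R)\to\pic(R_0)$ with $\Sigma^{k+1}A$. The key point is that $\mathrm{map}_{\mathrm{Sp}_{\ge 0}}(\ZZ, \Sigma^{k+1}A)$ is \emph{not} just $\Sigma^{k+1}A$: its lower homotopy in degrees $0,\ldots,k$ receives contributions $[\ZZ, \Sigma^{(k+1)-i}A]_{\mathrm{Sp}}$ from integer cohomology operations, which are nontrivial for torsion $A$. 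Feeding these into the long exact sequences for $\spic$, the connecting maps out of $\pi_1\spic(\text{previous stage})$ into such degree-$0$ pieces should systematically chop the principal units $1+pW(\kappa) \subset W(\kappa)^\times$ down to the Teichmüller image $\kappa^\times$, while the higher-degree connecting maps should cancel the corresponding ``top'' contributions, so that only $\Cl(\kappa)$ in degree $0$ and $\kappa^\times$ in degree $1$ survive.

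\textbf{The main obstacle} is proving these cancellations rigorously and globally across all Postnikov and Witt-vector stages. This requires identifying the connecting maps with recognizable arithmetic operations---most plausibly a Rezk-type logarithm for $p$-complete $\EE_\infty$-rings coupled with the Teichmüller character---and checking that their combined effect leaves exactly the stated homotopy. A cleaner alternative I would pursue in parallel is to construct a section of $\spic(\Sph\WW(\kappa))\to\spic(\kappa)$ directly, using the universal property of the spherical Witt vectors: strict units of $\kappa$ should lift canonically to Teichmüller-type strict units of $\Sph\WW(\kappa)$, and invertible $\kappa$-modules should lift via $W(\kappa)$ through classical $p$-adic deformation theory. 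Combined with the already-present residue map, this would exhibit $\spic(\kappa)$ as a retract of $\spic(\Sph\WW(\kappa))$ and reduce the theorem to identifying this retract via an obstruction argument.
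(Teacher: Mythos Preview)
Your approach has a genuine error at the very first step: the claim that $\pi_0\pic(\kappa)=\Cl(\kappa)$ is false. Here $\pic(\kappa)$ is the Picard spectrum of the $\infty$-category $\Mod_\kappa$ of $\kappa$-module \emph{spectra}, so suspensions of invertible discrete modules are invertible as well. In fact (see \Cref{Pic_ordinary_ring}) one has $\Pic(\kappa)\simeq \Cl(\kappa)\oplus C^0(\kappa;\ZZ)$, and the $\eta$-action on this group is given by the dimension, which is $(-1)^n$ on a suspension $\Sigma^n\mathcal{L}$ of a classical line bundle. Applying \Cref{strict_elements_1_truncated} correctly gives $\pi_0\spic(\kappa)=\Pic^0(\kappa)\simeq \Cl(\kappa)\oplus 2\,C^0(\kappa;\ZZ)$, not $\Cl(\kappa)$. (Relatedly, your splitting argument is off: the $k$-invariant of a $1$-truncated \emph{spectrum} does not live in $\Ext^2_\ZZ$ but in $[H\pi_0,\Sigma^2 H\pi_1]_{\Sp}$, where the class of $\eta$ is available and in fact nonzero here.)

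This is fatal to the strategy, because it shows that the augmentation map $\spic(\Sph\WW(\kappa))\to\spic(\kappa)$ is \emph{not} an equivalence. Already for $\kappa=\FF_p$ one has $\pi_0\spic(\Sph_p)=0$ while $\pi_0\spic(\FF_p)\simeq 2\ZZ\simeq\ZZ$, generated by $\Sigma^2\FF_p$. Hence neither the direct comparison nor the proposed ``retract'' variant can succeed: $\spic(\kappa)$ is strictly larger than $\spic(\Sph\WW(\kappa))$, not equal to it. Intuitively, the even suspensions $\Sigma^{2n}\kappa$ are strict Picard elements over $\kappa$ that do not lift to strict Picard elements over $\Sph\WW(\kappa)$. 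The paper's argument is orthogonal to deformation theory: it exploits the Tate-valued Frobenius to show that multiplication by $p$ is invertible on $\spic(\Sph\WW(\kappa))$ (\Cref{p_invertible_strict_picard}), whence $\spic\simeq\hom(\ZZ[1/p],\pic)$; the $p$-completeness of $\tau_{\ge 2}\pic$ then kills the higher part, and the extra $C^0(\kappa;\ZZ)$ summand disappears precisely because it admits no nonzero maps from $\ZZ[1/p]$.
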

Here and in the sequel, all the abelian groups on the right-hand side should be regarded as discrete connective spectra.
By taking $\kappa = \FF_p$, we obtain:
\begin{cor}
The strict Picard spectrum of the $p$-complete sphere is given by 
\[
\spic(\Sph_p) \simeq \Sigma\FF_p^\times.
\]
\end{cor}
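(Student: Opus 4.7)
The plan is to specialize \Cref{intro:spherical_witt} to the case $\kappa = \FF_p$. First, I would invoke the identification $\Sph\WW(\FF_p) \simeq \Sph_p$ noted in the paragraph introducing spherical Witt vectors, so that the left-hand side of the corollary is exactly the left-hand side of the theorem in this case. The theorem then yields
\[
\spic(\Sph_p) \simeq \Cl(\FF_p) \oplus \Sigma \FF_p^\times,
\]
and it remains only to compute $\Cl(\FF_p)$.

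The second step is to observe that $\Cl(\FF_p) = 0$. Since $\FF_p$ is a field, every discrete $\FF_p$-module splits as a (possibly infinite) direct sum of copies of $\FF_p$, and such a module is invertible under tensor product precisely when it is one-dimensional, hence isomorphic to the unit $\FF_p$ itself. Consequently, the Picard group of the abelian category $\Mod_{\FF_p}^\heartsuit$ is trivial, and substituting this into the formula above gives the claim.

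There is no real obstacle here; all the substantive work is packaged in \Cref{intro:spherical_witt}. The corollary is a one-line specialization once that theorem is in hand, serving mainly to highlight that for the $p$-complete sphere the only contribution to the strict Picard spectrum is the shifted units of the residue field.
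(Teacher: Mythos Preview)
Your proposal is correct and matches the paper's own proof essentially verbatim: the paper simply notes that $\Sph\WW(\FF_p)\simeq \Sph_p$ and $\Cl(\FF_p)=0$, then invokes \Cref{strict_picard_spherical_witt}. Your added justification for $\Cl(\FF_p)=0$ is a harmless elaboration of what the paper leaves implicit.
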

Our second result is a computation of the strict Picard spectrum of the sphere spectrum. Let $\widehat{\ZZ}\simeq \prod_p \ZZ_p$ be the profinite completion of the integers.  
\begin{theorem}[\Cref{strict_picard_sphere}]\label{intro:strict_sphere}
The strict Picard spectrum of the sphere spectrum is given by
\[
\spic(\Sph)\simeq \widehat{\ZZ}. 
\]
\end{theorem}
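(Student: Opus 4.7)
The plan is to derive the theorem from \Cref{intro:spherical_witt} (applied to $\kappa = \FF_p$) by assembling the $p$-complete and rational information through the arithmetic fracture square
\[
\Sph \simeq \Sph_\QQ \times_{(\prod_p \Sph_p)_\QQ} \prod_p \Sph_p.
\]
A preliminary calculation shows that $(\prod_p \Sph_p)_\QQ$ is discrete: its higher homotopy vanishes since $\pi_n \Sph$ is torsion for $n \geq 1$, and its $\pi_0$ is the ring of finite adeles $\mathbb{A}_f := \widehat{\ZZ} \otimes \QQ$. The first main step is to verify that $\pic$ sends this pullback of $\EE_\infty$-rings to a pullback of connective spectra; this should follow from the fact that $\Mod_\Sph$ decomposes as the analogous pullback of symmetric monoidal $\infty$-categories, which in turn induces a pullback decomposition on the subcategories of invertible objects. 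Applying the limit-preserving functor $\Map_{\Sp_{\geq 0}}(\ZZ, -)$ then yields
\[
\spic(\Sph) \simeq \spic(\Sph_\QQ) \times_{\spic(\mathbb{A}_f)} \prod_p \spic(\Sph_p).
\]

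Each remaining factor can be computed directly. By the corollary to \Cref{intro:spherical_witt}, one has $\prod_p \spic(\Sph_p) \simeq \Sigma \prod_p \FF_p^\times$, concentrated in degree $1$. For the discrete rings $R = \QQ$ and $R = \mathbb{A}_f$, the Picard spectrum $\pic(R)$ is $1$-truncated, with $\pi_0 = \ZZ$ (using $\Pic(\QQ) = 0$ trivially and $\Pic(\mathbb{A}_f) = 0$ by local triviality over a restricted product of fields) and $\pi_1 = R^\times$. Combined with the vanishing $H^1(\ZZ; A) = 0$ for every abelian group $A$, which follows from $H^{n+1}(K(\ZZ, n); A) = 0$ via the universal coefficient theorem, this yields
\[
\spic(\QQ) \simeq \ZZ \oplus \Sigma \QQ^\times, \qquad \spic(\mathbb{A}_f) \simeq \ZZ \oplus \Sigma \mathbb{A}_f^\times.
\]

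The Mayer--Vietoris long exact sequence thus reduces to
\[
0 \to \pi_1\spic(\Sph) \to \QQ^\times \oplus \prod_p \FF_p^\times \xrightarrow{\varphi} \mathbb{A}_f^\times \to \pi_0\spic(\Sph) \to \ZZ \xrightarrow{\mathrm{id}} \ZZ \to 0,
\]
with all higher $\pi_n \spic(\Sph)$ automatically zero; the final map is the identity because $\Sph_\QQ \to \mathbb{A}_f$ sends the shift class to the shift class. Hence $\pi_0\spic(\Sph) = \operatorname{coker}\varphi$ and $\pi_1\spic(\Sph) = \ker\varphi$. The cokernel is a classical idelic calculation: using $\mathbb{A}_f^\times = \QQ^\times \cdot \widehat{\ZZ}^\times$ (class number one of $\QQ$) together with the decomposition $\ZZ_p^\times = \FF_p^\times \times (1+p\ZZ_p)$ at odd $p$, one obtains $\operatorname{coker}\varphi \cong \widehat{\ZZ}$. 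The kernel is trivial: the only candidate $q = -1 \in \QQ^\times$ would require a Teichmüller lift $-1 \in \FF_2^\times$, which does not exist. The main obstacle is the descent step — verifying that $\pic$ commutes with the arithmetic fracture pullback — after which the remaining analysis is essentially a classical idelic computation.
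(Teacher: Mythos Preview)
Your overall architecture matches the paper's: reduce to $\spic(\Sph_p)$ via the arithmetic fracture square and finish with the same idelic computation of $\ker\psi$ and $\coker\psi$. The difference is in how the fracture square is transported to strict Picard spectra, and your version of that step has real problems.

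The paper does \emph{not} attempt to show that $\pic$ preserves the fracture pullback. Instead it applies the manifestly limit-preserving functor $(-)^\times$ to the fracture square, checks that the resulting square stays a pullback after one suspension (an elementary surjectivity check on $\pi_0$), and then applies $\hom_{\Sp^\cn}(\ZZ,-)$. A separate lemma (\Cref{connected_cover_iso_G_m}), proved using the already-known value of $\spic(\Sph_p)$, identifies $(\Sigma\Sph^\times)_\ZZ\simeq\spic(\Sph)$ and $(\Sigma\Sph_p^\times)_\ZZ\simeq\spic(\Sph_p)$. Because $(-)^\times$ and $\hom(\ZZ,-)$ commute with products, the corner $\prod_p\spic(\Sph_p)$ appears automatically, and because $\Sigma R^\times$ has vanishing $\pi_0$, no Picard groups of $\prod_p\Sph_p$ or $\mathbb{A}_f$ ever enter.

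Your route, by contrast, needs $\pic$ to preserve the fracture pullback, and you sketch this via descent for module categories. That descent statement is genuinely delicate and your sketch does not address it; in particular you silently replace $\pic(\prod_p\Sph_p)$ by $\prod_p\pic(\Sph_p)$, which already fails on $\pi_0$. More concretely, your claim that $\pi_0\pic(\mathbb{A}_f)=\ZZ$ is false: the ring $\mathbb{A}_f$ (and likewise $\widehat{\ZZ}=\pi_0\prod_p\Sph_p$) has a huge supply of idempotents, so by \Cref{Pic_ordinary_ring} one has $\Pic(\mathbb{A}_f)\simeq\Cl(\mathbb{A}_f)\oplus C^0(\Spec\mathbb{A}_f;\ZZ)$ with the second summand far larger than $\ZZ$. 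These extra $\pi_0$-classes would contaminate your Mayer--Vietoris sequence and must be shown to cancel; the paper's route via $\Sigma(-)^\times$ simply never sees them. Once past this point your endgame (the analysis of $\ker\psi$ and $\coker\psi$) is essentially identical to the paper's.
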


\begin{rem}
By taking loops, we can now compute the strict units in the commutative ring spectra $\Sph\WW(\kappa)$ and $\Sph$. Namely, we deduce that 
\[
\GG_m(\Sph\WW(\kappa))\simeq \kappa^\times
\]
and
\[
\GG_m(\Sph)\simeq 0.
\]
Some progress on the computation of $\GG_m(\Sph)$, using the symmetric power filtration of $\ZZ$ and the corresponding spectral sequence, was made by Fung in his Ph.D. thesis \cite{fung2020strict}.
\end{rem}

\subsection{Outline of the Proofs} 
It is a classical observation that for a perfect ring $\kappa$ of characteristic $p$, the map 
\[
p\colon \Cl(\kappa) \to \Cl(\kappa)
\]
is an isomorphism. 
This follows from the fact that, immediately from \Cref{eq:Pic_H^1}, the Frobenius isomorphism $\varphi\colon \kappa \iso \kappa$ acts on the Picard group of $\kappa$ via multiplication by $p$. 
More generally, for an arbitrary commutative ring $R$, the mod $p$ reduction map $\can\colon R\to R/p$ and the $p$-th power map $\varphi\colon R\to R/p$ satisfy  
\[
\can_*(\mathcal{L})^{\otimes p}\simeq \varphi_*(\mathcal{L})
\]
for every invertible $R$-module $\mathcal{L}$.

The first step in proving our main theorems is the observation that all the ingredients in this argument have higher-algebraic analogs. Namely,
\begin{itemize}
    \item The quotient $R/p$ corresponds to the \emph{Tate construction} $R^{tC_p}$.
    \item The reduction mod $p$ map $R\to R/p$ corresponds to the \emph{canonical map} $\can\colon R\to R^{tC_p}$. 
    \item The $p$-th power map corresponds to the \emph{Tate-valued Frobenius} $\fr\colon R\to R^{tC_p}$, defined by Nikolaus and Scholze in \cite{nikolaus2018topological}.
\end{itemize}
 
However, unlike in the classical case, the identity $\fr_*(\mathcal{L}) \simeq \can_*(\mathcal{L})^{\otimes p}$ \emph{does not hold} for a general invertible $R$-module $\mathcal{L}$. In fact, $\can_*(\mathcal{L})^{\otimes p}$ is given by the Tate construction of the group $C_p$ acting trivially on $\mathcal{L}^{\otimes p}$, while $\fr_*(\mathcal{L})$ is the Tate construction for the cyclic permutation action on the tensor factors. 

A key insight, which allows us to generalize the argument above from perfect rings to their spherical Witt vectors, is that the identity $\fr_*(\mathcal{L}) \simeq \can_*(\mathcal{L})^{\otimes p}$ \emph{does hold} if $\mathcal{L}$ is strictly invertible. Indeed, the data of such an element include, in particular, a trivialization of the cyclic permutation action on $\mathcal{L}^{\otimes p}$, and hence provide the desired identification. 
This consideration shows that $p$ is invertible on the strict Picard spectrum of a commutative ring spectrum $R$, provided that the Tate-valued Frobenius $\varphi^p\colon R\to R^{tC_p}$ is an isomorphism. In analogy with the classical story, we call such commutative ring spectra \emph{perfect}.  

For a perfect ring $\kappa$ of characteristic $p$, the commutative ring spectrum $\Sph\WW(\kappa)$ is perfect\footnote{This essentially follows from the  Segal Conjecture for cyclic groups, proved by Lin and Gunawardena, see \cite{lin1980conjectures} and \cite{gunawardena1980segal}.}, so the consideration above applies to it. 
Since $\Sph\WW(\kappa)$ is $p$-complete, the homotopy groups of $\pic(\Sph\WW(\kappa))$ are $p$-complete starting from $\pi_2$, and hence the invertibility of $p$ on the strict Picard spectrum shows that only the first two homotopy groups of  $\pic(\Sph\WW(\kappa))$ are involved in the formation of the strict Picard spectrum. This reduces the determination of $\spic(\Sph\WW(\kappa))$ to a finite computation with $\Pic(\Sph\WW(\kappa))$ and $\pi_0\Sph\WW(\kappa)^\times$. We then calculate these two groups using the corresponding invariants of $\kappa$ and prove \Cref{intro:spherical_witt}. 

To deduce \Cref{intro:strict_sphere} from \Cref{intro:spherical_witt}, we use the arithmetic fracture square. Roughly, this square shows how the sphere spectrum $\Sph$ is glued from its various $p$-completions $\Sph_p$ and its rationalization $\QQ$, along the ring of finite ad\'{e}les $\mathbb{A}$. Using this square, we reduce the calculation of $\spic(\Sph)$ to that of $\spic(\Sph_p)$, which was computed in \Cref{intro:spherical_witt}, and the computation of the spectra of maps from $\ZZ$ to $\QQ^\times$ and $\mathbb{A}^\times$, which is elementary.

\subsection{Organization}
In section 2, we introduce the canonical map and the Tate-valued Frobenius. We then compute the extension of scalars of perfect modules along these two maps. 

In section 3, we discuss the strict Picard spectrum. We recall the definition of the Picard spectrum, study its strict variant via the formalism of ''strict elements" in a connective spectrum, and explain how these strict elements in the Picard spectrum trivialize the cyclic actions on their tensor powers.  

In section 4, we consider the strict Picard spectra of commutative ring spectra, study their behavior under scalar extension along the Frobenius and canonical maps, and prove the main results of this paper: \Cref{intro:spherical_witt} and \Cref{intro:strict_sphere}.  
\subsection{Conventions}
\begin{itemize}
    \item By a commutative ring spectrum we mean a commutative (a.k.a. $\EE_\infty$) algebra in the symmetric monoidal $\infty$-category $\Sp$. For such a commutative ring spectrum $R$, we denote by $\Mod_R$ the $\infty$-category of $R$-module spectra and $\Perf_R$ the full subcategory spanned by the perfect (a.k.a. compact) $R$-modules.  
    \item For an object $X$ in a symmetric monoidal $\infty$-category, we denote by $\pi_i(X)$ the $i$-th homotopy group of the space of maps from the unit to $X$. 
    \item For a map of spaces $f\colon A\to B$ and an $\infty$-category $\cC$, we denote by $f^*\colon \cC^B\to \cC^A$ the functor of pre-composition with $f$, by $f_!$ its left adjoint and by $f_*$ its right adjoint (when they exist).
   
    \item We regard abelian groups as discrete connective spectra. In particular, we omit the standard notation $A\mapsto HA$ for the fully faithful embedding $\Ab \subseteq\Sp^\cn$. 
    
    \item We denote the tensor product of modules over a commutative ring spectrum $R$ simply by $\otimes$, or, if $R$ is not clear from the context, by $\otimes_R$. 
\end{itemize}
\subsection{Acknowledgements}

First, I would like to thank Thomas Nikolaus for the discussions that led me to think about this problem. I want to thank also Robert Burklund, Dustin Clausen, Jeremy Hahn, Markus Land, Akhil Mathew, Tyler Lawson, Maxime Ramzi, Tomer M. Schlank, Lior Yanovski, and Allen Yuan for valuable conversations, comments, and ideas related to this project, and to Shay Keidar, Tomer M. Schlank, and Allen Yuan for their useful comments on an earlier draft. The author is partially supported by the Danish National Research Foundation
through the Copenhagen Centre for Geometry and Topology (DNRF151).

\section{The Frobenius and Canonical Maps}

A commutative ring spectrum $R$ has two remarkable maps to the Tate construction $R^{tC_p}$; the canonical map 
\[
\can^p \colon R\to R^{tC_p},
\]
analogous to the reduction mod $p$ map
$
R\to R/p
$
in ordinary commutative algebra, 
and the Tate-valued Frobenius map 
\[
\fr^p\colon R\to R^{tC_p},
\]
analogous to the $p$-power map $R\to R/p$, which was constructed in \cite{nikolaus2018topological}.


Our analysis of the strict Picard spectrum of commutative ring spectra depends crucially on the nature of the extension of scalars functors from perfect $R$-modules to perfect $R^{tC_p}$-modules: 
\[
\fr^p_*,\can^p_*\colon \Perf_{R}\to \Perf_{R^{tC_p}}
\] 
In this section, we recall the construction of the maps $\can^p$ (in \Cref{subsec:The Canonical Map}) and $\fr^p$ (in \Cref{subsec:The Tate-Valued Frobenius}), and provide the required interpretation of the corresponding extension of scalar functors.

\subsection{The Canonical Map} 
\label{subsec:The Canonical Map}
The canonical map is a natural transformation $X\to X^{tC_p}$ defined for every object of a presentable stable $\infty$-category $X$. This map is a homotopy-theoretic upgrade of the reduction mod $p$ map $A\to A/p$ for abelian groups. We now recall the Tate-construction, $(-)^{tC_p}$, which is the target of the canonical map, define the canonical map, and recall the multiplicative properties of these to constructions. Finally, for a commutative ring spectrum $R$, we identify the functor of extension of scalars of perfect $R$-modules along the canonical map in terms of Tate construction.
\subsubsection{Tate Construction}
\begin{defn}[cf. {{\cite[Definition I.1.13]{nikolaus2018topological}}}]
For a finite group $G$ and a presentable stable $\infty$-category $\cC$, we denote by 
\[
(-)^{tG}\colon \cC^{BG}\to \cC
\]
the \tdef{Tate construction} of $G$, given by the cofiber of the norm map 
 \[
 \Nm_G\colon (-)_{hG}\to (-)^{hG}. 
 \]
 \end{defn}

The following characterization of $(-)^{tG}$ is shown in \cite[Theorem I.3.1]{nikolaus2018topological} for $\cC=\Sp$, but the proof works for every presentably symmetric monoidal stable $\infty$-category. 
\begin{prop} \label{Tate_lax}
The functor $(-)^{tG}\colon \cC^{BG} \to \cC$ is the initial exact, lax symmetric monoidal functor under $(-)^{hG}$ which carries induced local systems to $0$.
\end{prop}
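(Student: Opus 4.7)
The statement generalizes \cite[Theorem I.3.1]{nikolaus2018topological} from $\Sp$ to an arbitrary presentably symmetric monoidal stable $\infty$-category $\cC$. The plan is to verify that the argument in loc.\ cit.\ carries over, since the proof there uses only the stable, presentable, and symmetric monoidal structure of $\Sp$, and not any feature specific to spectra.

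First, I would equip $(-)^{tG}$ with the required structures. The constant-diagram functor $\cC \to \cC^{BG}$ is symmetric monoidal, so its right adjoint $(-)^{hG} = \lim_{BG}$ is lax symmetric monoidal. Finite ambidexterity of $G$ in the stable setting produces the norm $\Nm_G\colon (-)_{hG} \to (-)^{hG}$, and a direct check shows it can be promoted to a lax symmetric monoidal natural transformation. Therefore its cofiber $(-)^{tG}$ inherits a lax symmetric monoidal structure, with $(-)^{hG} \to (-)^{tG}$ lax symmetric monoidal by construction. The vanishing of $(-)^{tG}$ on induced local systems is the Wirthm\"uller isomorphism: for $X = e_!(Y)$ with $e\colon \mathrm{pt} \to BG$, the norm map is an equivalence.

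Next comes the universal property. Given another exact, lax symmetric monoidal $F\colon \cC^{BG} \to \cC$ under $(-)^{hG}$ annihilating induced local systems, I would produce a unique lax symmetric monoidal factorization $(-)^{tG} \to F$. By the cofiber description of $(-)^{tG}$, this reduces to exhibiting a canonical null-homotopy of the composite $(-)_{hG} \to (-)^{hG} \to F$, compatible with the lax symmetric monoidal structures. To construct it, I would resolve any $X \in \cC^{BG}$ by a bar resolution whose $n$-th term is the induced local system $e_!(e^*X \otimes G_+^{\otimes n})$. Since $F$ is exact and annihilates induced local systems, applying it to this resolution gives zero, so $F \circ (-)_{hG}$ is canonically null on each $X$; an analogous argument with mapping spaces yields uniqueness of the null-homotopy.

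The main obstacle I foresee is upgrading this underlying null-homotopy to one of lax symmetric monoidal natural transformations. Concretely, we need the space of lax symmetric monoidal null-homotopies of the composite $(-)_{hG} \to F$ to be contractible, not merely the space of underlying null-homotopies. I would tackle this by carrying out the bar resolution argument inside the $\infty$-category of lax symmetric monoidal exact functors $\cC^{BG} \to \cC$, using that the relevant bar construction is compatible with the monoidal structures. Apart from this compatibility check, no substantive new difficulty arises beyond the $\cC = \Sp$ case treated by Nikolaus-Scholze.
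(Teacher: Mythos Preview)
Your top-level approach matches the paper exactly: the paper gives no proof beyond the sentence preceding the proposition, which cites \cite[Theorem I.3.1]{nikolaus2018topological} and asserts that the argument there goes through for any presentably symmetric monoidal stable $\cC$. Proposing to reread Nikolaus--Scholze and check that nothing is specific to $\Sp$ is precisely what the paper does.

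Where your sketch departs is in what you take the Nikolaus--Scholze argument to be. They do not construct the null-homotopy by hand via a bar resolution. Instead, the heart of their proof is a general statement (their Theorem~I.3.6) that for a small stable symmetric monoidal $\infty$-category and a thick $\otimes$-ideal $\cI$, the Verdier quotient by $\cI$ carries a symmetric monoidal structure and is universal among exact lax symmetric monoidal functors annihilating $\cI$. Applying this with $\cI$ the induced local systems in $\cC^{BG}$ packages the factorization \emph{and} its lax symmetric monoidal compatibility into a single universal property, so the obstacle you flag---upgrading an underlying null-homotopy to a lax symmetric monoidal one---never arises. Since their Theorem~I.3.6 is already formulated and proved for arbitrary stable symmetric monoidal $\infty$-categories, following their actual route makes the passage from $\Sp$ to $\cC$ genuinely immediate, as the paper asserts; your bar-resolution variant would require the extra compatibility argument you yourself identify as incomplete.
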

Here, by ''induced," we mean local systems in the image of the functor $e_!\colon \cC \to \cC^{BG}$, given by the left Kan extension along the inclusion of the basepoint. 
\begin{rem}
Since $(-)^{tC_p}$ is lax symmetric monoidal, it promotes to a functor $\cC^{BG} \to \Mod_{\one^{tG}}(\cC)$. We will abuse notation and denote this functor again by $(-)^{tG}$. 
\end{rem}

\subsubsection{The canonical map}

By construction, we have natural maps $X^{hG}\to X^{tG}$ for every $X\in \cC^{BG}$. If $X$ is endowed with a trivial $G$-action, we also have a unit map $u\colon X\to X^{hG}$, given informally by embedding the constant maps $BG\to X$ into all the maps of this type. Composing them, we obtain the following:
\begin{defn}
Let $\cC \in \calg(\Prl^\st)$ and let $X\in \cC$, endowed with the trivial $G$-action. We define the \tdef{canonical map} 
\[
\mdef{\can_X^G}\colon X\to X^{tG} 
\]
to be the composite
\[
X\oto{u}X^{hG} \to X^{tG}.
\]  
 \end{defn}
 Thus, the maps $\can_X^G$ assemble to a lax symmetric monoidal natural transformation $\can^G\colon \Id_\cC \to (-)^{tG}$, where again, the target is the formation of Tate construction on \emph{constant} local systems over $BG$. The main case of interest for us is $G=C_p$, and we denote $\can^{C_p} := \can^p$ in this case. 
 
 Since the functor $(-)^{tG}$ is lax symmetric monoidal, it caries the unit to a commutative algebra $\one^{tG}$ in $\cC$. Thus, $(-)^{tG}$ promotes to a functor into $\one^{tG}$-modules, and by the free forgetful adjunction 
 \[
 \Mod_{\one^{tG}}(\cC) \adj \cC,
 \] 
 $\can^G$ corresponds to a  $\one^{tG}$-linear map
 \[
 \tilde{\can}^G\colon X\otimes \one^{tG} \to X^{tG}.  
 \]
 This map is an isomorphism in the case of perfect modules over commutative ring spectra. 
\begin{prop} \label{canonical_map_ext_scalars}
Let $R$ be a commutative ring spectrum. The map 
\[
\tilde{\can}^G \colon X\otimes R^{tG}\to X^{tG}
\] 
is an isomorphism for every perfect $R$-module $X\in \Perf_R$. Consequently, the functor 
$(-)^{tG}\colon \Perf_R \to \Mod_{R^{tG}}$ lands in $\Perf_{R^{tG}}$ and given by extension of scalars along the map $\can^G_R\colon R\to R^{tG}$. 
\end{prop}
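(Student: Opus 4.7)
The plan is to realize $\tilde{\can}^G$ as a natural transformation between two exact functors $\Perf_R \to \Mod_{R^{tG}}$ that agree on the unit $R$, and then invoke a thick-subcategory argument.

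First, I would check that both functors in question are exact. The functor $X \mapsto X \otimes_R R^{tG}$, being extension of scalars, preserves all colimits and is certainly exact. The other side, $(-)^{tG}$ restricted to modules with trivial $G$-action, factors as $\Mod_R \to \Mod_R^{BG} \to \Mod_{R^{tG}}$, where the first functor is pre-composition with $BG \to \ast$ (exact), and the second is the cofiber of $\Nm_G\colon (-)_{hG}\to (-)^{hG}$. Since $BG$ is a fixed diagram and $\cC = \Mod_R$ is stable, both $(-)_{hG}$ and $(-)^{hG}$ preserve finite colimits, and so does their cofiber; therefore $(-)^{tG}$ is exact.

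Second, I would verify that $\tilde{\can}^G$ is an equivalence when $X = R$. By construction, $\can_R^G\colon R \to R^{tG}$ is the unit map of a lax symmetric monoidal functor, hence a map of commutative ring spectra, sending $1$ to $1$. Under the free–forgetful adjunction $\Mod_R \leftrightarrows \Mod_{R^{tG}}$, such a map corresponds to the identity $R^{tG} \to R^{tG}$, so $\tilde{\can}^G_R = \id_{R^{tG}}$.

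Third, I would conclude as follows. The subcategory of $X \in \Mod_R$ on which $\tilde{\can}^G$ is an equivalence is closed under finite colimits and retracts, because both source and target of $\tilde{\can}^G$ are exact functors valued in the stable $\infty$-category $\Mod_{R^{tG}}$. By the previous step, this subcategory contains $R$, hence it contains $\Perf_R = \mathrm{Thick}(R) \subseteq \Mod_R$. This gives the desired equivalence $X^{tG} \simeq X \otimes_R R^{tG}$ for all $X \in \Perf_R$; perfectness of $X^{tG}$ over $R^{tG}$ then follows, since extension of scalars preserves perfect modules, and the identification as extension of scalars is immediate from the construction of $\tilde{\can}^G$. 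The only non-formal input is a careful unwinding of the adjunction in step two; everything else is a standard reduction to the unit via exactness.
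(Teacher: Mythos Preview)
Your proposal is correct and follows essentially the same approach as the paper: both argue that $\tilde{\can}^G$ is a natural transformation between exact functors which is an isomorphism at $X=R$, and then invoke the fact that $\Perf_R$ is the thick subcategory generated by $R$. The paper's proof is simply a terser version of yours, asserting the exactness of source and target and the base case $X=R$ without spelling out the details you provide.
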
 
 
\begin{proof}
The source and target of $\tilde{\can}^G$ are exact functors of $X$, and since $\Perf_R$ is generated from $R$ under finite colimits, desuspensions, and retracts, the claim for all perfect $R$-modules follows from the special case $X=R$, which is clear.  
\end{proof}
\subsection{The Tate-Valued Frobenius}
\label{subsec:The Tate-Valued Frobenius}
 The Tate-valued Frobenius is a natural map $\fr^p\colon R \to R^{tC_p}$ defined for commutative ring spectra. We now define the map $\fr^p$ and identify, as for the canonical map, the effect of extending scalars along $\fr^p$.
\subsubsection{Equivariant Powers} 
For the definition of the Tate-valued Frobenius, we need the notion of equivariant powers. 
\begin{defn}
Let $\cC$ be a symmetric monoidal $\infty$-category, so that we have a $C_p$-equivariant $p$-fold tensor product functor $\cC^p \to \cC$. The symmetric monoidal  composition
\[
\cC \oto{\Delta} \cC^{p} \oto{\otimes} \cC,
\]
where $\Delta$ is the $p$-fold diagonal,
is $C_p$-equivariant for the trivial action on the source and target, and hence classifies a symmetric monoidal functor 
\[
\mdef{\Pow^p_\cC} \colon \cC \to \cC^{BC_p}
\]
that we refer to as the $p$-th \tdef{equivariant power} functor. When $\cC$ is clear from the context, we omit it from the notation and write $\Pow^p:= \Pow^p_\cC$. For a commutative ring spectrum $R$, we denote $\Pow^p_R:= \Pow^p_{\Mod_R}$. 
\end{defn}
Informally, $\Pow^p_\cC(X) = X^{\otimes p}$, endowed with the $C_p$-action that permutes the tensor factors. 
\begin{rem}
There is another perspective on $\Pow^p_\cC$ that will be useful later. Namely, the $\infty$-category $\calg(\cat_\infty)$ of symmetric monoidal $\infty$-categories is semiadditive. Consequently, for every $\cC \in \calg(\cat_\infty)$ and map $f\colon A\to B$ of spaces with finite fibers, we have a transfer, or ''integration", map $\int_f \colon \cC^A \to \cC^B$, see, e.g. \cite[Definition 2.1.11]{TeleAmbi}. Unwinding the definitions, the functor $\Pow^p_\cC$ identifies with the transfer $\int_e\colon \cC \to \cC^{BC_p}$ along the base-point inclusion $e\colon \pt \to BC_p$ in the semiadditive $\infty$-category $\calg(\cat_\infty)$.   
\end{rem}  

In the case of stable $\infty$-categories, the functor $\Pow^p$ has controlled behavior regarding cofiber sequences. However, before we formulate and prove it, let us first give a brief digression on filtered and graded objects. 
For $\cC \in \cat^\st$, 
let $\Fil(\cC)$ be the $\infty$-category of ($\ZZ$-)filtered objects in $\cC$, i.e., sequences of the form 
\[
\dots \to X_{-1} \to X_0 \to X_1 \to \dots
\] 
of objects of $\cC$,
and let $\Gr(\cC)$ be the $\infty$-category of $\ZZ$-graded objects, see, e.g., \cite[\S 3.1]{lurie2015rotation}\footnote{The presentation in this paper is for spectra, but the construction works for general stable $\infty$-category.}. When $\cC$ is presentably symmetric monoidal, both these $\infty$-categories admit  symmetric monoidal structures given by Day convolution, and the associated graded functor $\gr\colon \Fil(\cC) \to \Gr(\cC)$ is symmetric monoidal (\cite[Proposition 3.2.1]{lurie2015rotation}). 
For $X\in \cC$, we denote by $X(k)$ the object $X$ of $\Gr(\cC)$, regaded as sitting in degree $k$. 

\begin{prop}
\label{equiv_power_filtration}
Let $\cC \in \calg(\cat^\st)$, and let $X\to Y \to Z$ be a cofiber sequence in $\cC$. Then, 
$
\Pow^p(Y)
$
admits a canonical finite filtration whose associated graded pieces consist of: 
\begin{itemize}
    \item $\Pow^p(X)$. 
    \item $e_! (X^{\otimes |I|}\otimes Y^{\otimes p-|I|})$ for every orbit of $C_p$ in the set of non-empty proper subset $I\subseteq \{1,\dots,p\}$. 
    \item $\Pow^p(Z)$.
\end{itemize}
\end{prop}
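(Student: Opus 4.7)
The plan is to realize the asserted filtration of $\Pow^p(Y)$ as the image under $\Pow^p$ of a natural two-step filtration on $Y$, and then to compute its associated graded using that $\gr$ is symmetric monoidal.

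Concretely, I would regard the cofiber sequence $X\to Y\to Z$ as a filtered object $F_\bullet Y\in\Fil(\cC)$ with $F_{-1}Y = 0$, $F_0Y = X$, and $F_iY = Y$ for $i\ge 1$. Its colimit is $Y$, and its associated graded is $X(0)\oplus Z(1)$ in $\Gr(\cC)$. Since the functor $\Pow^p$ is built entirely from the symmetric monoidal structure of $\cC$, it extends along Day convolution to a functor $\Pow^p\colon \Fil(\cC)\to\Fil(\cC)^{BC_p}$, and the same goes for graded objects. Because $\gr\colon\Fil(\cC)\to\Gr(\cC)$ is symmetric monoidal, it commutes with $\Pow^p$. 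Consequently, $\Pow^p(F_\bullet Y)$ is a finite filtered object in $\cC^{BC_p}$ whose colimit is $\Pow^p(Y)$ and whose associated graded, as a $C_p$-equivariant graded object, is $\Pow^p(X(0)\oplus Z(1))$.

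To compute this associated graded, I would expand via distributivity of tensor product over direct sums: $\Pow^p(X(0)\oplus Z(1))$ decomposes as the coproduct, indexed by subsets $I\subseteq\{1,\dots,p\}$, of the summands $X^{\otimes I}\otimes Z^{\otimes I^c}$, placed in graded degree $p - |I|$. The $C_p$-action permutes the tensor factors, which on the indexing set of subsets is the standard cyclic action. Grouping by $C_p$-orbits, the two fixed subsets $\emptyset$ and $\{1,\dots,p\}$ contribute the summands $\Pow^p(Z)$ in degree $p$ and $\Pow^p(X)$ in degree $0$, respectively, each with its canonical cyclic permutation action. Since $p$ is prime, every non-empty proper subset lies in a free $C_p$-orbit of size $p$; the sum over such an orbit, with the $C_p$-action by translation of the indexing set, is by the pointwise formula for left Kan extension precisely $e_!(X^{\otimes|I|}\otimes Z^{\otimes p-|I|})$, where $e\colon \pt\to BC_p$ is the basepoint inclusion.

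The main obstacle is purely formal: one must verify that the $C_p$-equivariant symmetric monoidal enhancement of the $p$-fold tensor power interacts correctly with the Day convolution on $\Fil(\cC)$ and $\Gr(\cC)$, so that $\gr\circ\Pow^p\simeq\Pow^p\circ\gr$ as functors to $\Gr(\cC)^{BC_p}$. This follows from symmetric monoidality of $\gr$ combined with the interpretation of $\Pow^p$ as the transfer along $e\colon \pt\to BC_p$ in the semiadditive $\infty$-category $\calg(\cat_\infty)$, which is natural in symmetric monoidal functors. Granting this, the orbit decomposition and the recognition of free orbits as $e_!$ are straightforward.
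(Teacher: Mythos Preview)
Your proposal is correct and follows essentially the same approach as the paper: realize the cofiber sequence as a two-step filtered object, apply $\Pow^p$ in $\Fil(\cC)$, and use the symmetric monoidality of $\gr$ to reduce the computation of the associated graded to the split case $\Pow^p(X(0)\oplus Z(1))$ in $\Gr(\cC)$, which you then unwind by the orbit decomposition. The one technicality you glide over, which the paper makes explicit, is the preliminary reduction to the presentable case via the embedding $\cC\hookrightarrow\Ind(\cC)$, needed so that the Day convolution symmetric monoidal structures on $\Fil(\cC)$ and $\Gr(\cC)$ are available.
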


\begin{proof}
First, the general stable case follows from the case where $\cC$ is presentable by replacing $\cC$ with $\Ind(\cC)$, and using the fully faithful symmetric monoidal embedding $\cC \into \Ind(\cC)$. 
The case where the cofiber sequence splits, so that $Y\simeq X\oplus Z$, is explained e.g. in \cite[Lemma 3.4.9]{TeleAmbi}, and moreover in this case the filtration on $\Pow^p(Y)$ splits into a direct sum decomposition 
\begin{equation} \label{equiv_pow_decomposition}
\Pow^p(X\oplus Z)\simeq \Pow^p(X)\oplus \Pow^p(Z) \oplus \bigoplus_I e_!(X^{\otimes |I|}\otimes Z^{\otimes p-|I|}).  
\end{equation}
For the general case, consider the filtered object
\[
F : \dots=0=0\to  X\to Y = Y = \dots \qin \Fil(\cC). 
\] 
in which the unspecified map $X\to Y$ has cofiber $Z$. 
Note that $\Pow^p_{\Fil(\cC)}(F)$ is a filtration on the object $\Pow^p(Y)$. Moreover, since
\[
\gr(F) \simeq X(0)\oplus Z(1),
\]  
and $\gr$ is symmetric monoidal, we have, by the case of a split extension applied to $\Gr(\cC)$:
\[
\gr(\Pow^p_{\Fil(\cC)}(F))\simeq \Pow^p_{\Gr(\cC)}(\gr(F))\simeq \Pow^p(X)(0)\oplus \Pow^p(Z)(p) \oplus \bigoplus_I e_!(X^{\otimes |I|}\otimes Z^{\otimes p-|I|})(|I|).  
\]
Thus, $\Pow^p_{\Fil(\cC)}(F)$ is a filtration on $Y$ with the required associated graded pieces. 
\end{proof}
  
\subsubsection{The Frobenius Twist Functor and the Tate Diagonal}  
Informally, \Cref{equiv_power_filtration} shows that $\Pow^p$ is exact modulo induced local systems. Thus, it becomes exact after applying the Tate construction. This motivates the definition of the following functor. 

\begin{defn}
Let $\cC\in \calg(\Prl^\st)$. We define the functor 
\[
\mdef{\Fr^p_\cC} \colon \cC \to \Mod_{\one^{tC_p}}(\cC)
\]
by the composition 
\[
\cC \oto{\Pow^p_\cC} \cC^{BC_p}\oto{(-)^{tC_p}} \Mod_{\one^{tC_p}}(\cC). 
\] We refer to $\Fr^p_\cC$ as the \tdef{Frobenius twist functor}\footnote{The functor $\Fr_\Sp^p$ is referred to as the \emph{Topological Singer construction} in \cite{rognes2010topological}.} of $\cC$.  
\end{defn}

Note that $\Fr^p_\cC$, being a composition of symmetric monoidal and lax symmetric monoidal functors, is itself lax symmetric monoidal.
As indicated before its definition, the functor $\Fr^p_\cC$ has the following  key additional feature:
\begin{prop} [cf. {{\cite[Proposition III.1.1]{nikolaus2018topological}}}] \label{Frobenius_Exact}
Let $\cC \in \calg(\Prl^\st)$. The lax symmetric monoidal functor $\Fr^p_\cC$ is exact. 
\end{prop}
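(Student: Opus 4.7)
My plan is to deduce exactness of $\Fr^p_\cC$ directly from the filtration on $\Pow^p$ provided by \Cref{equiv_power_filtration}, using the fact that $(-)^{tC_p}$ both is exact and kills induced local systems. Since $\Fr^p_\cC$ visibly preserves the zero object ($0^{\otimes p} = 0$ and $0^{tC_p} = 0$), it suffices to check that $\Fr^p_\cC$ sends cofiber sequences to cofiber sequences.

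Given a cofiber sequence $X \to Y \to Z$ in $\cC$, \Cref{equiv_power_filtration} yields a finite filtration
\[
\Pow^p(X) = L_0 \to L_1 \to \cdots \to L_p = \Pow^p(Y)
\]
in $\cC^{BC_p}$ whose associated graded pieces are $\Pow^p(X)$ in degree $0$, $\Pow^p(Z)$ in degree $p$, and, in each intermediate degree $1 \le i \le p-1$, a finite sum of objects of the form $e_!\bigl(X^{\otimes(p-|I|)} \otimes Z^{\otimes |I|}\bigr)$ indexed by $C_p$-orbits of subsets $I \subseteq \{1,\dots,p\}$ with $|I| = i$. (Here I implicitly use that the cofiber sequence $X \to Y \to Z$ produces a filtered object with graded pieces $X(0)$ and $Z(1)$, and that $\Pow^p$ of this filtered object gives the asserted filtration on $\Pow^p(Y)$, as in the proof of \Cref{equiv_power_filtration}.) Applying the exact functor $(-)^{tC_p}\colon \cC^{BC_p} \to \Mod_{\one^{tC_p}}(\cC)$, which is exact since it is the cofiber of the natural transformation $\Nm_{C_p}\colon (-)_{hC_p} \to (-)^{hC_p}$ between exact functors, produces a filtration
\[
\Fr^p(X) = L_0^{tC_p} \to L_1^{tC_p} \to \cdots \to L_p^{tC_p} = \Fr^p(Y)
\]
whose graded pieces are the Tate constructions of the pieces above.

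By \Cref{Tate_lax}, the Tate construction vanishes on induced local systems, so each intermediate graded piece ($1 \le i \le p-1$) becomes zero and the maps $L_i^{tC_p} \to L_{i+1}^{tC_p}$ in that range are equivalences. Consequently, the cofiber of $\Fr^p(X) = L_0^{tC_p} \to L_p^{tC_p} = \Fr^p(Y)$ is carried by the top graded piece $\Pow^p(Z)^{tC_p} = \Fr^p(Z)$, yielding the desired cofiber sequence $\Fr^p(X) \to \Fr^p(Y) \to \Fr^p(Z)$. Naturality of the filtration in the given cofiber sequence guarantees that the identifications of the first and third terms, and the connecting maps, are the canonical ones.

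The main subtlety I anticipate is verifying that the boundary map in the resulting cofiber sequence truly agrees with $\Fr^p$ applied to the boundary of $X \to Y \to Z$, rather than just that the terms fit into an abstract cofiber sequence. This is bookkeeping with the functoriality of $\Pow^p_{\Fil(\cC)}$ and of $\gr$, and ultimately follows by the universal property of the cofiber together with the fact that all constructions in sight are symmetric monoidal natural in the input cofiber sequence; there is no genuinely hard input beyond \Cref{equiv_power_filtration} and the defining property of $(-)^{tC_p}$ in \Cref{Tate_lax}.
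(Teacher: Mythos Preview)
Your proposal is correct and follows essentially the same approach as the paper: both reduce to preservation of cofiber sequences, invoke the filtration of \Cref{equiv_power_filtration} on $\Pow^p(Y)$, apply the exact functor $(-)^{tC_p}$, and use that Tate kills the induced intermediate pieces to collapse the filtration to the desired cofiber sequence. The paper handles your ``main subtlety'' in the same informal way, by noting that the collapsed filtration is visibly $\Fr^p_\cC(X)\xrightarrow{\Fr^p_\cC(f)}\Fr^p_\cC(Y)$ upon inspection of its construction.
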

 
\begin{proof}
Since $\Fr^p_\cC(0)\simeq 0$, it suffices to check that it preserves cofiber sequences. Let $X\oto{f} Y\oto{g} Z$ be a cofiber sequence. By \Cref{equiv_power_filtration} and the fact that $(-)^{tC_p}$ vanishes on induced local systems, the object $\Fr^p_\cC(Y)$ admits a filtration with associated graded pieces $\Fr^p_\cC(X)$ and $\Fr^p_\cC(Z)$. By inspecting its construction, this filtration is given (up to ''spacing") by 
\[
\dots= 0 = 0 \to \Fr^p_\cC(X) \oto{\Fr^p_\cC(f)} \Fr^p_\cC(Y) = \Fr^p_\cC(Y) = \dots
\]
and hence correspond to a cofiber sequence
\[
\Fr^p_\cC(X)\oto{\Fr^p_\cC(f)} \Fr^p_\cC(Y)\oto{\Fr^p_\cC(g)} \Fr^p_\cC(Z).
\]
Since this sequence is obtained from $X\oto{f} Y \oto{g} Z$ by applying $\Fr_\cC^p$, we deduce that $\Fr_\cC^p$ carries cofiber sequences to cofiber sequences. 
\end{proof}

Specializing in the case $\cC = \Sp$, we now define the Tate diagonal map. 

\begin{defn}[cf. {{\cite[Definition III.1.4]{nikolaus2018topological}}}]
For $\cC = \Sp$, every exact and lax symmetric monoidal functor $\Sp \to \Sp$ receives a unique lax symmetric monoidal natural transformation from $\Id_{\Sp}$. In particular, we obtain such a natural transformation
\[
\mdef{\Delta} \colon \Id_\Sp \to \Fr^p_\Sp,
\]
which we refer to as the \tdef{Tate diagonal}. 
\end{defn}

\subsubsection{The Tate-Valued Frobenius} 
Let $R$ be a commutative ring spectrum. The $p$-fold multiplication map $R^{\otimes p}\to R$ is $C_p$-equivariant and hence can be viewed as a map $ \Pow^p(R)\to R$. Here, the target is endowed with the trivial $C_p$-action. Applying the Tate construction, we obtain a map 
\[
m_R\colon \Fr^p_\Sp(R)\to R^{tC_p}. 
\]

\begin{defn}[cf. {{\cite[Definition IV.1.1]{nikolaus2018topological}}}]
Let $R$ be a commutative ring spectrum.  We define the \tdef{Tate-valued Frobenius} map
\[
\mdef{\fr_R^p}\colon R \to R^{tC_p} 
\]
by the composition 
\[
R \oto{\Delta} \Fr^p_\Sp(R) \oto{m_R}R^{tC_p}. 
\]
\end{defn}

As for the canonical map, we wish to identify the extension of scalars functor $\fr_*^p\colon \Perf_R \to \Perf_{R^{tC_p}}$ from perfect $R$-modules to perfect $R^{tC_p}$-modules. 

Let $\Pow^p_R\colon \Mod_R\to \Mod_R^{BC_p}$ be the equivariant power functor for the $\infty$-category $\Mod_R$. 
The lax monoidality of the forgetful functor $\Mod_R\to \Sp$ supplies a natural transformation 
\[
\Pow^p_\Sp(X)\to \Pow^p_R(X), \quad X\in \Mod_R,
\] 
where we regard the target as valued in $\Sp$ via the forgetful functor. Applying the Tate construction, we obtain a natural transformation 
\[
r\colon \Fr^p_\Sp(X)\to \Fr^p_R(X). 
\]
As in the discussion above \Cref{canonical_map_ext_scalars},
the source of $r$ admits a natural structure of an $\Fr^p_\Sp(R)$-module while the target that of a $\Fr^p_R(R) \simeq R^{tC_p}$-module. The map $r$ intertwine these structures along the ring map $r_R\colon \Fr^p_\Sp(R)\to R^{tC_p}$ and hence it classifies a natural map 
\[
\tilde{r}\colon \Fr^p_{\Sp}(X)\otimes_{\Fr^p_\Sp(R)}R^{tC_p}\to \Fr^p_R(X) \qin \Mod_{R^{tC_p}}.
\]
Similarly, the Tate diagonal map 
\[
\Delta \colon X\to \Fr^p_\Sp(X) 
\]
classifies a map 
\[
\tilde{\Delta} \colon X\otimes_{R} \Fr^p_{\Sp}(R) \to \Fr^p_\Sp(X). 
\]

\begin{prop} \label{extension_of_scalars_diagonal_mult}
Let $R$ be a commutative ring spectrum. The two natural maps $\tilde{r}$ and $\tilde{\Delta}$  introduced above are isomorphisms when evaluated at perfect $R$-modules.  
\end{prop}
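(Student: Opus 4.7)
The proof will follow the same template as \Cref{canonical_map_ext_scalars}: reduce from arbitrary perfect $R$-modules to the case $X=R$ by an exactness/retract argument, and then verify this base case by unwinding the construction of the maps.

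First, I would observe that $\Perf_R$ is the smallest stable subcategory of $\Mod_R$ containing $R$ and closed under retracts, so to check that a natural transformation between exact functors $\Perf_R \to \cD$ (for some stable $\infty$-category $\cD$) is an equivalence, it suffices to check it on $X=R$. Hence the task splits into (a) verifying that both the source and the target of $\tilde\Delta$ and of $\tilde r$ are exact in $X\in \Perf_R$, and (b) computing the two maps at $X=R$.

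For the exactness in (a), both $X\mapsto X\otimes_R \Fr^p_\Sp(R)$ and $X\mapsto \Fr^p_\Sp(X)\otimes_{\Fr^p_\Sp(R)} R^{tC_p}$ are exact: tensoring over a ring spectrum is exact in each variable, and $\Fr^p_\Sp$ restricted to $\Mod_R\subseteq \Sp$ is exact by \Cref{Frobenius_Exact} applied to $\cC=\Sp$. The targets $\Fr^p_\Sp(X)$ and $\Fr^p_R(X)$ are exact in $X$ by \Cref{Frobenius_Exact} applied to $\cC=\Sp$ and $\cC=\Mod_R$ respectively (here $\Fr^p_R(X)$ is evaluated on an $R$-module $X$, but both source and target are viewed through whichever stable structure is convenient since the forgetful functor $\Mod_{R^{tC_p}}\to \Sp$ detects equivalences and is exact).

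For (b), at $X=R$ the map $\tilde\Delta\colon R\otimes_R \Fr^p_\Sp(R)\to \Fr^p_\Sp(R)$ is, by construction of the adjoint transpose, the canonical identification with the identity of $\Fr^p_\Sp(R)$; similarly, $\tilde r\colon \Fr^p_\Sp(R)\otimes_{\Fr^p_\Sp(R)} R^{tC_p}\to \Fr^p_R(R)$ is the identification $R^{tC_p}\simeq \Fr^p_R(R)$ given by the fact that the multiplication $R^{\otimes p}\to R$ realizes $R^{tC_p}$ as $\Fr^p_R(R)$ (using that for the trivial $R$-module structure on $\Pow^p_R(R)$, this is tautological). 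Both are thus equivalences.

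There is no serious obstacle here; the only subtlety worth being careful about is to make sure the various $\infty$-categorical structures agree, most importantly the lax symmetric monoidality of the forgetful functor $\Mod_R\to \Sp$ that produces the natural transformation $r\colon \Fr^p_\Sp(X)\to \Fr^p_R(X)$ and the adjointness used to pass from $\Delta$ and $r$ to $\tilde\Delta$ and $\tilde r$. Once these are in place, the argument is a direct application of \Cref{Frobenius_Exact} plus the standard reduction to the generator $R\in \Perf_R$.
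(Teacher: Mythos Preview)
Your proposal is correct and follows essentially the same approach as the paper: reduce to the generator $R\in\Perf_R$ by exactness of source and target (invoking \Cref{Frobenius_Exact}), then observe that both maps are tautological isomorphisms at $X=R$. The paper's proof is more terse than yours, but the content is identical.
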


\begin{proof}
As in the proof of \Cref{canonical_map_ext_scalars}, in both cases, the source and target of the natural transformations are exact functors of the input module $X$, and therefore the claim for a general perfect module reduces to the case $X=R$, which is clear.   
\end{proof}

We can now identify the Frobenius twist functor for perfect $R$-modules.

\begin{cor}\label{Frobemious_ext_scalars}
Let $R$ be a commutative ring spectrum. The restriction of the functor $\Fr^p_R\colon \Mod_R \to \Mod_{R^{tC_p}}$ to perfect $R$-modules identifies, as a lax symmetric monoidal functor, with the functor of extension of scalars along the commutative ring spectrum map $\fr^p\colon R\to R^{tC_p}$. In particular, it is symmetric monoidal and lands in perfect $R^{tC_p}$-modules.
\end{cor}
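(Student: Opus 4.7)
The plan is to simply assemble the two equivalences provided by \Cref{extension_of_scalars_diagonal_mult} into one chain. Unwinding the Tate-valued Frobenius, the $R$-module structure on $R^{tC_p}$ used in the extension-of-scalars functor $-\otimes_R R^{tC_p}$ is by definition that coming from $\fr^p = m_R\circ \Delta\colon R\to \Fr^p_\Sp(R)\to R^{tC_p}$. In particular, regarding $R^{tC_p}$ as an $\Fr^p_\Sp(R)$-module via $m_R$, the $R$-module structure on it factors as an extension of scalars through $\Fr^p_\Sp(R)$. Therefore, for any $X\in\Perf_R$, we can write
\[
X \otimes_R R^{tC_p} \;\simeq\; \bigl(X\otimes_R \Fr^p_\Sp(R)\bigr) \otimes_{\Fr^p_\Sp(R)} R^{tC_p}.
\]
Now \Cref{extension_of_scalars_diagonal_mult} gives an equivalence $\tilde{\Delta}\colon X\otimes_R \Fr^p_\Sp(R) \iso \Fr^p_\Sp(X)$, and subsequently $\tilde{r}\colon \Fr^p_\Sp(X)\otimes_{\Fr^p_\Sp(R)} R^{tC_p}\iso \Fr^p_R(X)$. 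Composing, we obtain a natural equivalence $X\otimes_R R^{tC_p}\iso \Fr^p_R(X)$ on $\Perf_R$.

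The next step is to upgrade this to an equivalence of lax symmetric monoidal functors. Both $\tilde{\Delta}$ and $\tilde{r}$ arise as adjoints of the lax symmetric monoidal natural transformations $\Delta\colon \Id_\Sp\to \Fr^p_\Sp$ and $r\colon \Fr^p_\Sp\to \Fr^p_R$, and relative tensor products of lax symmetric monoidal maps are again lax symmetric monoidal; hence the composite equivalence above is one of lax symmetric monoidal functors $\Perf_R \to \Mod_{R^{tC_p}}$. Since the source functor $-\otimes_R R^{tC_p}$ is strong symmetric monoidal (as extension of scalars along any map of commutative ring spectra), the same must hold for $\Fr^p_R|_{\Perf_R}$.

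Finally, perfectness of the target is automatic: extension of scalars along $\fr^p\colon R\to R^{tC_p}$ sends $\Perf_R$ into $\Perf_{R^{tC_p}}$, since it preserves compactness (being a left adjoint with colimit-preserving right adjoint), and dualizability is preserved by any symmetric monoidal functor. I do not expect any real obstacle here; the content of the corollary is essentially a bookkeeping consequence of \Cref{extension_of_scalars_diagonal_mult}, with the only point requiring care being the verification that the $R$-module structure on $R^{tC_p}$ coming from $\fr^p$ factors, as above, through $\Fr^p_\Sp(R)$, so that the two base-changes can be composed.
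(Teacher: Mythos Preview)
Your proposal is correct and follows essentially the same approach as the paper: both arguments chain together the two isomorphisms $\tilde{\Delta}$ and $\tilde{r}$ from \Cref{extension_of_scalars_diagonal_mult}, observing that the intermediate $R$-algebra structure on $\Fr^p_\Sp(R)$ comes from $\Delta$ and the $\Fr^p_\Sp(R)$-algebra structure on $R^{tC_p}$ from $m_R$, so that the composite base change is along $\fr^p = m_R\circ\Delta$. The paper states this chain more tersely and in the opposite direction, but the content is identical; your additional remarks on lax symmetric monoidality and on perfectness of the target simply spell out what the paper leaves implicit.
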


\begin{proof}
By \Cref{extension_of_scalars_diagonal_mult}, we have, naturally (and lax symemtric monoidally) in $X\in \Perf_R$: 
\[
\Fr^p_R(X)\simeq \Fr^p_\Sp(X)\otimes_{\Fr^p_\Sp(R)} R^{tC_p} \simeq X\otimes_{R}\Fr^p_{\Sp}(R) \otimes_{\Fr^p_\Sp(R)} R^{tC_p} \simeq X\otimes_R R^{tC_p}.
\]
Here, $\Fr^p_\Sp(R)$ is regarded as an $R$-algebra via the Tate-diagonal $\Delta$ and $R^{tC_p}$ as an $\Fr^p_{\Sp}(R)$-algenra via the multiplication map $m_R$. Hence, $\Fr^p_R$ is given by extension of scalars along the composition $\fr^p_R= m_R \circ \Delta$. 
\end{proof}

\section{Strict Picard Spectra} 
 In this section, we discuss the main object of study in this paper: strict Picard spectra. We start in \Cref{subsec:The Picard Spectrum} by reviewing the construction and basic properties of the Picard spectrum of symmetric monoidal $\infty$-categories and relate it to the classical Picard group in the case of modules over a ring. Then, in \Cref{subsec:Strict Elements} we define the connective $\ZZ$-module of ''strict elements" in any connective spectrum $X$ as the connective spectrum of maps $\ZZ\to X$. We compute the strict elements in $1$-truncated connective spectra, and show that the formation of strict elements trivializes the transfer maps associated with arbitrary finite groups. 
 
 Finally, in \Cref{subsec:The Strict Picard Spectrum} we introduce the strict Picard spectrum of a symmetric monoidal $\infty$-category and show that the equivariant $\otimes$-power functor of the category degenerates to a usual $p$-th tensor power map when restricted to the strict Picard spectrum.  
\subsection{The Picard Spectrum}
\label{subsec:The Picard Spectrum}
Let $\CMon(\Spc)$ be the $\infty$-category of commutative monoids.
If $\cC$ is a symmetric monoidal $\infty$-category, the space of objects $\cC^\simeq$ admits a commutative monoid structure via the tensor product, so that $\cC^\simeq\in \CMon(\Spc)$.   
Recall that the $\infty$-category $\Sp^\cn$ of connective spectra is equivalent to the full subcategory of $\CMon(\Spc)$ spanned by the \emph{grouplike} commutative monoids, i.e., those commutative monoids $M$ for which the monoid $\pi_0(M)$ is an abelian group. 
The fully faithful embedding 
\[
\Sp^\cn\into \CMon(\Spc)
\] 
admits a right adjoint, given by the formation of the \emph{maximal grouplike submonoid}. 
\begin{defn}
Let $\cC$ be a symmetric monoidal $\infty$-category. 
\begin{itemize}
    \item We denote by $\mdef{\pic(\cC)}\in \Sp^\cn$ the maximal grouplike submonoid of $\cC^\simeq$, and refer to $\pic(\cC)$ as the \tdef{Picard spectrum} of $\cC$.
    \item We denote by $\mdef{\Pic(\cC)}:=\pi_0\pic(\cC)$ the \tdef{Picard group} of $\cC$. \footnote{It is also common in the literature to denote by $\Pic(\cC)$ the $\infty$-loop space associated with the connective spectrum $\pic(\cC)$. We implicitly identify connective spectra with their corresponding $\infty$-loop spaces and reserve the notation $\Pic(\cC)$ for the group of components.}
    \item For a commutative ring spectrum $R$, we denote $\pic(R):= \pic(\Mod_R)$. 
\end{itemize} 
 
\end{defn}
Hence, $\pic(\cC)$ is the connective spectrum of  $\otimes$-invertible objects in $\cC$, with (commutative) multiplication given by the tensor product of $\cC$. 
In particular, we have a natural isomorphism 
\begin{equation} \label{eq:loops_pic_GL1}
    \Omega \pic(\cC) \simeq \one_\cC^\times,
\end{equation}
where $\one_\cC^\times$ is the spectrum of \tdef{units} of $\one_\cC$, see, e.g. \cite[\S 3.1]{carmeli2021chromatic}. In particular,
\begin{equation}\label{eq:pi_1_pic}
\pi_1\pic(\cC) \simeq (\pi_0\one_\cC)^\times. 
\end{equation}
Consequently, the element $\eta\in \pi_1\Sph$ gives a map $\Pic(\cC) \to (\pi_0\one_\cC)^\times$ for every $\cC \in \calg(\cat_\infty)$. 

\begin{prop}[cf. {{\cite[Proposition 3.20]{carmeli2021chromatic}}}]
\label{dim_eta}
Let $\cC$ be a symmetric monoidal $\infty$-category.
The map $\eta\colon \Pic(\cC)\to (\pi_0\one_\cC)^\times$ is given by 
\[
\eta\cdot[\mathcal{L}] = \dim(\mathcal{L}).
\]
Here, $\dim$ denote the symmetric monoidal dimension (a.k.a. ''Euler Characteristic"). 
\end{prop}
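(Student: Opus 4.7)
The plan is to unwind what the two sides mean concretely and compare them. Recall that an invertible $\mathcal{L}\in\Pic(\cC)$ is the same data as a map of spectra $f_\mathcal{L}\colon \Sph\to\pic(\cC)$, and multiplication by $\eta\in\pi_1\Sph$ sends $[\mathcal{L}]\in\pi_0\pic(\cC)$ to the composite
\[
\eta\cdot[\mathcal{L}]\colon\Sigma\Sph\xrightarrow{\eta}\Sph\xrightarrow{f_\mathcal{L}}\pic(\cC),
\]
viewed as an element of $\pi_1\pic(\cC)\simeq(\pi_0\one_\cC)^\times$ using the isomorphism \Cref{eq:pi_1_pic}. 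On the other hand, under \Cref{eq:loops_pic_GL1}, an element of $\pi_1\pic(\cC)$ is an automorphism of $\one_\cC$, and $\dim(\mathcal{L})$ is precisely such an automorphism, defined as the composition $\one_\cC\to \mathcal{L}\otimes\mathcal{L}^\vee\xrightarrow{\tau}\mathcal{L}^\vee\otimes\mathcal{L}\to\one_\cC$, where $\tau$ is the symmetry isomorphism. So the content of the proposition is an identification of these two automorphisms of $\one_\cC$.

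The first step is to reduce to a universal calculation. The assignment $\cC\mapsto \pic(\cC)$ is a functor from $\calg(\cat_\infty)$ to $\Sp^\cn$, and both $\eta\cdot[\mathcal{L}]$ and $\dim(\mathcal{L})$ are natural with respect to symmetric monoidal functors. Since any $\mathcal{L}\in\Pic(\cC)$ induces a symmetric monoidal functor out of the free presentably symmetric monoidal stable $\infty$-category on one invertible object, it suffices to treat the universal example $\cC=\Sp$ with $\mathcal{L}=\Sigma\Sph$. There we must verify $\eta\cdot[\Sigma\Sph]=-1\in\{\pm 1\}=(\pi_0\Sph)^\times$, which matches $\dim(\Sigma\Sph)=-1$.

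The universal computation can be carried out in two equivalent ways. One option is to identify the first $k$-invariant of $\pic(\Sp)$: the Postnikov section $\tau_{\le 1}\pic(\Sp)$ sits in a fiber sequence $\Sigma(\Sph^\times)\to\tau_{\le 1}\pic(\Sp)\to\ZZ$, whose classifying map $\ZZ\to\Sigma^2(\Sph^\times)$ restricts to a map $\ZZ/2\simeq \pi_1\Sph\cdot\ZZ\to\pi_0\Sph^\times$ that sends $\eta$ to $-1$; this is precisely the assertion that $\Sigma\Sph$ has dimension $-1$. Alternatively, one uses that $\eta$ is detected by the transfer $\Sigma^\infty B\Sigma_2^+\to\Sph$ applied to the generator of $\pi_1 B\Sigma_2$, and that the induced map $\pi_1\Sph\to\pi_1\pic(\cC)$ sends this class to the braiding automorphism of $\mathcal{L}\otimes\mathcal{L}^\vee$, i.e.\ to $\dim(\mathcal{L})$.

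The main obstacle is to make the naturality reduction precise — specifically, to argue that the spectrum map $f_\mathcal{L}\colon\Sph\to\pic(\cC)$ behaves functorially enough under the chosen ``universal'' symmetric monoidal source to transport the identity from $\cC=\Sp$ to an arbitrary $\cC$. Once this is granted, both the $k$-invariant computation and the identification of $\eta$ with a $\Sigma_2$-transfer class are standard facts about the stable homotopy of the sphere, and the equality $\eta\cdot[\mathcal{L}]=\dim(\mathcal{L})$ follows.
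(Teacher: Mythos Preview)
The paper does not give its own proof of this proposition; it simply cites \cite[Proposition 3.20]{carmeli2021chromatic}. So there is no in-paper argument to compare against, and your proposal is an attempt to supply a proof the paper outsources.

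Your overall strategy---reduce by naturality to a universal case and compute there---is the right shape, but the reduction step has a real gap, not just a missing detail. You claim that any invertible $\mathcal{L}\in\Pic(\cC)$ yields a symmetric monoidal functor out of ``the free presentably symmetric monoidal stable $\infty$-category on one invertible object'', and that this free object is $\Sp$ with generator $\Sigma\Sph$. Neither assertion holds. First, the proposition is stated for an \emph{arbitrary} symmetric monoidal $\infty$-category $\cC$, which need not be stable or presentable, so there is no reason for it to receive a symmetric monoidal functor from any presentable stable category. Second, even within the presentable stable world, $\Sp$ is free on its \emph{unit}; the free one on a non-unit invertible object is $\ZZ$-graded spectra, not $\Sp$.

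The correct universal object is much smaller: the free symmetric monoidal $\infty$-groupoid on one invertible object, which is $\Omega^\infty\Sph$ with its $\EE_\infty$-structure. An invertible $\mathcal{L}$ is precisely a symmetric monoidal functor $\Omega^\infty\Sph\to\Omega^\infty\pic(\cC)\subseteq\cC^\simeq$, and both $\dim$ and multiplication by $\eta$ are natural for such functors. The universal computation then reads: in $\Omega^\infty\Sph$, the dimension of the generating invertible object is the nontrivial element of $\pi_0\Aut(\one)=\pi_1\Sph$, i.e.\ $\eta$ itself. Your second alternative---identifying $\eta$ via the $\Sigma_2$-transfer and relating it to the symmetry on $\mathcal{L}\otimes\mathcal{L}$---is essentially this computation and is the right way to finish; once you replace the erroneous universal target $\Sp$ by $\Omega^\infty\Sph$, that part of your sketch goes through.
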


Given this result, the following subgroup of $\Pic(\cC)$ will play a role in our discussion of the strict Picard spectrum: 

\begin{defn}
Let $\cC$ be a symmetric monoidal $\infty$-category. We let $\Pic^0(\cC)$ be the subgroup of $\Pic(\cC)$ consisting of elements $[\mathcal{L}]\in \Pic(\cC)$ for which $\dim(\mathcal{L})=1$. 
\end{defn}
\subsubsection{Module Categories}
Already for an ordinary ring  $R\in \calg(\Ab)$ the spectrum $\pic(R)$ is interesting. It is closely related to the following more classical invariants. Let $\Mod_R^\heartsuit$ be the abelian category of ordinary $R$-modules.
\begin{defn}
For a ring $R$, we let 
\[
\mdef{\Cl(R)}:= \Pic(\Mod_R^\heartsuit). 
\]
We refer to $\Cl(R)$ as the \tdef{classical Picard group} of $R$.\footnote{This group is more commonly called just the Picard group of $R$ in the algebraic geometry literature. We use the term ''classical Picard group" to avoid possible confusion with the Picard spectrum of the category of $R$-module spectra.}
\end{defn}
Thus, $\Cl(R)$  consists of isomorphism classes of invertible discrete $R$-modules.

\begin{defn}
For a ring $R$, let $C^0(R;\ZZ)$ be the group of locally constant functions $\Spec(R)\to \ZZ$. 
\end{defn}
We can expand an element $f\in C^0(R;\ZZ)$ as a sum 
  \[f=
  \sum_{i=1}^k n_i \cdot \delta_{\varepsilon_i},
  \]
  where $\{n_1,\dots,n_k\}= \mathrm{Im}(f)$,  $\varepsilon_i$ is an idempotent with support $f^{-1}(n_i)$, and $\delta_{\varepsilon_i}$ is the function which is $1$ on the support of $\varepsilon_i$ and $0$ outside of it.      
\begin{prop} 
\label{Pic_ordinary_ring}
Let $R$ be a commutative ring, viewed as a discrete commutative ring spectrum. Then 
\[
\Pic(R)\simeq \Cl(R)\oplus C^0(R;\ZZ),
\]
where the isomorphism is given by
\[
(\sum_i n_i\cdot \delta_{\varepsilon_i},\mathcal{L})\mapsto \prod_{i} \Sigma^{n_i} \mathcal{L}[\varepsilon_i^{-1}] \qin \Pic(R).  
\]
Moreover, via this identification, the multiplication by $\eta\in \pi_1\Sph$ on $\Pic(R)$ is given by: 
\[
\eta \cdot (\sum_i n_i\cdot \delta_{\varepsilon_i},\mathcal{L}) = \sum_i(-1)^{n_i} \varepsilon_i \qin R^\times.
\] 

\end{prop}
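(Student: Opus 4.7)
The plan is to reduce an arbitrary invertible $R$-module to the classical situation by peeling off the ``shift data'' as a locally constant function on $\Spec R$. Let $M$ be invertible. Since $M$ is dualizable, it is perfect, and perfect complexes over a discrete commutative ring are quasi-isomorphic to bounded complexes of finitely generated projectives; in particular $\pi_*(M)$ lives in finitely many degrees and each $\pi_n(M)$ is a finitely generated $R$-module with closed support $Z_n\subseteq\Spec R$. For every prime $\mathfrak{p}$, the localization $M_\mathfrak{p}$ is invertible over the local ring $R_\mathfrak{p}$, and invertible modules over a local ring are shifts of the unit; hence exactly one $\pi_n(M)_\mathfrak{p}$ is non-zero. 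Thus the $Z_n$'s are a finite disjoint closed cover of $\Spec R$, so each $Z_n$ is clopen and corresponds to an orthogonal idempotent $\varepsilon_n\in R$ with $1=\sum_n \varepsilon_n$.

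Having established this decomposition, the module $M[\varepsilon_n^{-1}]$ is invertible over $R[\varepsilon_n^{-1}]$ and concentrated in degree $n$, so $M[\varepsilon_n^{-1}]\simeq \Sigma^{n}\mathcal{L}_n$ for a classical invertible module $\mathcal{L}_n\in \Cl(R[\varepsilon_n^{-1}])$. Extending the $\mathcal{L}_n$'s by zero on the complementary idempotents assembles them into a single element $\mathcal{L}\in\Cl(R)$ with $\mathcal{L}[\varepsilon_n^{-1}]\simeq \mathcal{L}_n$, and the locally constant function $f=\sum_n n\cdot \delta_{\varepsilon_n}$ records the shift degrees. Uniqueness of the decomposition (the $\varepsilon_n$ are forced by the supports of $\pi_*(M)$) shows that the proposed map is a bijection, and since tensor products intersect supports and add shift degrees, it is a group homomorphism.

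For the $\eta$-action, we invoke \Cref{dim_eta}: $\eta\cdot [M]=\dim(M)\in R^\times$. Dimension is additive on direct sums, satisfies $\dim(\Sigma X)=-\dim(X)$, and equals $1$ on any classical invertible module (which is locally of rank one). Decomposing $M$ via the idempotents $\varepsilon_i$ gives $\dim(M)=\sum_i (-1)^{n_i}\varepsilon_i\in R^\times$, as claimed.

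The main obstacle is the first paragraph: establishing that the ``pointwise shift'' $\mathfrak{p}\mapsto n(\mathfrak{p})$ is locally constant on $\Spec R$, and upgrading this to an honest splitting $R\simeq \prod_i R[\varepsilon_i^{-1}]$. The key observations that make it go through are perfection (bounded homotopy, finitely generated $\pi_n$) together with the elementary topological fact that a finite disjoint closed cover is automatically clopen; once these are in hand the rest of the argument is formal.
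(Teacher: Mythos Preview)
Your argument is correct in outline and, for the $\eta$-computation, essentially identical to the paper's: both reduce via the idempotent decomposition to $\dim(\Sigma^n\mathcal{L})=(-1)^n$ for $\mathcal{L}\in\Cl(R)$ and then invoke \Cref{dim_eta}. For the identification $\Pic(R)\simeq \Cl(R)\oplus C^0(R;\ZZ)$ the paper does not give an argument at all; it simply cites \cite[Theorem 3.5]{fausk2003picard}. What you have written is a sketch of Fausk's proof, so in that sense you are supplying strictly more than the paper does.

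There is one genuine soft spot in your sketch. You assert that each $\pi_n(M)$ is a finitely generated $R$-module and conclude that its support $Z_n$ is closed. Over a non-Noetherian ring this finite generation is not automatic for the homology of a bounded complex of finitely generated projectives (for instance, over $R=k[x_1,x_2,\ldots]/(x_ix_j:i\neq j)$ the two-term perfect complex $R\xrightarrow{x_1}R$ has $H_1=(x_2,x_3,\ldots)$, which is not finitely generated). For an \emph{invertible} $M$ the finite generation of $\pi_n(M)$ is in fact true, but it is a \emph{consequence} of the clopen decomposition you are trying to establish, so invoking it here is circular. The fix is to argue openness of the $Z_n$ directly: for a perfect complex the locus of primes $\mathfrak{p}$ where $M_{\mathfrak{p}}$ has Tor-amplitude in $[n,n]$ is open, and for invertible $M$ this locus is exactly $Z_n$; since the finitely many $Z_n$ partition $\Spec R$, each is then clopen. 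With that adjustment your proof goes through.
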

\begin{proof}
The identification of $\Pic(R)$ is given in \cite[Theorem 3.5]{fausk2003picard}. 
It remains to identify the multiplication by $\eta$. Fix an element $\prod_i \Sigma^{n_i}\mathcal{L}[\varepsilon_i^{-1}]\in \Pic(R)$. Then $R\simeq \prod_i R[\varepsilon_i^{-1}]$ and hence $\pic(R)\simeq \prod_i \pic(R[\varepsilon_i^{-1}])$. Restricting to the $i$-th coordinate and replacing $R$ by $R[\varepsilon_i^{-1}]$, we reduce the computation to that of $\eta\cdot [\Sigma^nL]$ for $\mathcal{L}\in \Cl(R)$. The result now follows from \Cref{dim_eta}. 
Indeed, since objects of $\Cl(R)$ are of dimension $1$, we get 
\[
\dim(\Sigma^n\mathcal{L})=(-1)^n \dim(\mathcal{L}) = (-1)^n
\] 
in this case.  
\end{proof}

\subsection{Strict Elements}
\label{subsec:Strict Elements}
Let $\Mod_\ZZ^\cn$ be the $\infty$-category of connective $\ZZ$-module spectra. The forgetful functor 
\[
\Mod_\ZZ^\cn \to \Sp^\cn
\]
admits a left adjoint, $\ZZ\otimes (-)\colon \Sp^\cn \to \Mod_\ZZ^\cn$, and a right adjoint, $\hom_{\Sp^\cn}(\ZZ,-)\colon \Sp^\cn \to \Mod_\ZZ^\cn$. We shall mostly consider the right adjoint. 
\begin{defn}
\label{def:strict_elements}
We denote the functor $\hom_{\Sp^\cn}(\ZZ,-)$ by
\[
X\mapsto \mdef{\zlin{X}}.
\]
We shell refer to $\zlin{X}$ as the spectrum of \tdef{strict elements} in $X$.
\end{defn}

In general, computing the homotopy groups of $X_\ZZ$ is a challenging task, closely related to resolving $\ZZ$ by free $\Sph$-modules. However, for $X$ sufficiently truncated, $X_\ZZ$ can be computed explicitly. We shall need the following very special case.
\begin{prop}[cf. {{\cite[Proposition 3.23]{carmeli2021chromatic}}}]
\label{strict_elements_1_truncated}
Let $\eta\in \pi_1\Sph$ be the Hopf element and let $X$ be a connective, $1$-truncated spectrum. Then
\[
\pi_i\zlin{X}= 
\begin{cases}
\Ker(\pi_0X \oto{\eta} \pi_1X) & i=0 \\
\pi_1X & i=1 \\ 
0      & i\ge 2.
\end{cases}
\]
Equivalently, 
\[
\zlin{X}\simeq \Ker(\pi_0X\oto{\eta}\pi_1X) \oplus \Sigma \pi_1X. 
\]
\end{prop}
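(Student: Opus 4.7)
The plan is to compute each homotopy group $\pi_i\zlin{X} \cong [\Sigma^i \ZZ, X]_{\Sp}$ directly and then exploit the $\ZZ$-module structure on $\zlin{X}$ (coming from the defining adjunction) to split the resulting Postnikov extension.

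For $i \ge 2$, $t$-structure orthogonality between the $i$-connective source $\Sigma^i\ZZ$ and the $1$-truncated target $X$ forces $\pi_i\zlin{X} = 0$. For $i = 1$, I would apply $[\Sigma\ZZ, -]$ to the Postnikov fiber sequence $\Sigma\pi_1 X \to X \to \pi_0 X$ and note that both $[\Sigma\ZZ, \pi_0 X]$ and $[\Sigma^2\ZZ, \pi_0 X]$ vanish (the sources are at least $1$-connective, the target is $0$-truncated), reducing the computation to $[\Sigma\ZZ, \Sigma\pi_1 X] \cong [\ZZ, \pi_1 X] \cong \pi_1 X$.

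The main case is $i = 0$. Let $C(\eta)$ denote the cofiber of $\eta\colon\Sigma\Sph \to \Sph$. The natural map $C(\eta) \to \tau_{\le 1} C(\eta) \simeq \ZZ$ is a $\tau_{\le 1}$-equivalence, so $[\ZZ, X] \cong [C(\eta), X]$ whenever $X$ is $1$-truncated. Applying $[-, X]$ to the cofiber sequence $\Sigma\Sph \xrightarrow{\eta} \Sph \to C(\eta)$ and using that $\pi_{-1}X = 0$ by connectivity of $X$, the long exact sequence collapses to
\[
0 \to [C(\eta), X] \to \pi_0 X \xrightarrow{\eta\cdot} \pi_1 X,
\]
identifying $\pi_0\zlin{X}$ with $\Ker\bigl(\pi_0 X \xrightarrow{\eta} \pi_1 X\bigr)$, since the map $\eta^* \colon [\Sph, X] \to [\Sigma\Sph, X]$ is precisely multiplication by $\eta$.

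To deduce the splitting, note that $\zlin{X}$ is a connective $\ZZ$-module concentrated in degrees $0$ and $1$, so its Postnikov tower in $\Mod_\ZZ$ is classified by a $k$-invariant in $\mathrm{Ext}^2_{\ZZ}(\Ker(\eta), \pi_1 X)$. This group vanishes because $\ZZ$ has global dimension one, giving the desired decomposition. The subtlest point of the argument, in my view, is that one must work in $\Mod_\ZZ$ rather than in $\Sp$ before invoking this vanishing: the Postnikov $k$-invariant of $X$ itself in $\Sp$ need not be trivial (already $\tau_{\le 1}\Sph$ demonstrates this), so the splitting is a genuine phenomenon of the strict elements, not of $X$.
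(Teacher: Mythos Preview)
Your argument is correct and follows essentially the same route as the paper: both replace $\ZZ$ by $\Sph/\eta$ via the identification $\tau_{\le 1}(\Sph/\eta)\simeq \ZZ$, read off the homotopy groups from the resulting cofiber sequence, and invoke the $\ZZ$-module structure on $\zlin{X}$ for the splitting. Two small remarks: the $0$ on the left of your exact sequence comes from $\pi_2 X=0$ (by $1$-truncatedness), not from $\pi_{-1}X=0$; and the paper is marginally more economical in that it obtains all $\pi_i$ at once from the single fiber sequence $X_\ZZ \to X \xrightarrow{\eta} \Omega X$, rather than handling $i\ge 1$ by separate arguments.
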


\begin{proof}
Let $\Sph/\eta$ be the cofiber of the morphism $\Sigma \Sph\oto{\eta} \Sph$. Since
 $\tau_{\le 1}\Sph/\eta \simeq \ZZ$ and $X$ is $1$-truncated, we deduce that 
 \[
 \zlin{X}=\hom_{\Sp^\cn}(\ZZ,X)\simeq \hom_{\Sp^\cn}(\Sph/\eta,X). 
 \]
Applying $\hom_{\Sp}(-,X)$ to the cofiber sequence 
\[
 \xymatrix{
 \Sigma \Sph \ar^\eta[r] & \Sph \ar[r] & \Sph/\eta
 }
 \]
We obtain a cofiber sequence  
 \[
 \xymatrix{
 X_\ZZ \ar[r] & X \ar^{\eta}[r] & \Omega X
 }
 \]
 in $\Sp$. 
 The result now follows from the associated long exact sequence of homotopy groups (note that $\hom_{\Sp^\cn} = \tau_{\ge 0}\hom_\Sp$). The ''equivalently" part now follows from the fact that $\ZZ$-module spectra decompose into a product of  Eilenberg-MacLane spectra.
\end{proof}
\subsubsection{Transfer Maps}
Let $G$ be a finite group. We have two maps 
\[
\pi\colon BG\to \pt
\]
and 
\[
e\colon \pt\to BG,
\]
which contract $BG$ to a point and include its basepoint respectively. 
As a result, for an $\infty$-category $\cC$ which admits homotopy fixed points for finite group actions, and for $X\in \cC$, we have maps 
\[
\pi^*\colon X\to X^{BG}  
\]
and
\[
e^*\colon X^{BG}\to X, 
\]
given, informally, by the formation of constant maps to $X$ and evaluation at the basepoint.

In fact, since $e$ has finite discrete fibers, if $\cC$ is semiadditive, there is also a natural transfer map
\[
\push_e \colon X\to X^{BG},
\]
(see, e.g., \cite[Notation 3.1.8]{TeleAmbi} for a more general construction).
Transfer maps are particularly simple in the $\infty$-category of $\ZZ$-module spectra. 
\begin{prop} \label{triv_tras_ZZ}
Let $G$ be a finite group and let $\pi\colon BG \to \pt$ and $e\colon \pt \to BG$ be the projection to the point and inclusion of the base point, respectively. The diagram 
\begin{equation}
\label{eq:triv_tras_ZZ_triangle}
     \xymatrix{
\Id_{\Mod_\ZZ} \ar^{\push_e}[r]\ar_{|G|}[d] & \Id_{\Mod_\ZZ}^{BG} \\ 
\Id_{\Mod_\ZZ} \ar_{\pi^*}[ru]  & 
}
\end{equation}
commutes in $\End(\Mod_\ZZ)$. In particular, for every $\ZZ$-module spectrum $X$, the triangle  
\[
\xymatrix{
X\ar_{|G|}[d] \ar^{\int_e}[r] & X^{BG} \\ 
X \ar_{\pi^*}[ru] 
}
\]
commutes.
\end{prop}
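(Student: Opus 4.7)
My plan is to classify natural transformations $\Id_{\Mod_\ZZ}\to \Id_{\Mod_\ZZ}^{BG}$ in $\End(\Mod_\ZZ)$ via the enriched Yoneda lemma, and then identify the coefficient attached to $\int_e$. Writing $\ZZ[BG]:=\Sigma^\infty_+ BG\otimes \ZZ$, the identification $X^{BG}\simeq \hom_{\Mod_\ZZ}(\ZZ[BG],X)$ shows that both functors in question are corepresented in the $\Mod_\ZZ$-enriched $\infty$-category $\Mod_\ZZ$: namely $\Id_{\Mod_\ZZ}$ by $\ZZ$ and $\Id_{\Mod_\ZZ}^{BG}$ by $\ZZ[BG]$. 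The enriched Yoneda lemma then gives that the spectrum of natural transformations between them is $\hom_{\Mod_\ZZ}(\ZZ[BG],\ZZ)$, whose $\pi_0$ equals $H^0(BG;\ZZ)=\ZZ$ by connectedness of $BG$. Under this identification, $\pi^*$ corresponds to the augmentation $\ZZ[BG]\to\ZZ$, which is the generator $1\in\ZZ$. Consequently both $\int_e$ and $|G|\cdot \pi^*$ are classified by integers, and the claim reduces to showing that the integer $n$ associated to $\int_e$ equals $|G|$.

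To pin down $n$, I will compose with the basepoint restriction $e^*\colon \Id_{\Mod_\ZZ}^{BG}\to \Id_{\Mod_\ZZ}$. Since $\pi\circ e=\id_{\pt}$, one has $e^*\circ\pi^*=\id_{\Mod_\ZZ}$, so that $e^*\circ (n\cdot\pi^*)=n\cdot\id_{\Mod_\ZZ}$. On the other hand, base change for semi-additive integration applied to the pullback square of $e\colon\pt\to BG$ along itself identifies $e^*\circ\int_e$ with the integration along the fiber product $\pt\times_{BG}\pt\simeq G$ down to the point. Since $G$ is a finite discrete space, this latter integration in the stable (in particular semi-additive) $\infty$-category $\Mod_\ZZ$ is the fold map from $|G|$ copies of $\ZZ$, i.e., $|G|\cdot\id_{\Mod_\ZZ}$. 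Comparing the two descriptions of $e^*\circ\int_e$ yields $n=|G|$, so $\int_e=|G|\cdot\pi^*$ in $\End(\Mod_\ZZ)$; the ``In particular'' claim follows by evaluation at any $X\in\Mod_\ZZ$.

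The main nontrivial input is the base-change identity $e^*\circ\int_e=|G|\cdot\id_{\Mod_\ZZ}$, which is a Beck--Chevalley-type property for the semi-additive integration map $\int$; this is a general feature of the formalism of integration in semi-additive $\infty$-categories as developed in \cite{TeleAmbi}. Granted this, the remainder of the argument is a formal consequence of enriched Yoneda and the observation that the zeroth cohomology of the connected space $BG$ with integer coefficients is $\ZZ$.
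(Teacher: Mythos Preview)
Your proof is correct and follows essentially the same approach as the paper: both reduce via (enriched) Yoneda to the observation that $\pi_0\hom_{\Mod_\ZZ}(\ZZ[BG],\ZZ)\simeq H^0(BG;\ZZ)\simeq\ZZ$, and then pin down the relevant integer by computing $e^*\circ\int_e$ (equivalently, on corepresenting objects, $\tr_G\circ e_*$) to be $|G|$. The only cosmetic difference is that the paper works directly with the corepresenting maps $\ZZ[BG]\to\ZZ$ and precomposes with $e_*$, whereas you phrase the same identity as a Beck--Chevalley base-change for semi-additive integration at the level of functors.
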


\begin{proof}
The $\infty$-category $\Mod_\ZZ$ is closed symmetric monoidal, so we have an (internal) co-Yoneda functor $\hom\colon \Mod_\ZZ^\op \to \End(\Mod_\ZZ)$, taking $X\in \Mod_\ZZ^\op$ to the functor $Y\mapsto \hom(X,Y)$. The vertices and edges of the claimed diagram (\ref{eq:triv_tras_ZZ_triangle}) are in the image of this functor. More precisely, we have: 
\begin{itemize}
    \item $\Id_{\Mod_\ZZ} \simeq \hom(\ZZ,-)$. 
    \item $\Id_{\Mod_\ZZ^{BG}} \simeq \hom(\ZZ[BG],-)$. 
    \item The map $|G|\colon \Id_{{\Mod_\ZZ}} \to \Id_{{\Mod_\ZZ}}$ is obtained from $|G|\colon \ZZ \to \ZZ$ by applying $\hom$. 
    \item The map $\pi^*\colon \Id_{{\Mod_\ZZ}}\to \Id_{{\Mod_\ZZ}}^{BG}$ is obtained from the map $\pi_*\colon \ZZ[BG]\to \ZZ$ induced from $\pi$ on $\ZZ$-valued chains by applying $\hom$. 
    \item The map $\push_e$ is obtained from the transfer map in $\ZZ$-valued chains $\tr_G \colon \ZZ[BG] \to \ZZ$ by applying $\hom$.   
\end{itemize}

By the functoriality of $\hom$, it remains to show that the triangle
\[
\xymatrix{
\ZZ[BG] \ar^\pi[r] \ar_{\tr_G}[rd] & \ZZ\ar^{|G|}[d] \\ 
          & \ZZ 
}
\]
commutes. 

Since $\ZZ$ is discrete and the map $e_*\colon \ZZ\to \ZZ[BG]$ induces an isomorphism on $\pi_0$, it is enough to check the commutativity of this triangle after pre-composing with $e_*$. This, in turn, follows from the facts that $\tr_G\circ e_* = |G|$ and $\pi_*e_*=1$. 
\end{proof}

As a result, we see that the functor $(-)_\ZZ$ indeed ''strictifies" the transfer maps. Namely,
\begin{cor}
\label{transfer_strict_trivial}
With the same settings as in \Cref{triv_tras_ZZ}, for $X\in \Sp^\cn$, the two maps 
\[
X\oto{\push_e} X^{BG} 
\]
and
\[
X\oto{p} X \oto{\pi^*} X^{BG}
\]
induce homotopic maps 
\[
\zlin{X}\to \zlin{(X^{BG})}. 
\]
\end{cor}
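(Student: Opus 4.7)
The plan is to reduce the claim to \Cref{triv_tras_ZZ} by transporting both sides along the functor $\zlin{-}$. The main observation is that $\zlin{X} = \hom_{\Sp^\cn}(\ZZ,X)$ is naturally a connective $\ZZ$-module spectrum, and that the functor
\[
(-)_\ZZ \colon \Sp^\cn \to \Mod_\ZZ^\cn
\]
is right adjoint to the forgetful functor. In particular it is an exact functor between stable $\infty$-categories.

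First, since $(-)_\ZZ$ is a right adjoint it preserves all limits, and so there is a canonical equivalence $\zlin{(X^{BG})}\simeq (\zlin{X})^{BG}$, natural in $X \in \Sp^\cn$. Next, I would argue that under this identification the map $\zlin{(\pi^*)}$ is the constant-diagram map $\pi^* \colon \zlin{X} \to (\zlin{X})^{BG}$ in $\Mod_\ZZ^\cn$, which is immediate from the naturality of the unit $\pi^*\colon \Id \to (-)^{BG}$ with respect to $(-)_\ZZ$. The same naturality shows that $\zlin{(|G|\cdot \mathrm{id}_X)}$ is multiplication by $|G|$ on $\zlin{X}$.

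The key step is to show that $\zlin{(\push_e)} \colon \zlin{X}\to (\zlin{X})^{BG}$ coincides with the transfer $\push_e$ for $\zlin{X}$, viewed as a $\ZZ$-module. This is a general feature of transfers in semiadditive stable $\infty$-categories: any exact (equivalently, limit-preserving) functor $F$ between them intertwines the transfer maps, because these are built canonically from finite products/coproducts and the semiadditive structure (the relevant formal statement is that $F$ induces a symmetric monoidal functor between the associated span categories; see the discussion around \cite[Notation 3.1.8]{TeleAmbi}). Applying this to $F=(-)_\ZZ$ gives the desired compatibility. This is the step that requires the most care; everything else is naturality.

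Finally, combining the three identifications, $\zlin{(\push_e)}$ and $\zlin{(\pi^*\circ |G|)}$ become the two parallel maps of \Cref{triv_tras_ZZ} for the $\ZZ$-module spectrum $\zlin{X}$, and therefore agree. This yields the claimed homotopy.
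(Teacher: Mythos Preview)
Your proof is correct and follows essentially the same approach as the paper: reduce to \Cref{triv_tras_ZZ} by showing that $(-)_\ZZ$ intertwines each of the three maps $\push_e$, $\pi^*$, and multiplication by $|G|$; the paper likewise cites the limit-preservation of $(-)_\ZZ$ and the general compatibility of transfers with limit-preserving functors (\cite[Corollary 3.2.7]{TeleAmbi}). One minor slip: $\Sp^\cn$ and $\Mod_\ZZ^\cn$ are not stable, so calling $(-)_\ZZ$ ``an exact functor between stable $\infty$-categories'' is inaccurate---but this is harmless, since what you actually use (and what the paper uses) is that $(-)_\ZZ$ is a limit-preserving functor between additive $\infty$-categories, which suffices for the transfer compatibility.
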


\begin{proof}
The functor $\zlin{(-)}\colon \Sp^\cn\to \Mod_\ZZ^\cn$ is limit preserving, and hence:
\begin{itemize}
    \item It takes the transfer map $\push_e\colon X\to X^{BG}$ to the transfer map 
\[
\push_e\colon \zlin{X}\to \zlin{(X^{BG})}\simeq (\zlin{X})^{BG},
\]
(see, e.g., \cite[Corollary 3.2.7]{TeleAmbi}).
\item It takes the pullback map $\pi^*\colon X\to X^{BG}$ to the pullback map 
\[
\pi^*\colon \zlin{X}\to \zlin{(X^{BG})}\simeq (\zlin{X})^{BG},
\]
since post-composition with $(-)_\ZZ$ and pre-composition with $\pi$ commute. 
\item It intertwines the multiplication by $p$ maps since it is an additive functor. 
\end{itemize}
Combining these three properties, it remains to show that the triangle 
\[
\xymatrix{
\zlin{X}\ar^p[d] \ar^{\push_e}[r] & (\zlin{X})^{BC_p} \\ \zlin{X}\ar_{\pi^*}[ru] & 
}
\]
commutes in $\Mod_\ZZ^\cn$. 
This, in turn, follows from \Cref{triv_tras_ZZ}.
\end{proof}

\subsection{The Strict Picard Spectrum}
\label{subsec:The Strict Picard Spectrum}
The strict Picard spectrum of $\cC$ is obtained from its Picard spectrum by strictification.

\begin{defn}
Let $\cC$ be a symmetric monoidal $\infty$-category. We denote  
\[
\spic(\cC):= \pic(\cC)_\ZZ, 
\]
and refer to $\spic(\cC)$ as the \tdef{strict Picard spectrum} of $\cC$. If $R$ is a commutative ring spectrum, we denote $\spic(R):= \spic(\Mod_R)$. We refer to points in $\spic(R)$ as \tdef{strictly invertible $R$-modules}.  
\end{defn}

\begin{rem} \label{spic_perf}
For a commutative ring spectrum $R$, 
every dualizable, and in particular invertible $R$-module is perfect. Hence, we have $\spic(R)\simeq \spic(\Perf_R)$. 
\end{rem}

\begin{rem} \label{spic_G_m}
Since $\zlin{(-)}$ is limit preserving, we obtain from \Cref{eq:loops_pic_GL1} that 
\[
\Omega \spic(\cC) \simeq \GG_m(\one_\cC):= \zlin{(\one_\cC^\times)}, 
\]
the spectrum of \tdef{strict units} of $\one_\cC$. 
\end{rem}

\subsubsection{Equivariant Powers and Transfer maps}

Restricting the equivariant power map $\Theta^p_\cC \colon \cC \to \cC^{BC_p}$ to the Picard spectra, we obtain a natural morphism 
\[
\pic(\Theta^p_\cC) \colon \pic(\cC) \to \pic(\cC^{BC_p}).
\] 
This map can be interpreted as a transfer map, this time in $\Sp^\cn$:
\begin{prop} \label{equiv_pow_transfer}
Let $e\colon \pt \to BC_p$ be the point inclusion, and let 
$\push_e \colon \Id_\Sp \to (-)^{BC_p}$ be the corresponding transfer map. Then, the map $\pic(\Theta^p_\cC)\colon \pic(\cC) \to \pic(\cC^{BC_p})\simeq \pic(\cC)^{BC_p}$ agrees with the $\pic(\cC)$-component of $\push_e$.
\end{prop}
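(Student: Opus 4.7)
The plan is to leverage the interpretation of $\Pow^p_\cC$ as an integration map in the semiadditive $\infty$-category $\calg(\cat_\infty)$, recorded in the remark following the definition of $\Pow^p_\cC$: one has $\Pow^p_\cC \simeq \push_e \colon \cC \to \cC^{BC_p}$ already in $\calg(\cat_\infty)$. The proposition then reduces to the assertion that $\pic$ commutes with such transfers.

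First, I would establish that $\pic \colon \calg(\cat_\infty) \to \Sp^\cn$ preserves finite limits. This follows from its factorization through the underlying-groupoid functor $(-)^\simeq \colon \calg(\cat_\infty) \to \CMon(\Spc)$ (which preserves limits) followed by the maximal grouplike submonoid functor $\CMon(\Spc) \to \Sp^\cn$, which is right adjoint to the inclusion $\Sp^\cn \hookrightarrow \CMon(\Spc)$ and therefore also limit-preserving. In particular, one obtains the natural isomorphism $\pic(\cC^{BC_p}) \simeq \pic(\cC)^{BC_p}$ already implicit in the statement.

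Next, I would invoke the general fact that any finite-product-preserving functor between semiadditive $\infty$-categories is automatically semiadditive, and hence preserves integration maps $\push_f$ along any map $f$ of spaces with finite discrete fibers (this fits into the formalism of \cite{TeleAmbi}). Applying this to $\pic$ and to the basepoint inclusion $e \colon \pt \to BC_p$ yields the desired identification: the composite
\[
\pic(\cC) \oto{\pic(\Pow^p_\cC)} \pic(\cC^{BC_p}) \simeq \pic(\cC)^{BC_p}
\]
coincides with the transfer $\push_e$ in $\Sp^\cn$ evaluated at $\pic(\cC)$.

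The main thing requiring care is the bookkeeping around semiadditivity and the comparison of integration maps across the two categories. Once the general principle that product-preserving functors between semiadditive $\infty$-categories preserve transfer maps is in hand, the proposition follows without further computation.
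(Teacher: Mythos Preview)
Your approach matches the paper's exactly: identify $\Pow^p_\cC$ with the transfer $\int_e$ in $\calg(\cat_\infty)$ via the earlier remark, then use that $\pic$ preserves limits and hence intertwines transfers (the paper cites \cite[Corollary 3.2.7]{TeleAmbi} for this last step). One small point to tighten: your factorization through right adjoints in fact shows that $\pic$ preserves \emph{all} limits, and you should say so rather than ``finite limits'' or ``finite products''---both the identification $\pic(\cC^{BC_p})\simeq \pic(\cC)^{BC_p}$ and the compatibility with $\int_e$ require preservation of $BC_p$-indexed limits, which are not finite.
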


\begin{proof}
As in \Cref{equiv_pow_transfer}, we can identify the functor 
\[
\Theta^p_\cC \colon \cC \to \cC^{BC_p} 
\]
with the transfer along $e$ in the $\infty$-category $\calg(\cat_\infty)$. The functor $\pic\colon \calg(\cat_\infty)\to \Sp^\cn$ is limit-preserving and hence intertwines the transfer maps (see, e.g., \cite[Corollary 3.2.7]{TeleAmbi}). This implies the result.
\end{proof}

Applying the functor $\zlin{(-)}$ to the morphism 
\[
\pic(\Theta^p_\cC) \colon \pic(\cC)\to \pic(\cC^{BC_p}) \qin \Sp^\cn,
\]
we get a morphism
\[
\spic(\Theta^p_\cC) \colon \spic(\cC) \to \spic(\cC^{BC_p}) \qin \Mod_\ZZ^\cn,
\]
functorial in $\cC\in \calg(\cat_\infty)$. 
In fact, the functor $\spic(-)$ trivializes $\Theta_\cC^p$, in the following sense: 
\begin{prop}\label{equiv_power_trivial_strict}
Let $\cC\in \calg(\cat_\infty)$ and let $\pi\colon BC_p \to \pt$ be the terminal map. The diagram 
\[
\xymatrix{
\spic(\cC) \ar^{p}[d] \ar^{\spic(\Theta^p_\cC)}[rr] & & \spic(\cC^{BC_p})     \\
\spic(\cC) \ar_{\pi^*}[rru] & &  
}
\]
commutes in $\Mod_\ZZ^{\cn}$. 
\end{prop}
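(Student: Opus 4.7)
The plan is to combine \Cref{equiv_pow_transfer} with \Cref{transfer_strict_trivial}, so that the proof reduces to assembling two already-proven facts, each of which captures one half of the claim: one identifies $\pic(\Theta^p_\cC)$ as a transfer map of connective spectra, and the other says that transfer maps become multiplication by the group order once we pass to strict elements.

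First I would invoke \Cref{equiv_pow_transfer}, which tells us that
\[
\pic(\Theta^p_\cC)\colon \pic(\cC)\to \pic(\cC^{BC_p})\simeq \pic(\cC)^{BC_p}
\]
is the transfer $\push_e\colon X\to X^{BC_p}$ in $\Sp^\cn$, applied to $X = \pic(\cC)$ (where the equivalence $\pic(\cC^{BC_p})\simeq \pic(\cC)^{BC_p}$ uses that $\pic$ is limit preserving, as $BC_p$-local systems are just finite limits in $\calg(\cat_\infty)$).

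Next, since the functor $\spic(-) = \zlin{\pic(-)}$ is obtained by post-composing $\pic$ with $\zlin{(-)}$, we have
\[
\spic(\Theta^p_\cC) \simeq \zlin{\push_e}\colon \zlin{\pic(\cC)}\to \zlin{\pic(\cC)^{BC_p}} \simeq \spic(\cC)^{BC_p},
\]
again using that $\zlin{(-)}$ commutes with limits. At this point, \Cref{transfer_strict_trivial}, applied with $X= \pic(\cC)$ and $G = C_p$, tells us exactly that the two maps $\zlin{\push_e}$ and $\pi^*\circ p$ from $\spic(\cC)$ to $\spic(\cC)^{BC_p}$ are homotopic in $\Mod_\ZZ^\cn$. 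Composing with the equivalence $\spic(\cC)^{BC_p}\simeq \spic(\cC^{BC_p})$ completes the argument.

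I do not expect any genuine obstacle here, since the nontrivial content (the semiadditive identification of $\Theta^p_\cC$ with a transfer, and the strictification of transfer maps) has already been extracted in \Cref{equiv_pow_transfer} and \Cref{transfer_strict_trivial}. The only thing to be careful about is bookkeeping of the two limit-preservation arguments ensuring that the identifications $\pic(\cC^{BC_p})\simeq \pic(\cC)^{BC_p}$ and $\zlin{\pic(\cC)^{BC_p}}\simeq \spic(\cC)^{BC_p}$ are natural enough to transport the homotopy of \Cref{transfer_strict_trivial} into the required commutative triangle.
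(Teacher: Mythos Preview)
Your proposal is correct and follows essentially the same approach as the paper's own proof: invoke \Cref{equiv_pow_transfer} to identify $\pic(\Theta^p_\cC)$ with the transfer map $\push_e$, then apply \Cref{transfer_strict_trivial} with $X=\pic(\cC)$ and $G=C_p$. Your added remarks about the limit-preservation identifications $\pic(\cC^{BC_p})\simeq \pic(\cC)^{BC_p}$ and $\zlin{(X^{BC_p})}\simeq (\zlin{X})^{BC_p}$ are just making explicit what the paper leaves implicit.
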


\begin{proof}
By \Cref{equiv_pow_transfer}, we can identify the map $\pic(\Theta^p_\cC)$ with the transfer map along the base-point inclusion $e\colon \pt \to BC_p$. Hence, the result follows from \Cref{transfer_strict_trivial}.
 \end{proof}
 
\section{The Strict Picard Spectrum of Commutative Ring Spectra}
We now restrict our attention to strict Picard spectra of commutative ring spectra. In \Cref{subsec:The Frobenius Image of Strict Picard Elements} we use \Cref{equiv_power_trivial_strict} to show that the extension of scalars along $\fr^p$ and $\can^p$ differ by a $p$-th tensor power on strict Picard elements. Then, in \Cref{subsec:pefrect E_infty rings} we apply this to the strict Picard spectra of commutative ring spectra for which the map $\varphi^p$ is invertible. We show that for such a commutative ring spectrum $R$, multiplication by $p$ is invertible on $\spic(R)$, and it depends only on the $1$-truncation of $\pic(R)$. We also determine the strict Picard spectra of spherical Witt vectors over perfect rings of characteristic $p$. In particular, we determine the strict Picard spectrum of the $p$-complete spheres. 
Finally, in \Cref{subsec:trictly Invertible Spectra} we use the arithmetic fracture square and the determination of $\spic(\Sph_p)$ to compute the strict Picard spectrum of the sphere spectrum.

\subsection{The Frobenius Image of Strictly Invertible Modules}
\label{subsec:The Frobenius Image of Strict Picard Elements}
Our goal in this section is to prove a relation between the maps $\fr^p\colon \spic(R)\to \spic(R^{tC_p})$ and $\can^p\colon \spic(R)\to \spic(R^{tC_p})$. The main technical difficulty is that their constructions involve the Tate construction for the group $C_p$, which is only lax symmetric monoidal on $(\Perf_R)^{BC_p}$, and hence does not induce maps on Picard spectra. To overcome this difficulty, we introduce the following alternative for $(\Perf_R)^{BC_p}$:

\begin{defn}
For a commutative ring spectrum $R$, define
\[
\mdef{\Perf_{R,C_p}}\subseteq (\Perf_R)^{BC_p}
\] 
to be the thick subcategory generated from the trivial local systems $\pi^*X$ and the induced local systems $e_!X$ for all $X\in \Perf_R$. Here, $\pi\colon BC_p\to \pt$ and $e\colon \pt \to BC_p$ are the terminal map and the base-point inclusion respectively.  
\end{defn}
\begin{rem} \label{rem:generators_equiv_gen}
Note that, since the thick subcategory of $\Perf_R$ generated from $R$ is all of $\Perf_R$, the stable $\infty$-category $\Perf_{R,C_p}$ is generated from $\pi^*R$ and $e_!R$. 
\end{rem}

The $\infty$-category $\Perf_{R,C_p}$ has two key features that we shall use. First, there is no harm in replacing $(\Perf_R)^{BC_p}$ by $\Perf_{R,C_p}$. Namely:

\begin{lem} 
\label{equiv_power_constructible}
Let $R$ be a commutative ring spectrum. The functors $\pi^*,\Theta_R^p\colon \Perf_R\to (\Perf_R)^{BC_p}$ land in $\Perf_{R,C_p}$.
\end{lem}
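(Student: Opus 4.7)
The claim for $\pi^*$ is immediate: for any $X \in \Perf_R$, the local system $\pi^*X$ is one of the generators of $\Perf_{R,C_p}$ by definition. The content of the lemma is therefore entirely about $\Theta_R^p$, and the plan is to argue by a thick subcategory argument, using the filtration from \Cref{equiv_power_filtration} as a replacement for exactness (which $\Theta_R^p$ lacks).

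More precisely, I would consider the full subcategory
\[
\cD := \{\, X \in \Perf_R \;:\; \Theta_R^p(X) \in \Perf_{R,C_p}\,\} \subseteq \Perf_R,
\]
and show that $\cD$ is thick and contains $R$. By \Cref{rem:generators_equiv_gen} (or directly, since $\Perf_R$ is the thick subcategory generated by $R$), this will force $\cD = \Perf_R$. The base case $R \in \cD$ is clear: $\Theta_R^p(R) \simeq R^{\otimes p}$ with the cyclic permutation action, and under the canonical isomorphism $R^{\otimes p} \simeq R$ this action is trivial, so $\Theta_R^p(R) \simeq \pi^*R$, which is a generator of $\Perf_{R,C_p}$.

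Next, closure of $\cD$ under retracts is automatic from the functoriality of $\Theta_R^p$ (retracts are preserved by any functor, and $\Perf_{R,C_p}$ is closed under retracts). The main step is closure under cofibers, which also delivers closure under (de)suspensions by applying it to $X \to 0 \to \Sigma X$ and $\Omega X \to 0 \to X$. Given a cofiber sequence $X \to Y \to Z$ in $\Perf_R$ with $X,Z \in \cD$, \Cref{equiv_power_filtration} produces a finite filtration on $\Theta_R^p(Y)$ whose associated graded pieces are $\Theta_R^p(X)$, $\Theta_R^p(Z)$, and objects of the form $e_!\bigl(X^{\otimes |I|} \otimes Z^{\otimes p-|I|}\bigr)$ for non-empty proper $I \subseteq \{1,\dots,p\}$ (taken up to the $C_p$-orbit). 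The first two pieces lie in $\Perf_{R,C_p}$ by hypothesis on $X$ and $Z$, while the remaining pieces lie in $\Perf_{R,C_p}$ by definition, since $X^{\otimes |I|} \otimes Z^{\otimes p-|I|} \in \Perf_R$ and $\Perf_{R,C_p}$ contains all such $e_!$-induced objects. Thickness of $\Perf_{R,C_p}$ then forces $\Theta_R^p(Y) \in \Perf_{R,C_p}$, i.e.\ $Y \in \cD$.

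The main (minor) subtlety is simply to make sure the filtration of \Cref{equiv_power_filtration} is indeed a filtration \emph{inside} $(\Perf_R)^{BC_p}$ with associated graded pieces of the stated form, so that one can repeatedly take cofibers while staying in $\Perf_{R,C_p}$; this is precisely the content of that proposition. Once this is in hand, the thickness of $\cD$ is immediate, concluding that $\cD = \Perf_R$ and hence that $\Theta_R^p$ lands in $\Perf_{R,C_p}$.
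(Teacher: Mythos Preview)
Your overall strategy is sound and uses the same essential input as the paper (the filtration from \Cref{equiv_power_filtration}), but there is a genuine slip in the thick–subcategory argument. You announce that you will verify closure of $\cD$ under \emph{cofibers}, yet what you actually verify is closure under \emph{extensions}: you assume $X,Z\in\cD$ and conclude $Y\in\cD$. Closure under extensions and retracts is not enough to force $\cD=\Perf_R$; in particular, your claimed deduction of desuspensions from ``$\Omega X\to 0\to X$'' does not go through, since closure under cofibers (let alone extensions) only lets you conclude something about the \emph{third} term from the first two, not conversely. The fix is easy and uses the very same filtration: from the finite filtration on $\Theta^p_R(Y)$ with bottom graded piece $\Theta^p_R(X)$, top piece $\Theta^p_R(Z)$, and induced pieces in between, you get the full two-out-of-three property for $\cD$, since any two of $\Theta^p_R(X),\Theta^p_R(Y),\Theta^p_R(Z)$ lying in $\Perf_{R,C_p}$ forces the third (the induced pieces are always in $\Perf_{R,C_p}$). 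With that correction your argument is complete.

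For comparison, the paper packages the same idea more cleanly via the Verdier quotient: the filtration shows that the composite
\[
\Perf_R \xrightarrow{\;\Theta^p_R\;} (\Perf_R)^{BC_p} \longrightarrow (\Perf_R)^{BC_p}/\Perf_{R,C_p}
\]
is \emph{exact} (all induced graded pieces die in the quotient), and it sends $R$ to $0$ since $\Theta^p_R(R)\simeq\pi^*R$. An exact functor out of $\Perf_R$ that kills $R$ is identically zero, so $\Theta^p_R$ lands in $\Perf_{R,C_p}$. This sidesteps the need to check the thick–subcategory axioms by hand, at the cost of invoking the Verdier quotient; your direct approach is more elementary but requires the care above.
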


\begin{proof}
For $\pi^*$ this is clear from the definition of $\Perf_{R,C_p}$. We prove the claim for $\Pow^p_R$.
It is a consequence of  \Cref{equiv_power_filtration} that the composition of $\Pow^p_R\colon \Perf_R \to (\Perf_R)^{BC_p}$ with the Verdier quotient 
$(\Perf_R)^{BC_p}\to (\Perf_R)^{BC_p}/\Perf_{R,C_p}$ is exact. Since it carries $R$ to the constant local system $\pi^*R$, which is sent to $0$ in the Verdier quotient, and since $\Perf_R$ is generated from $R$ under finite colimits, retracts, and desuspensions, we deduce that this composition is the zero functor. Namely, that $\Pow^p_R$ lands in $\Perf_{R,C_p}$. 
\end{proof}
The main advantage of $\Perf_{R,C_p}$ over $(\Perf_{R})^{BC_p}$ is the following:
\begin{lem} \label{tate_symm_mon_constructibles}
Let $R$ be a commutative ring spectrum. The lax symmetric monoidal functor \[
(-)^{tC_p}\colon \Perf_{R,C_p} \to \Mod_{R^{tC_p}}
\] 
is symmetric monoidal. 
\end{lem}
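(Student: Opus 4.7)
The plan is to verify that the lax monoidal structure maps of $(-)^{tC_p}\colon \Perf_{R,C_p}\to \Mod_{R^{tC_p}}$ become equivalences when restricted to this subcategory. Recall that a lax symmetric monoidal functor is strong symmetric monoidal if and only if its unit map and all its binary lax structure maps are equivalences. The unit map $R^{tC_p}\to (\mathbf{1}_{\Perf_{R,C_p}})^{tC_p}=(\pi^*R)^{tC_p}$ is the canonical identification and hence an equivalence, so only the binary structure maps
\[
\mu_{X,Y}\colon X^{tC_p}\otimes_{R^{tC_p}} Y^{tC_p}\longrightarrow (X\otimes_R Y)^{tC_p}
\]
require attention.

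To handle the binary maps, I will reduce to the generators of $\Perf_{R,C_p}$. Both the source and target of $\mu_{X,Y}$ are exact in each variable separately: the source because it is a tensor product of an exact functor in one variable with a fixed object in the other; the target because $(-)^{tC_p}$ is defined as the cofiber of a natural map of exact functors, hence is itself exact on all of $(\Perf_R)^{BC_p}$. By \Cref{rem:generators_equiv_gen}, the thick subcategory $\Perf_{R,C_p}$ is generated by $\pi^*R$ and $e_!R$, so it suffices to verify that $\mu_{X,Y}$ is an equivalence in the four cases $X,Y\in\{\pi^*R,\,e_!R\}$.

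The four cases split naturally by how many factors are induced. When $X=Y=\pi^*R$, the source and target are both canonically $R^{tC_p}$ via \Cref{canonical_map_ext_scalars}, and $\mu_{X,Y}$ is the identity. When at least one of $X,Y$ equals $e_!R$, the source vanishes because $(e_!R)^{tC_p}\simeq 0$ by \Cref{Tate_lax}. For the target, the projection formula identifies $\pi^*R\otimes_R e_!R\simeq e_!(e^*\pi^*R\otimes_R R)\simeq e_!R$ in the mixed case, and $e_!R\otimes_R e_!R\simeq e_!(R\otimes_R e^*e_!R)$ in the remaining case; both are induced local systems, so by \Cref{Tate_lax} their Tate constructions vanish. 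Hence $\mu_{X,Y}$ is a map between equal objects (either both $R^{tC_p}$ or both $0$) in every generator case.

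The only technical inputs beyond \Cref{Tate_lax} are the exactness of $(-)^{tC_p}$ in each variable (which is immediate from its construction) and the projection formula $e_!X\otimes_R Y\simeq e_!(X\otimes_R e^*Y)$ in the equivariant setting; the latter is standard for a left adjoint between module categories over a symmetric monoidal base. With these in hand, the four-case analysis above is essentially a formality, and I do not anticipate any serious obstacle.
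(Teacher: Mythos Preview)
Your proposal is correct and follows essentially the same approach as the paper: reduce via exactness to the generators $\pi^*R$ and $e_!R$, then use unitality for the trivial generator and the projection formula plus vanishing of $(-)^{tC_p}$ on induced local systems for the other. The paper organizes this slightly more efficiently by reducing in only one variable (so two cases rather than four), noting that $X=\pi^*R$ is handled by unitality for all $Y$ and $X=e_!R$ is handled by the projection formula $e_!R\otimes Y\simeq e_!e^*Y$ for all $Y$; but the substance is the same.
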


\begin{proof}

Since this functor is unital by design, we only have to show that the lax monoidality map 
\[
\nu_{X,Y}\colon X^{tC_p}\otimes Y^{tC_p}\to (X\otimes Y)^{tC_p} 
\]
is an isomorphism for every $X,Y\in \Perf_{R,C_p}$. Since the source and target of $\nu$ are exact functors of $X$ and $Y$ separately, and in view of \Cref{rem:generators_equiv_gen}, it suffices to prove this for $X=\pi^*R$ or $X= e_!R$. In the first case, the claim follows from the unitality of $(-)^{tC_p}$. In the second case, we have $(e_!R)^{tC_p} \otimes Y^{tC_p} = 0$ since $(-)^{tC_p}$ vanishes on the induced local system $e_!R$. On the other hand, by the ''projection formula" for $e_!$ we have 
\[
(e_!R)\otimes Y\simeq e_!e^*Y  
\] 
which is also induced, and hence $(e_!R\otimes Y)^{tC_p} = 0$ as well. We deduce that $\nu_{e_!R,Y}$ is a morphism between zero objects, hence an isomorphism. 
\end{proof}

We are ready to prove the main result of this subsection:
\begin{prop}\label{fundamental_identity}
Let $R$ be a commutative ring spectrum. The following triangle commutes in $\Mod_\ZZ^\cn$: 
\[
\xymatrix{
\spic(R) \ar^{p}[d] \ar^{\fr^p}[rr] & & \spic(R^{tC_p})     \\
\spic(R) \ar_{\can^p}[rru] & &  
}
\]
 In other words, for a strictly invertible $R$-module $X\colon \ZZ \to \pic(R)$, there is a  natural isomorphism 
\begin{equation}
\label{eq:foundamental_Indetity}
\fr^p_*(X)\simeq \can^p_*(X^{\otimes p}).
\end{equation}
\end{prop}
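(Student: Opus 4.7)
The plan is to deduce the identity by applying $\spic(-)^{tC_p}$ to the relation $\spic(\Pow^p_R) \simeq \pi^* \circ p$ supplied by \Cref{equiv_power_trivial_strict}. Under the identifications of \Cref{canonical_map_ext_scalars} and \Cref{Frobemious_ext_scalars}, which express $\can^p_*$ and $\fr^p_*$ on perfect modules as $(-)^{tC_p}$ applied to $\pi^*$ and to $\Pow^p_R$ respectively, the resulting diagram will be exactly the triangle we want.

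The main obstacle is that the Tate construction $(-)^{tC_p}\colon (\Perf_R)^{BC_p} \to \Mod_{R^{tC_p}}$ is only \emph{lax} symmetric monoidal, and hence does not a priori descend to a morphism of Picard spectra. This is precisely the reason for introducing the intermediate subcategory $\Perf_{R,C_p}$: by \Cref{equiv_power_constructible} the functors $\pi^*$ and $\Pow^p_R$ factor through $\Perf_{R,C_p}$, and by \Cref{tate_symm_mon_constructibles} the restriction of $(-)^{tC_p}$ to $\Perf_{R,C_p}$ is genuinely symmetric monoidal (and, as one checks on the two classes of generators, lands in $\Perf_{R^{tC_p}}$).

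With these ingredients in place, the argument proceeds in two steps. First, \Cref{equiv_power_trivial_strict} applied to $\cC = \Perf_R$, combined with the factorizations through $\Perf_{R,C_p}$, yields a commuting triangle $\spic(\Pow^p_R) \simeq \pi^* \circ p$ as maps $\spic(R) \to \spic(\Perf_{R,C_p})$ in $\Mod_\ZZ^\cn$. Second, post-composing with $\spic((-)^{tC_p})\colon \spic(\Perf_{R,C_p}) \to \spic(R^{tC_p})$ converts the left-hand composite into $\fr^p$ via \Cref{Frobemious_ext_scalars} and the right-hand composite into $\can^p \circ p$ via \Cref{canonical_map_ext_scalars}. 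This produces the desired commuting triangle in $\Mod_\ZZ^\cn$, and evaluating on a point $X\colon \ZZ \to \pic(R)$ recovers the natural isomorphism \eqref{eq:foundamental_Indetity}.
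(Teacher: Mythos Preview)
Your outline follows the paper's argument essentially verbatim: factor $\fr^p_*$ and $\can^p_*$ through $\Perf_{R,C_p}$ via \Cref{equiv_power_constructible}, use \Cref{tate_symm_mon_constructibles} to make $(-)^{tC_p}$ symmetric monoidal there, invoke \Cref{equiv_power_trivial_strict}, and post-compose. The overall strategy is exactly right.

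There is one step you pass over too quickly. \Cref{equiv_power_trivial_strict} gives you a commuting triangle with target $\spic\bigl((\Perf_R)^{BC_p}\bigr)$, not $\spic(\Perf_{R,C_p})$. Knowing that the two individual maps factor through $\spic(\Perf_{R,C_p})$ does \emph{not} by itself tell you that the homotopy between them lifts there; you need to argue that the inclusion $\spic(\Perf_{R,C_p}) \to \spic\bigl((\Perf_R)^{BC_p}\bigr)$ is a monomorphism of connective spectra. This is exactly what the paper supplies: the fully faithful embedding $\Perf_{R,C_p}\hookrightarrow (\Perf_R)^{BC_p}$ makes the induced map on $\spic$ an inclusion of path components (injective on $\pi_0$ and a pullback over its image), so the homotopy witnessing the triangle lifts uniquely. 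Once you insert that sentence, your proof is complete and matches the paper's.
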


\begin{proof}
Recall that $\spic(R)= \spic(\Perf_R)$.
Now, we have a symmetric monoidal fully faithful embedding $\Perf_{R,C_p}\into (\Perf_R)^{BC_p}$. By \Cref{equiv_power_constructible}, the functors $\Pow_R^p$ and $\pi^*$ factor through $\Perf_{R,C_p}$. Consequently, using \Cref{Frobemious_ext_scalars}, we can decompose the functor $\fr^p_* \colon \Perf_R \to \Perf_{R^{tC_p}}$ as the composition of \emph{symmetric monoidal functors}
\[
\Perf_R \oto{\Pow^p_R} \Perf_{R,C_p} \oto{(-)^{tC_p}} \Perf_R. 
\]
Similarly, this time using \Cref{canonical_map_ext_scalars}, we can write the functor $\can^p\colon \Perf_R \to \Perf_{R,C_p}$ as the composition 
\[
\Perf_R \oto{\pi^*} \Perf_{R,C_p} \oto{(-)^{tC_p}} \Perf_R. 
\]

Finally, the fully faithfulness of the embedding $\Perf_{R,C_p}\into (\Perf_R)^{BC_p}$ implies that the map $\spic(\Perf_{R,C_p}) \to \spic((\Perf_R)^{BC_p})$ is an inclusion of connective $\ZZ$-modules, in the sense that it is injective on $\pi_0$ and induces a pullback square 
\[
\xymatrix{
\spic(\Perf_{R,C_p})\ar[r] \ar[d]      & \spic((\Perf_{R})^{BC_p})\ar[d] \\ 
\pi_0\spic(\Perf_{R,C_p})\ar[r] & \pi_0\spic((\Perf_{R})^{BC_p}). 
}
\]
Since, for $\cC = \Perf_R$, both of the paths in the triangle from \Cref{equiv_power_trivial_strict} land in $\spic(\Perf_{R,C_p})$, we obtain a commutative triangle 
\[
\xymatrix{
\spic(\Perf_R) \ar^{p}[d] \ar^{\spic(\Theta^p_\cC)}[rr] & & \spic(\Perf_{R,C_p})     \\
\spic(\Perf_R) \ar_{\pi^*}[rru] & &  
}.
\]
Composing the two paths in the latter triangle with the map $\Perf_{R,C_p} \to \Perf_{R^{tC_p}}$ induced from the \emph{symmetric monoidal} functor $(-)^{tC_p}\colon \Perf_{R,C_p} \to \Perf_{R^{tC_p}}$ we obtain the result. 
\end{proof}

\subsection{Perfect Commutative Ring Spectra}
\label{subsec:pefrect E_infty rings}
We now exploit the commutative triangle in \Cref{fundamental_identity} to study the strict Picard spectrum of the following type of rings. 
\begin{defn}
A $p$-complete commutative ring spectrum $R$ is called \tdef{perfect} if the Tate-valued Frobenius map $\fr^p\colon R\to R^{tC_p}$ is an isomorphism. 
\end{defn}

\begin{rem}
This is a generalization of the notion of \emph{p-perfect} commutative ring spectra from \cite{yuan2022integral}. We are unaware of examples of perfect commutative ring spectra that are not $p$-perfect. 
\end{rem}

Our determination of the strict Picard spectrum of perfect commutative ring spectra essentially originates from the following structural result:
\begin{prop} \label{p_invertible_strict_picard}
Let $R$ be a perfect $p$-complete commutative ring spectrum. Then, the multiplication by $p$ map is an isomorphism on $\spic(R)$: 
\[
p\colon \spic(R)\iso \spic(R). 
\]
\end{prop}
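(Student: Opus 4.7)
The plan is to leverage the triangle of \Cref{fundamental_identity} to produce an explicit two-sided inverse for multiplication by $p$ on $\spic(R)$, using that all maps involved are $\ZZ$-linear.

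First, I would observe that the perfectness assumption means $\fr^p \colon R \to R^{tC_p}$ is an isomorphism of commutative ring spectra. Since $\spic$ is a functor from $\calg(\cat_\infty)$ to $\Mod_\ZZ^\cn$, this yields an induced isomorphism $\fr^p_* \colon \spic(R) \iso \spic(R^{tC_p})$ of connective $\ZZ$-module spectra.

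Next, I would define the candidate inverse $u := (\fr^p_*)^{-1} \circ \can^p_* \colon \spic(R) \to \spic(R)$. Composing the commutative triangle of \Cref{fundamental_identity} on the left with $(\fr^p_*)^{-1}$ immediately yields
\[
u \circ p = (\fr^p_*)^{-1} \circ \can^p_* \circ p \simeq (\fr^p_*)^{-1} \circ \fr^p_* \simeq \id_{\spic(R)},
\]
so $u$ is a left inverse to $p$.

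To upgrade this to a two-sided inverse, I would invoke the fact that multiplication by $p$ is a natural endomorphism of the identity functor of $\Mod_\ZZ^\cn$, and hence commutes with any $\ZZ$-linear morphism. In particular $u \circ p \simeq p \circ u$, and combined with the identity above this gives $p \circ u \simeq \id$, so $u$ is also a right inverse and $p$ is an isomorphism on $\spic(R)$.

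The argument is essentially formal once \Cref{fundamental_identity} is available; there is no substantive obstacle. The only point worth checking carefully is that everything can be arranged coherently in the $\infty$-categorical setting, but this is automatic because $p \colon \id_{\Mod_\ZZ^\cn} \to \id_{\Mod_\ZZ^\cn}$ is a natural transformation of functors, so its centrality among $\ZZ$-linear morphisms holds at the coherent level.
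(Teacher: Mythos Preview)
Your proof is correct and follows essentially the same approach as the paper: use the triangle from \Cref{fundamental_identity} together with the invertibility of $\fr^p_*$ to exhibit a one-sided inverse $(\fr^p_*)^{-1}\circ \can^p_*$ to multiplication by $p$, then invoke the centrality of $p$ to conclude it is a two-sided inverse. The only difference is cosmetic (the paper writes the one-sided inverse as $\can^p\circ(\fr^p)^{-1}$ and calls it a right inverse, but the logic is identical).
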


\begin{proof}
The commutative triangle in \Cref{fundamental_identity} implies that $p$ is invertible from the right, with inverse $\can^p \circ (\fr^p)^{-1}$. Since $p$ is central in the endomorphisms of $\spic(R)$, we deduce that it is invertible.  
\end{proof}

This allows us to replace $\ZZ$ with $\ZZ[1/p]$ in the definition of $\spic(R)$. 

\begin{cor} \label{spic_maps_Z_1_p}
Let $R$ be a perfect $p$-complete commutative ring spectrum. Then 
\[
\spic(R)\simeq \hom_{\Sp^\cn}(\ZZ[1/p],\pic(R)). 
\]
\end{cor}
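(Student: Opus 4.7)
The plan is to combine \Cref{p_invertible_strict_picard} with a cofinality/limit argument, realizing $\ZZ[1/p]$ as a sequential colimit of copies of $\ZZ$.

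First, I would recall that, as a discrete abelian group (hence as a connective spectrum), one has the presentation
\[
\ZZ[1/p] \simeq \colim\bigl(\, \ZZ \xrightarrow{\,p\,} \ZZ \xrightarrow{\,p\,} \ZZ \xrightarrow{\,p\,} \cdots \,\bigr).
\]
This colimit is taken in $\Ab$, and since the inclusion $\Ab \subseteq \Sp^\cn$ preserves filtered colimits, the same formula holds in connective spectra.

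Next, applying the contravariant functor $\hom_{\Sp^\cn}(-,\pic(R))$, which sends colimits to limits, I get
\[
\hom_{\Sp^\cn}(\ZZ[1/p],\pic(R)) \simeq \lim\bigl(\, \cdots \xrightarrow{\,p\,} \spic(R) \xrightarrow{\,p\,} \spic(R) \xrightarrow{\,p\,} \spic(R) \,\bigr),
\]
where each transition map is multiplication by $p$ on $\spic(R)$, by additivity of $\hom_{\Sp^\cn}(-,\pic(R))$ and the identification $\hom_{\Sp^\cn}(\ZZ,\pic(R)) = \spic(R)$ from \Cref{def:strict_elements}.

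Finally, by \Cref{p_invertible_strict_picard}, the map $p\colon \spic(R)\to \spic(R)$ is an isomorphism since $R$ is perfect and $p$-complete. Therefore the tower on the right-hand side has all transition maps invertible, so its inverse limit is canonically equivalent to any of its terms, namely $\spic(R)$. Chasing through the identifications, the resulting equivalence $\spic(R) \iso \hom_{\Sp^\cn}(\ZZ[1/p],\pic(R))$ is induced by restriction along the canonical map $\ZZ \to \ZZ[1/p]$.

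No serious obstacle is expected: the argument is a formal manipulation, and the only substantive input is \Cref{p_invertible_strict_picard}. The only point deserving attention is that the colimit presentation of $\ZZ[1/p]$ used above really takes place in $\Sp^\cn$ (not merely in $\Ab$), which follows from the fact that $\Sp^\cn \supseteq \Ab$ preserves filtered colimits.
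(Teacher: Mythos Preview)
Your proposal is correct and follows essentially the same argument as the paper: both use the presentation $\ZZ[1/p]\simeq \colim(\ZZ\xrightarrow{p}\ZZ\xrightarrow{p}\cdots)$ to rewrite $\hom_{\Sp^\cn}(\ZZ[1/p],\pic(R))$ as the inverse limit $\invlim(\cdots\xrightarrow{p}\spic(R)\xrightarrow{p}\spic(R))$, and then invoke \Cref{p_invertible_strict_picard} to identify this limit with $\spic(R)$. The paper simply runs the chain of identifications in the opposite order.
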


\begin{proof}
Since $p$ is invertible on $\spic(R)$, we obtain:
\begin{align*}
 \spic(R) &\simeq \invlim(\dots\oto{p}\spic(R) \oto{p} \spic(R)) \\
 &\simeq \hom_{\Sp^\cn}(\colim(\ZZ \oto{p}\ZZ \oto{p}\ZZ \oto{p}\dots),\pic(R)) \simeq \hom_{\Sp^\cn}(\ZZ[1/p],\pic(R)). 
\end{align*}
\end{proof}
We turn to compute the strict Picard spectrum of perfect commutative ring spectra.

\begin{thm} 
\label{p_complete_perfect_easy_pic}
Let $R$ be a perfect $p$-complete commutative ring spectrum. Then
$\spic(R)$ is a $1$-truncated connective $\ZZ$-module with the following homotopy groups:
\[
\pi_0\spic(R)\simeq \Hom_\Ab(\ZZ[1/p],\Pic^0(R)) \oplus \Ext^1_{\Ab}(\ZZ[1/p],(\pi_0R)/p^\times)
\]
and
\[
\pi_1\spic(R)\simeq \Hom_\Ab(\ZZ[1/p],(\pi_0R)/p^\times)
\] 
\end{thm}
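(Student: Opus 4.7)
My plan starts from Corollary~\ref{spic_maps_Z_1_p}, which gives $\spic(R)\simeq \hom_{\Sp^\cn}(\ZZ[1/p],\pic(R))$. I would first cut off the higher Postnikov pieces of $\pic(R)$: since $R$ is $p$-complete, each $\pi_{i-1}R$ appearing in $\tau_{\ge 2}\pic(R)$ (for $i\ge 2$) is a $p$-complete abelian group, so $\tau_{\ge 2}\pic(R)$ is a $p$-complete spectrum. Factoring $\ZZ[1/p]\simeq \ZZ\otimes \SS[1/p]$ and using that $\hom_\Sp(\SS[1/p],Y)$ vanishes on $p$-complete $Y$, we get $\hom_\Sp(\ZZ[1/p],\tau_{\ge 2}\pic(R))=0$, and hence $\spic(R)\simeq \hom_{\Sp^\cn}(\ZZ[1/p],\tau_{\le 1}\pic(R))$. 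Since $\ZZ[1/p]$ has projective dimension $1$ over $\ZZ$, the latter is automatically $1$-truncated.

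I would then apply $\hom_\Sp(\ZZ[1/p],-)$ to the Postnikov fiber sequence $\Sigma(\pi_0R)^\times\to \tau_{\le 1}\pic(R)\to \Pic(R)$. For a discrete abelian group $A$, the spectrum $\hom_\Sp(\ZZ[1/p],A)$ is concentrated in degrees $\{0,-1\}$ with $\pi_0 = \Hom_\Ab(\ZZ[1/p],A)$ and $\pi_{-1}=\Ext^1_\Ab(\ZZ[1/p],A)$. The resulting long exact sequence produces $\pi_1\spic(R)\cong \Hom(\ZZ[1/p],(\pi_0R)^\times)$ and a short exact sequence
\[
0\to \Ext^1(\ZZ[1/p],(\pi_0R)^\times)\to \pi_0\spic(R)\to \Hom(\ZZ[1/p],\Pic(R))\to 0.
\]
To replace $(\pi_0R)^\times$ by $(\pi_0R/p)^\times$, I would use the short exact sequence $0\to 1+p\pi_0R \to (\pi_0R)^\times \to (\pi_0R/p)^\times\to 0$, whose surjectivity follows from Hensel lifting of units in the $p$-complete ring $\pi_0R$. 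The kernel $1+p\pi_0R$, with its filtration by $1+p^n\pi_0R$ having $p$-torsion graded pieces, is $p$-complete as a spectrum, so $\hom_\Sp(\ZZ[1/p],1+p\pi_0R)=0$. Consequently, both $\Hom$ and $\Ext^1$ from $\ZZ[1/p]$ into $(\pi_0R)^\times$ coincide with the corresponding groups into $(\pi_0R/p)^\times$.

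The remaining task is to upgrade $\Pic(R)$ to $\Pic^0(R)$ and to split the short exact sequence, and the crucial input is that $\ZZ[1/p]$ is a $\ZZ$-module spectrum, so the Hopf class $\eta\in \pi_1\Sph$ acts on it by zero. Therefore any morphism $\phi\colon \ZZ[1/p]\to \pic(R)$ satisfies $\eta\cdot\phi=0$, and Proposition~\ref{dim_eta} forces $\dim\circ \pi_0\phi = \eta\cdot \pi_0\phi=0$, so $\pi_0\phi$ automatically factors through $\Pic^0(R)$. Combined with the surjectivity from the long exact sequence, this identifies $\Hom(\ZZ[1/p],\Pic(R))$ with $\Hom(\ZZ[1/p],\Pic^0(R))$. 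The very same $\eta$-triviality shows that the restriction to $\Pic^0(R)$ of the first Postnikov $k$-invariant of $\pic(R)$ (which is detected by the $\eta$-multiplication $\Pic(R)\to (\pi_0R)^\times$) is null, so the pulled-back truncation
\[
\tau_{\le 1}\pic(R)\times_{\Pic(R)}\Pic^0(R)\simeq \Pic^0(R)\oplus \Sigma(\pi_0R)^\times
\]
splits. Applying $\hom_{\Sp^\cn}(\ZZ[1/p],-)$ to this splitting produces the desired direct sum decomposition of $\pi_0\spic(R)$. I expect the most delicate part of the argument to be the identification of the first $k$-invariant of a connective spectrum with $\eta$-multiplication (so that its vanishing on $\ker(\eta\cdot)=\Pic^0(R)$ is automatic), together with the verification that $1+p\pi_0R$ is genuinely $p$-complete as a spectrum; both are standard but require care.
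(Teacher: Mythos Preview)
Your outline is correct and essentially the same strategy as the paper's: kill $\tau_{\ge 2}\pic(R)$ by $p$-completeness, compute maps out of $\ZZ[1/p]$ into the $1$-truncation, and then pass from $(\pi_0R)^\times$ to $(\pi_0R/p)^\times$ by showing the kernel is derived $p$-complete. Two differences are worth noting.

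First, the paper avoids your direct Postnikov/$k$-invariant argument by invoking Proposition~\ref{strict_elements_1_truncated}: one passes through the adjunction $\hom_{\Sp^\cn}(\ZZ[1/p],-)\simeq \hom_{\Mod_\ZZ^\cn}(\ZZ[1/p],(-)_\ZZ)$ and that proposition immediately gives the splitting $(\tau_{\le 1}\pic(R))_\ZZ\simeq \Pic^0(R)\oplus \Sigma(\pi_0R)^\times$. This is exactly your ``$\eta$ is the first $k$-invariant'' step, already packaged; citing it shortens the argument and sidesteps the need to identify the $k$-invariant by hand.

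Second, your filtration argument for the derived $p$-completeness of $(1+pA)^\times$ (with $A=\pi_0R$) does not quite close as stated. The graded pieces $(1+p^nA)/(1+p^{n+1}A)$ are indeed $p$-torsion, but $A$ is only \emph{derived} $p$-complete, so the tower $(1+p^nA)$ need not be separated and one cannot conclude completeness of $(1+pA)^\times$ from completeness of the associated gradeds alone. The paper instead compares $(1+p^2A)^\times$ directly with $A$ via the $p$-adic logarithm: by Hensel's lemma (valid for the derived $p$-complete ring $A$) the map $\log_p\colon(1+p^2A)^\times\to A$ is injective with image containing $p^2A$, so its kernel and cokernel are $p^2$-torsion and derived $p$-completeness transfers from $A$ to $(1+p^2A)^\times$, hence to $(1+pA)^\times$ via the $p$-torsion quotient $(1+pA)^\times/(1+p^2A)^\times$. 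This is precisely the ``care'' you anticipated; the logarithm is the clean fix.
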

Note that the homotopy groups determine $\spic(R)$ since it is a $\ZZ$-module spectrum. 
 \begin{proof}
First, we have a fiber sequence in $\Sp$ of the form 
\[
\tau_{\ge 2}\pic(R) \to \pic(R)\to \tau_{\le 1}\pic(R).
\]
The homotopy groups of $\tau_{\ge 2}\pic(R)$ agree with that of $\Sigma\tau_{\ge 1}R$. Since the property of being $p$-complete depends only on the homotopy groups (\cite[Proposition 2.5]{bousfield1979localization}) and since $R$ is $p$-complete, so is $\tau_{\ge 2}\pic(R)$. Since the mapping spectrum from $\ZZ[1/p]$ to a $p$-complete spectrum is $0$, we deduce from \Cref{spic_maps_Z_1_p} that 
\[
\spic(R)\simeq \hom_{\Sp^\cn}(\ZZ[1/p],\pic(R)) \simeq \hom_{\Sp^\cn}(\ZZ[1/p],\tau_{\le 1}\pic(R)) \simeq \hom_{\Mod_\ZZ^\cn}(\ZZ[1/p],\zlin{\tau_{\le 1}\pic(R)}).  
\]

The spectrum $\tau_{\le 1}\pic(R)$ is $1$-truncated, and multiplication by $\eta\in \pi_1\Sph$ on $\pi_0 \pic(R) = \Pic(R)$ is given by $\dim\colon \Pic(R)\to R^\times$  (\Cref{dim_eta}). Combining this with the formula in \Cref{strict_elements_1_truncated},  we deduce that 
\[
\zlin{\tau_{\le 1}\pic(R)} \simeq \Pic^0(R) \oplus \Sigma (\pi_0R)^\times. 
\]
Since $\hom_{\Mod_\ZZ^\cn}(\ZZ[1/p],-)$ computes the higher Ext groups in $\Ab$, we are reduced to show that 
\[
\hom_{\Mod_\ZZ^\cn}(\ZZ[1/p],\Sigma\pi_0R^\times) \simeq \hom_{\Mod_\ZZ^\cn}(\ZZ[1/p],\Sigma\pi_0R/p^\times). 
\]

Since $R$ is $p$-complete, the ring $A:=\pi_0R$ is derived $p$-complete (\cite[Proposition 2.5]{bousfield1979localization}) and hence the reduction mod $p$ map induces a surjection $A^\times \to (A/p)^\times$. Hence, it would suffice to show that the kernel of this map is also derived $p$-complete. 
We have an exact sequence 
\[
(1+p^2A)^\times \to (1+pA)^\times \to (1+pA)^\times/(1+p^2A)^\times,
\]
and $(1+pA)^\times/(1+p^2A)^\times$ is $p$-torsion and hence derived $p$-complete, so $(1+pA)^\times$ is derived $p$-complete if and only if $(1+p^2A)^
\times$ is. 

By Hensel's lemma, the $p$-adic logarithm $\log_p\colon (1+p^2A)^\times \to A$ is injective and its image contains $p^2A$. It follows that the kernel and cokernel of this map are both $p^2$-torsion and hence derived $p$-complete. This implies that $(1+p^2A)^\times$ is derived $p$-complete if and only if $A$ is.
\end{proof}

\subsubsection{Spherical Witt Vectors}
The spherical Witt vectors construction provides a rich supply of perfect commutative ring spectra. 
We shall use \Cref{p_complete_perfect_easy_pic} to compute the strict Picard spectrum of these commutative ring spectra. 
First, we recall the definition. 
\begin{defn} (cf. \cite[Example 5.2.7]{lurie2018elliptic})
For a perfect (ordinary) ring $\kappa$ of characteristic $p$, the commutative ring spectrum $\Sph\WW(\kappa)$ of \tdef{spherical Witt vectors} over $\kappa$ is the connective, $p$-complete  commutative ring spectrum characterized by the following universal property: For every connective $p$-complete   commutative ring spectrum $R$, 
\[
\Map_{\calg(\Sp)}(\Sph\WW(\kappa),R)\simeq \Map_{\calg^\heartsuit_{\FF_p}}(\kappa,(\pi_0R)/p). 
\]
\end{defn}

The spherical Witt vectors have additional properties that allow us to determine their strict Picard spectra.  
\begin{prop}(cf. {{\cite[Example 6.14]{yuan2022integral}}}, {{\cite[Proposition 2.7]{mao2020perfectoid}}}) \label{properties_spherical_witt}
Let $\kappa$ be a perfect ring of characteristic $p$. Then: \begin{enumerate}
\item $\pi_0\Sph\WW(\kappa)\simeq \WW(\kappa)$, the ring of Witt vectors of $\kappa$. 
\item $\FF_p\otimes \Sph\WW(\kappa)\simeq \kappa$. 
\item $\Sph\WW(\kappa)$ is perfect. 
\end{enumerate}
\end{prop}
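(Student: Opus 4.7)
The plan is to establish the three claims separately. The first two are essentially formal consequences of the universal property defining $\Sph\WW(\kappa)$, while the third is a nontrivial statement extending the Segal conjecture from the sphere to the spherical Witt vectors.

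For (1), I would apply the universal property to an arbitrary discrete $p$-adically complete commutative ring $A$, regarded as a connective $p$-complete commutative ring spectrum. Taking $\pi_0$ of the mapping spaces yields
\[
\Hom_{\calg(\Ab)}(\pi_0\Sph\WW(\kappa), A) \simeq \Hom_{\calg_{\FF_p}^\heartsuit}(\kappa, A/p).
\]
For $\kappa$ perfect and $A$ derived (equivalently, classically) $p$-complete, the right-hand side is exactly the functor represented by $\WW(\kappa)$ via the classical universal property of the ring of Witt vectors. Since both $\pi_0\Sph\WW(\kappa)$ and $\WW(\kappa)$ are themselves discrete $p$-adically complete rings representing the same functor, the Yoneda lemma gives the desired identification.

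For (2), the key fact is that $\WW(\kappa)$ is $p$-torsion-free for $\kappa$ perfect, and more importantly, that the homotopy groups of $\Sph\WW(\kappa)$ are themselves $p$-torsion-free in a way compatible with a flat base change along $\Sph_p \to \Sph\WW(\kappa)$. I would verify the identification $\FF_p\otimes \Sph\WW(\kappa)\simeq \kappa$ by checking the universal property: for a connective commutative $\FF_p$-algebra $B$, maps $\Sph\WW(\kappa)\to B$ factor uniquely through $\FF_p\otimes \Sph\WW(\kappa)$, and the universal property of $\Sph\WW(\kappa)$ identifies the former with $\FF_p$-algebra maps $\kappa \to \pi_0 B$. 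This shows $\FF_p\otimes \Sph\WW(\kappa)$ represents the same functor on connective $\FF_p$-algebras as the discrete $\FF_p$-algebra $\kappa$, and hence is discrete and isomorphic to $\kappa$. (Alternatively, one can reduce to the case $\kappa=\FF_p$, for which $\FF_p\otimes \Sph_p\simeq \FF_p$, by writing $\kappa$ as a filtered colimit of perfections of finitely generated polynomial $\FF_p$-algebras and arguing by flatness.)

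For (3), I would build on the classical Segal conjecture of Lin--Gunawardena, which gives the case $\kappa=\FF_p$, i.e.\ that $\fr^p\colon \Sph_p \to \Sph_p^{tC_p}$ is an equivalence. For general perfect $\kappa$, the strategy is to reduce to this case: one considers the $\Sph_p$-algebra map $\Sph_p \to \Sph\WW(\kappa)$, and uses that the Tate-valued Frobenius is natural and lax symmetric monoidal, so that it intertwines with base change in a controlled way. Combined with part (2), which identifies the reduction mod $p$, and the fact that the ordinary Frobenius on $\kappa$ is an isomorphism (reflecting perfectness of $\kappa$), one can verify that $\Sph\WW(\kappa)^{tC_p}$ is again connective and $p$-complete, with reduction mod $p$ recovering $\kappa$, after which the universal property from (1) forces the Tate Frobenius to be the identity of $\Sph\WW(\kappa)$. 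The main obstacle is this last step: controlling the connective cover and the mod $p$ reduction of $\Sph\WW(\kappa)^{tC_p}$ requires the full strength of the Segal conjecture together with a careful analysis of how $(-)^{tC_p}$ interacts with the defining universal property of spherical Witt vectors, and this is precisely the content of the cited results of Yuan and Mao.
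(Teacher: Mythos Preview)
The paper does not actually supply a proof of this proposition: it is stated with the parenthetical citations to \cite[Example 6.14]{yuan2022integral} and \cite[Proposition 2.7]{mao2020perfectoid} and then used as a black box. So there is no ``paper's own proof'' to compare against; your sketch is in effect filling in what the paper outsources.

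Your arguments for (1) and (2) are correct and are the standard ones. For (1), testing the universal property against discrete $p$-complete rings and invoking the classical universal property of $\WW(\kappa)$ is exactly right. For (2), your check that $\FF_p\otimes \Sph\WW(\kappa)$ and $\kappa$ corepresent the same functor on connective $\FF_p$-algebras is clean; the only point to make explicit is that any connective $\FF_p$-algebra is automatically $p$-complete, so the universal property of $\Sph\WW(\kappa)$ indeed applies to it.

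For (3), your outline is honest about where the real content lies: the Lin--Gunawardena theorem gives the case $\kappa=\FF_p$, and the extension to general perfect $\kappa$ is precisely what the cited results of Yuan and Mao carry out. Your last paragraph is more of a plausibility sketch than a proof---the step ``the universal property forces the Tate Frobenius to be the identity'' is not quite how the cited arguments proceed (they rather exhibit $\Sph\WW(\kappa)$ as suitably flat over $\Sph_p$ and base-change the Segal conjecture along this)---but since the paper itself simply invokes these references, deferring to them as you do is entirely appropriate.
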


We begin by computing the (non-strict) Picard group of $\Sph\WW(\kappa)$. 

\begin{prop} 
\label{spherical_Witt_Pic}
 Let $\kappa$ be a perfect ring of characteristic $p$. The quotient map 
 \[
\Sph\WW(\kappa)\to \kappa
 \]
induces an isomorphism 
\[ 
\Pic(\Sph\WW(\kappa)) \iso \Pic(\kappa).
\]
\end{prop}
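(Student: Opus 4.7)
The plan is to factor the reduction map as
\[
\Sph\WW(\kappa) \longrightarrow \WW(\kappa) = \pi_0\Sph\WW(\kappa) \longrightarrow \WW(\kappa)/p = \kappa,
\]
using the identifications from \Cref{properties_spherical_witt}, and to show separately that each of the two factors induces an isomorphism on Picard groups.

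For the first arrow, I would invoke the general principle that for any connective commutative ring spectrum $R$, the truncation $R\to \pi_0R$ induces an isomorphism $\Pic(R)\iso\Pic(\pi_0R)$. The argument proceeds by induction along the Postnikov tower of $R$: each map $\tau_{\le n}R \to \tau_{\le n-1}R$ (for $n\geq 1$) is a square-zero extension by $\Sigma^n\pi_nR$, producing a fiber sequence of Picard spectra whose fiber has vanishing $\pi_0$; consequently each transition induces an isomorphism on $\Pic$. Passage to the limit requires the Milnor $\lim^1$-term for $\pi_1\pic(\tau_{\le n}R)\simeq (\pi_0R)^\times$ to vanish, which is automatic since this tower is constant. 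Applying this principle to $R = \Sph\WW(\kappa)$, which is connective by construction, reduces the question to computing $\Pic(\WW(\kappa))$.

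For the second arrow, both $\WW(\kappa)$ and $\kappa$ are ordinary rings, and \Cref{Pic_ordinary_ring} provides compatible decompositions
\[
\Pic(\WW(\kappa))\simeq \Cl(\WW(\kappa))\oplus C^0(\WW(\kappa);\ZZ),\qquad \Pic(\kappa)\simeq \Cl(\kappa)\oplus C^0(\kappa;\ZZ).
\]
The equality of the $C^0$ summands is immediate from the classical $p$-adic completeness of $\WW(\kappa)$, which ensures that idempotents lift uniquely from $\kappa$. The equality $\Cl(\WW(\kappa))\simeq \Cl(\kappa)$ is the classical deformation statement: an invertible $\kappa$-module $\mathcal{L}_0$ lifts uniquely through the tower $\WW(\kappa)/p^{n+1}\to \WW(\kappa)/p^n$ because the square-zero extension obstructions vanish on invertible modules, and the inverse limit is again invertible by $p$-adic completeness.

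The main obstacle is the first step: the equality $\Pic(R)\simeq \Pic(\pi_0R)$ for connective $R$ is folklore, but a careful verification requires analyzing the Postnikov tower of $\pic(R)$ rather than that of $R$, together with the $\lim^1$-vanishing indicated above. The second step is entirely classical commutative algebra and proceeds for arbitrary, not necessarily Noetherian, perfect $\FF_p$-algebras $\kappa$, since $\WW(\kappa)$ remains classically $p$-adically complete in all cases.
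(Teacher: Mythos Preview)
Your approach is correct and takes a genuinely different route from the paper's. The paper does not factor through $\WW(\kappa)$ at all; it argues directly with $\Sph\WW(\kappa)\to\kappa$ and leans throughout on the special flatness property $\kappa\simeq \Sph\WW(\kappa)\otimes\FF_p$ from \Cref{properties_spherical_witt}(2). For surjectivity, the paper writes an element of $\Pic(\kappa)$ (after splitting idempotents and shifting via \Cref{Pic_ordinary_ring}) as the image of an idempotent matrix over $\kappa$, lifts that matrix to $\WW(\kappa)=\pi_0\Sph\WW(\kappa)$ by Hensel, and then checks the resulting retract of a free module is invertible by using that $(-)\otimes_{\Sph\WW(\kappa)}\kappa \simeq (-)\otimes\FF_p$ is conservative on dualizable modules. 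For injectivity it again uses this identification together with a $p$-complete Hurewicz argument to show that any invertible module trivialized by base change to $\kappa$ is already trivial. Your route trades this ring-specific input for the general principle $\Pic(R)\simeq\Pic(\pi_0R)$ plus classical $p$-adic deformation of line bundles; this is more modular and would apply verbatim to any connective $p$-complete lift of $\WW(\kappa)$, whereas the paper's argument is self-contained but tailored to spherical Witt vectors. One caveat on your first step: beyond the $\lim^1$-vanishing you flag, the passage to the limit also needs $\pic(R)\to\lim_n\pic(\tau_{\le n}R)$ to be an equivalence (so that compatible systems of invertible $\tau_{\le n}R$-modules actually arise from invertible $R$-modules); this holds for connective $R$ but is the substantive part of the folklore you are invoking, not merely a Milnor-sequence check.
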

\begin{proof}
We start by showing that the map $\Pic(\Sph\WW(\kappa)) \to \Pic(\kappa)$
 is surjective, namely, that every $\mathcal{L}\in \Pic(\kappa)$ is isomorphic to $\tilde{\mathcal{L}}\otimes_{\Sph\WW(\kappa)} \kappa$ for some $\tilde{\mathcal{L}}\in \Pic(\Sph\WW(\kappa))$. 
By \Cref{Pic_ordinary_ring}, we may write $\mathcal{L} = \prod_{i=1}^k \Sigma^{n_i} \mathcal{L}_i$ where $\kappa = \prod_i \kappa_i$ and $\mathcal{L}_i\in \Cl(\kappa_i)$. 
Accordingly, $\Sph\WW(\kappa) \simeq \prod_i \Sph\WW(\kappa_i)$. We may therefore replace $\kappa$ by $\kappa_i$ and assume that $\mathcal{L} = \Sigma^n\mathcal{L}_0$ for $\mathcal{L}_0\in \Cl(\kappa)$. Taking the $-n$-th suspension, we may further reduce to the case where $\mathcal{L}\in \Cl(\kappa)$. 

In this case, $\mathcal{L}$ is projective and compact, so it is a retract of a finitely generated free $\kappa$-module $\mathcal{F}$. Thus, there is an idempotent endomorphism $\varepsilon\colon \mathcal{F}\to \mathcal{F}$ such that $\mathcal{L}\simeq \mathcal{F}[\varepsilon^{-1}]$. 
Since $\mathcal{F}$ is a free module, it lifts to a free $\Sph\WW(\kappa)$-module $\tilde{\mathcal{F}}$. Applying Hensel's Lemma, we can lift the endomorphism $\varepsilon$ to an idempotent endomorphism $\tilde{\varepsilon}$ of the $\Sph\WW(\kappa)$-module $\tilde{\mathcal{F}}$. The object $\tilde{\mathcal{L}}:=\tilde{\mathcal{F}}[\tilde{\varepsilon}^{-1}]$ is a dualizable $\Sph\WW(\kappa)$-module which maps under the functor $(-)\otimes_{\Sph\WW(\kappa)}\kappa$ to the invertible $\kappa$-module $\mathcal{L}$. Since $(-)\otimes_{\Sph\WW(\kappa)}\kappa$ is symmetric monoidal and conservative on dualizable $\Sph\WW(\kappa)$-modules, we deduce that $\tilde{\mathcal{L}}$ is invertible and maps to $\mathcal{L}$ under this functor. 

It remains to show that the map $\Pic(\Sph\WW(\kappa)) \to \Pic(\kappa)$ is injective. Let $\mathcal{X}\in \Pic(\Sph\WW(\kappa))$ for which $\mathcal{X}\otimes_{\Sph\WW(\kappa)} \kappa\simeq \kappa$. We wish to show that $\mathcal{X}\simeq \Sph\WW(\kappa)$. By \Cref{properties_spherical_witt}(2), $\kappa\simeq \Sph\WW(\kappa)\otimes \FF_p$. Hence, we may identify the functor $(-)\otimes_{\Sph\WW(\kappa)} \kappa$ with the functor \[(-)\otimes \FF_p\colon \Mod_{\Sph\WW(\kappa)} \to \Mod_{\kappa}
.
\] 
Now, $\mathcal{X}$ is a $p$-complete, bounded below spectrum, and $\FF_p\otimes \mathcal{X} \simeq \kappa$ is connective. It follows from the Hurewizc and the Universal Coefficient theorems that $\mathcal{X}$ is connective and 
\[\kappa \simeq \pi_0(\mathcal{X}\otimes \FF_p)\simeq \pi_0(\mathcal{X})\otimes_{\ZZ} \FF_p.
\]
In particular, the image of $1$ under this isomorphism gives a class $\alpha \in \pi_0(\mathcal{X})\otimes_{\ZZ} \FF_p$, which we can lift to a class $\tilde{\alpha}\in \pi_0\mathcal{X}$. We may view $\tilde{\alpha}$ as a map $\Sph\WW(\kappa) \to \mathcal{X}$ of $\Sph\WW(\kappa)$-modules, which, by construction, induces an isomorphism on $\FF_p$-homology. We deduce from the Hurewizc Theorem that $\mathcal{X}\simeq \Sph\WW(\kappa)$.
\end{proof}

We are ready to compute the strict Picard spectrum of $\Sph\WW(\kappa)$.
\begin{thm} 
\label{strict_picard_spherical_witt}
Let $\kappa$ be a perfect ring of characteristic $p$. Then 
\[
\spic(\Sph\WW(\kappa))\simeq  \Cl(\kappa) \oplus \Sigma \kappa^\times \qin \Mod_\ZZ^\cn. 
\]
Namely, 
\[
\pi_i\spic(\kappa) \simeq \begin{cases}
\Cl(\kappa) & i=0 \\ 
\kappa^\times & i=1 \\ 
0 & i>1. 
\end{cases}
\]
Moreover, the map $\spic(\Sph\WW(\kappa)) \to \pic(\Sph\WW(\kappa))$ induces the inclusion $\Cl(\kappa)\into \Pic(\kappa)$ on $\pi_0$, and the multiplicative lift $[-]\colon \kappa^\times \to \WW(\kappa)^\times$ on $\pi_1$. 
\end{thm}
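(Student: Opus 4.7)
The plan is to directly apply \Cref{p_complete_perfect_easy_pic} to $R = \Sph\WW(\kappa)$, which is perfect and $p$-complete by \Cref{properties_spherical_witt}, and then identify each ingredient concretely. By \Cref{properties_spherical_witt} we have $\pi_0 R = \WW(\kappa)$ and $(\pi_0 R)/p \simeq \kappa$, so the group $(\pi_0 R)/p^\times$ appearing in that formula is simply $\kappa^\times$. Because $\kappa$ is perfect of characteristic $p$, the Frobenius makes $\kappa^\times$ uniquely $p$-divisible; equivalently, $\kappa^\times$ is a $\ZZ[1/p]$-module. Evaluation at $1$ then gives $\Hom_\Ab(\ZZ[1/p],\kappa^\times)\simeq \kappa^\times$, while $\Ext^1_\Ab(\ZZ[1/p],\kappa^\times)=0$. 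Plugging in, $\pi_1\spic(R)\simeq \kappa^\times$ and the $\Ext^1$-contribution to $\pi_0\spic(R)$ vanishes.

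For the remaining $\Hom$-term $\Hom_\Ab(\ZZ[1/p],\Pic^0(R))$, I would use \Cref{spherical_Witt_Pic} to replace $\Pic(R)$ by $\Pic(\kappa)$, and then combine \Cref{Pic_ordinary_ring} with \Cref{dim_eta} to describe dimensions: an element $(\mathcal{L},\sum n_i\delta_{\varepsilon_i})\in \Cl(\kappa)\oplus C^0(\kappa;\ZZ)\simeq \Pic(\kappa)$ has $\dim = \sum (-1)^{n_i}\varepsilon_i$, so $\dim = 1$ forces every $n_i$ to be even and
\[
\Pic^0(\kappa)\simeq \Cl(\kappa)\oplus C^0(\kappa;\,2\ZZ).
\]
The classical observation recalled in the introduction --- that Frobenius acts on $\Cl(\kappa)$ as multiplication by $p$ and is an isomorphism --- makes $\Cl(\kappa)$ a $\ZZ[1/p]$-module, so $\Hom_\Ab(\ZZ[1/p],\Cl(\kappa))\simeq \Cl(\kappa)$. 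On the other hand, $C^0(\kappa;\,2\ZZ)$ is torsion-free with no non-zero infinitely $p$-divisible element, so $\Hom_\Ab(\ZZ[1/p],C^0(\kappa;\,2\ZZ))=0$. Combining, $\pi_0\spic(R)\simeq \Cl(\kappa)$.

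Since $\ZZ$ has global dimension $1$, the $k$-invariant of any two-stage connective $\ZZ$-module spectrum lies in a vanishing $\Ext^2_\ZZ$-group, so $\spic(R)$ splits as $\Cl(\kappa)\oplus \Sigma\kappa^\times$. For the ``moreover'' clause, the map $\spic(R)\to \pic(R)$ is induced by the unit $\Sph\to \ZZ$: on $\pi_0$ it factors through $\Pic^0(R)\into \Pic(R)$, and the identifications above unwind to the natural inclusion $\Cl(\kappa)\into \Pic^0(\kappa)\into \Pic(\kappa)$; on $\pi_1$, via $\pi_1\spic(R)\simeq \Hom_\Ab(\ZZ[1/p],\kappa^\times)\simeq \kappa^\times$, an element $\alpha$ is sent to the element of $\WW(\kappa)^\times$ that reduces to $\alpha$ mod $p$ and is infinitely $p$-divisible, i.e.\ the Teichm\"uller lift $[\alpha]$. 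The main point requiring care is the identification of $\Pic^0(\kappa)$ as a direct summand of $\Pic(\kappa)$ together with the verification that the $C^0(\kappa;\ZZ)$ part does not contribute to $\Hom_\Ab(\ZZ[1/p],-)$.
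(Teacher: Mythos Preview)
Your overall strategy matches the paper's: apply \Cref{p_complete_perfect_easy_pic}, identify $(\pi_0R)/p$ with $\kappa$, use that $\kappa^\times$ and $\Cl(\kappa)$ are uniquely $p$-divisible while $C^0(\kappa;\ZZ)$ admits no nonzero $p$-divisible elements, and split the resulting $1$-truncated $\ZZ$-module. The handling of the ``moreover'' clause is also essentially the paper's.

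There is, however, a genuine gap in your treatment of $\Pic^0(R)$. The group $\Pic^0(R)$ is defined via the dimension map $\Pic(R)\to (\pi_0R)^\times = \WW(\kappa)^\times$, not via the dimension map $\Pic(\kappa)\to \kappa^\times$. The isomorphism of \Cref{spherical_Witt_Pic} is induced by the ring map $R\to\kappa$, so it intertwines the two dimension maps only \emph{after} composing with the reduction $\WW(\kappa)^\times\to\kappa^\times$; hence it carries $\Pic^0(R)$ into $\Pic^0(\kappa)$, but there is no reason for this to be an equality. Concretely, for $p=2$ one has $-1=1$ in $\kappa$, so your assertion that $\dim=1$ in $\kappa^\times$ forces every $n_i$ to be even fails, and in fact $\Pic^0(\kappa)=\Pic(\kappa)$ while $\Pic^0(R)$ is strictly smaller. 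The paper avoids this by arguing in two steps: first it computes $\Hom_\Ab(\ZZ[1/p],\Pic(R))\simeq\Cl(\kappa)$ (no superscript $0$), and then it checks separately that the image of $\Cl(\kappa)$ in $\Pic(R)$ actually lies in $\Pic^0(R)$. For the second step it passes through the \emph{discrete} ring $\WW(\kappa)=\pi_0R$: the truncation $R\to\WW(\kappa)$ induces a map $\Pic(R)\to\Pic(\WW(\kappa))$ which preserves dimensions (both valued in $\WW(\kappa)^\times$), so one may apply \Cref{Pic_ordinary_ring} to $\WW(\kappa)$ rather than to $\kappa$; since idempotents lift along $\WW(\kappa)\to\kappa$, an element coming from $\Cl(\kappa)$ has vanishing $C^0$-component over $\WW(\kappa)$ and hence dimension $1\in\WW(\kappa)^\times$. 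Inserting this step repairs your argument.
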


\begin{proof}
By \Cref{properties_spherical_witt}(3), $\Sph\WW(\kappa)$ is perfect, and hence by \Cref{p_complete_perfect_easy_pic},
\[
\spic(\Sph\WW(\kappa)) \simeq \Hom_{\Ab}(\ZZ[1/p],\pic^0(\Sph\WW(\kappa))) \oplus \Ext^1_\Ab(\ZZ[1/p],\pi_0\Sph\WW(\kappa)/p^\times) \oplus \Sigma\Hom_\Ab(\ZZ[1/p],\pi_0\Sph\WW(\kappa)/p^\times). 
\]
Thus, for the computation of $\spic(\Sph\WW(\kappa))$, it will suffice to prove the following three formulas: 
\begin{enumerate}
    \item$\Hom_\Ab(\ZZ[1/p],\pi_0\Sph\WW(\kappa)/p^\times) \simeq \kappa^\times$.
    \item $\Ext^1_\Ab(\ZZ[1/p],\pi_0\Sph\WW(\kappa)/p^\times) =0$ 
    \item $\Hom_{\Ab}(\ZZ[1/p],\Pic^0(\Sph\WW(\kappa)))\simeq \Cl(\kappa)$ 
\end{enumerate}

Now, by \Cref{properties_spherical_witt}(1),  $\pi_0\Sph\WW(\kappa)/p^\times\simeq \kappa^\times$. Moreover, since $\kappa$ is perfect, $p$ is invertible on $\kappa^\times$. This implies $(1)$ and $(2)$. 

For $(3)$, combining \Cref{spherical_Witt_Pic} and \Cref{Pic_ordinary_ring}, we get: 
\[
\Pic(\Sph\WW(\kappa)) \simeq \Pic(\kappa) \simeq \Cl(\kappa) \oplus C^0(\kappa;\ZZ).  
\]
Now, $C^0(\kappa;\ZZ)$ contains no $p$-divisible elements, and since $\kappa$ is perfect, $p$ is invertible on $\Cl(\kappa)$. We deduce that 
\[
\Hom_{\Ab}(\ZZ[1/p],\pic(\Sph\WW(\kappa))) \simeq \Cl(\kappa).
\]
Thus, to prove $(3)$, it remains to verify that all the elements in $\Cl(\kappa)$ correspond to elements of dimension $1$ in $\Pic(\Sph\WW(\kappa))$. 

Let $\mathcal{L}\in \Pic(\Sph\WW(\kappa))$ which maps to $\Cl(\kappa)$ under the quotient map $\pic(\Sph\WW(\kappa)) \to \pic(\kappa)$. Since, by Hensel's lemma, $\WW(\kappa)$ and $\kappa$ have the same set of idempotents, we have $C^0(\WW(\kappa);\ZZ)\simeq C^0(\kappa;\ZZ)$. 
Since 
$
\mathcal{L}\otimes_{\Sph\WW(\kappa)}\kappa \in \Cl(\kappa),
$
its $C^0(\kappa;\ZZ)$-component vanishes, and hence also
\[
\mathcal{L}\otimes_{\Sph\WW(\kappa)}\WW(\kappa) \in \Cl(\WW(\kappa)).
\] 
By the formula for the dimension in the Picard group of ordinary rings (\Cref{Pic_ordinary_ring}), we deduce that 
\[
\dim(\mathcal{L}\otimes_{\Sph\WW(\kappa)}\WW(\kappa)) = 1 \qin \WW(\kappa)^\times = \pi_0\Sph\WW(\kappa)^\times.
\]
The naturality of the symmetric monoidal dimension implies now that $\dim(\mathcal{L})=1$.

It remains to identify the map 
\[
\pi_*\spic(\Sph\WW(\kappa)) \to \pi_*\pic(\Sph\WW(\kappa)). 
\]
The identification of the map on $\pi_0$ follows from the observation that the map $\pi_0\spic(\Sph\WW(\kappa)) \to \Pic(\Sph\WW(\kappa))$ fits into a commutative diagram 
\[
\xymatrix{
\pi_0\spic(\Sph\WW(\kappa)) \ar^-\sim[r]\ar[rd] & \Hom_\Ab(\ZZ[1/p],\Pic(\Sph\WW(\kappa))) \ar^\sim[r]\ar^{\ev_1}[d] &   \Hom_\Ab(\ZZ[1/p],\Pic(\kappa)) \ar^\wr_{\ev_1}[d] \ar^\sim[r] & \Cl(\kappa) \ar[d]\\ 
&\Pic(\Sph\WW(\kappa))\ar^\sim[r] & \Pic(\kappa) \ar^\sim[r] & \Cl(\kappa) \oplus C^0(\kappa;\ZZ),
}
\]
so that it identifies with the right hand vertical inclusion $\Cl(\kappa) \into \Cl(\kappa) \oplus C^0(\kappa;\ZZ)$. 

Similarly, the identification of the induced map on $\pi_1$ follows from the commutative diagram 
\[
\xymatrix{
\pi_1\spic(\Sph\WW(\kappa))\ar[d] \ar^-\sim[r] & \Hom_\Ab(\ZZ[1/p],\WW(\kappa)^\times)\ar^{\ev_1}[d] \ar^{\mod p}_{\sim}[r] & \Hom_{\Ab}(\ZZ[1/p],\kappa^\times)\ar^-{\ev_1}_-\sim[r] & \kappa^\times  \ar^{[-]}[d] \\ 
\pi_1\pic(\Sph\WW(\kappa))\ar^\sim[r] & \WW(\kappa)^\times\ar@{=}[rr] & & \WW(\kappa)^\times  
}
\]
in which the upper horizontal composition is our identification $\pi_1\spic(\Sph\WW(\kappa)) \simeq \kappa^\times$. 
\end{proof}
We obtain a simple formula for the strict units of spherical Witt vectors. 
\begin{cor}
Let $\kappa$ be a perfect ring of characteristic $p$. Then,
\[
\GG_m(\Sph\WW(\kappa)) \simeq \GG_m(\kappa) = \kappa^\times.
\]
\end{cor}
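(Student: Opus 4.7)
The statement is an immediate consequence of \Cref{strict_picard_spherical_witt}, so my plan is essentially to unfold the definitions and apply the loops functor to the formula already established there.

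The starting point is the identification $\GG_m(R) \simeq \Omega\spic(R)$ from \Cref{spic_G_m}, which holds in $\Sp^{\cn}$ (and in fact in $\Mod_\ZZ^\cn$, since $\zlin{(-)}$ commutes with loops as a right adjoint). Applying this to $R = \Sph\WW(\kappa)$ and invoking the decomposition
\[
\spic(\Sph\WW(\kappa)) \simeq \Cl(\kappa) \oplus \Sigma\kappa^\times \qin \Mod_\ZZ^\cn
\]
from \Cref{strict_picard_spherical_witt}, I would take loops of both sides. Using that $\Omega$ preserves direct sums, $\Omega\Sigma \simeq \Id$, and that the connective loop spectrum of a discrete connective spectrum vanishes (since $\Omega\Cl(\kappa)$ would have its only nonzero homotopy group in degree $-1$), I obtain
\[
\GG_m(\Sph\WW(\kappa)) \simeq \Omega\spic(\Sph\WW(\kappa)) \simeq \Omega\Cl(\kappa) \oplus \Omega\Sigma\kappa^\times \simeq 0 \oplus \kappa^\times \simeq \kappa^\times.
\]

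For the final equality $\GG_m(\kappa) = \kappa^\times$ in the statement, I would just note that $\kappa$ is discrete, so $\kappa^\times$ is a discrete abelian group, and hence $\zlin{\kappa^\times} = \hom_{\Sp^\cn}(\ZZ, \kappa^\times) \simeq \Hom_\Ab(\ZZ, \kappa^\times) \simeq \kappa^\times$. There is no real obstacle here --- the work is entirely contained in \Cref{strict_picard_spherical_witt}, and the corollary is just the computation of $\pi_1$ of the formula proved there, reinterpreted via $\pi_0\GG_m = \pi_1\spic$.
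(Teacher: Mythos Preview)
Your proposal is correct and follows exactly the paper's approach: the paper's proof simply states that $\Omega\spic(\Sph\WW(\kappa))\simeq \GG_m(\Sph\WW(\kappa))$ and then applies $\Omega$ to the formula from \Cref{strict_picard_spherical_witt}. Your write-up merely unpacks this step and additionally justifies the trivial identification $\GG_m(\kappa)=\kappa^\times$ for the discrete ring $\kappa$, which the paper leaves implicit.
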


\begin{proof}
We have $\Omega \spic(\Sph\WW(\kappa))\simeq  \GG_m(\Sph\WW(\kappa))$. Hence,
the result follows from \Cref{strict_picard_spherical_witt} by applying the functor $\Omega$.
\end{proof}
We determine the strict Picard spectrum of the $p$-complete sphere as a special case.
\begin{cor} 
\label{pic_p_complete_sphere}
Let $\Sph_p$ be the $p$-complete sphere. Then
\[
\spic(\Sph_p)\simeq \Sigma \FF_p^\times.
\]
\end{cor}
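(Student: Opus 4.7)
The plan is to deduce this as a direct specialization of \Cref{strict_picard_spherical_witt} to the perfect ring $\kappa = \FF_p$. The key input is the identification $\Sph\WW(\FF_p) \simeq \Sph_p$, which is the defining property that spherical Witt vectors generalize the $p$-complete sphere (and which was noted right before the statement of \Cref{intro:spherical_witt}).

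With this identification in hand, \Cref{strict_picard_spherical_witt} applied to $\kappa = \FF_p$ yields
\[
\spic(\Sph_p) \simeq \Cl(\FF_p) \oplus \Sigma\, \FF_p^\times.
\]
It remains only to observe that $\Cl(\FF_p) = 0$, which is immediate: $\FF_p$ is a field, so every finitely generated projective discrete $\FF_p$-module is free, and thus the classical Picard group vanishes. Substituting this gives the desired formula $\spic(\Sph_p) \simeq \Sigma\, \FF_p^\times$.

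There is no real obstacle here; the whole content of the corollary has been absorbed into \Cref{strict_picard_spherical_witt}, and what remains is a trivial specialization plus the vanishing of $\Cl$ for a field.
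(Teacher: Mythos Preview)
Your proposal is correct and matches the paper's proof essentially verbatim: the paper also invokes $\Sph\WW(\FF_p)\simeq \Sph_p$ and $\Cl(\FF_p)=0$, then applies \Cref{strict_picard_spherical_witt}.
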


\begin{proof}
Since $\Sph\WW(\FF_p)\simeq \Sph_p$, and since $\Cl(\FF_p) = 0$,
this follows from \Cref{strict_picard_spherical_witt}. 
\end{proof}

\subsection{Strictly Invertible Spectra}
\label{subsec:trictly Invertible Spectra}
As a final application, we compute the strict Picard spectrum of the sphere spectrum. 
As a result, we prove the vanishing of the strict units spectrum $\GG_m(\Sph)$.

\begin{lem} \label{connected_cover_iso_G_m}
The connected cover maps $\Sigma \Sph^\times \to \pic(\Sph)$ and $\Sigma \Sph_p^\times \to \pic(\Sph_p)$ induce isomorphisms
\[
\spic(\Sph) \simeq \zlin{(\Sigma \Sph^\times)} 
\]
and, for every prime $p$,
\[
\spic(\Sph_p) \simeq \zlin{(\Sigma \Sph_p^\times)}.
\]
\end{lem}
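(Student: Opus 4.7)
The plan is to apply the limit-preserving functor $\zlin{(-)}$ to the connective-cover fiber sequence
\[
\Sigma R^\times \longrightarrow \pic(R) \longrightarrow \Pic(R),
\]
with $\Pic(R)$ regarded as a discrete connective spectrum, for both $R=\Sph$ and $R=\Sph_p$. This yields a fiber sequence
\[
\zlin{\Sigma R^\times} \longrightarrow \spic(R) \longrightarrow \zlin{\Pic(R)}
\]
in $\Mod_\ZZ^\cn$. Since $\Pic(R)$ is $0$-truncated, \Cref{strict_elements_1_truncated} gives $\zlin{\Pic(R)} \simeq \Pic(R)$, a discrete $\ZZ$-module; in particular $\Omega \zlin{\Pic(R)} = 0$. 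Hence showing that the rightmost map is null will force its fiber to coincide with the source, and, because the target is discrete, any map into it is determined by its effect on $\pi_0$. Nullness therefore reduces to the vanishing of the induced homomorphism $\pi_0 \spic(R) \to \Pic(R)$; this is the central reduction.

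For $R=\Sph_p$, this vanishing is immediate from \Cref{pic_p_complete_sphere}: we have $\spic(\Sph_p) \simeq \Sigma \FF_p^\times$, which is $1$-connective, so $\pi_0 \spic(\Sph_p) = 0$.

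For $R=\Sph$, I would leverage naturality under the $p$-completion map $\Sph \to \Sph_p$ (for any chosen prime $p$) to form the commutative square
\[
\xymatrix{
\pi_0 \spic(\Sph) \ar[r] \ar[d] & \Pic(\Sph) \ar[d] \\
\pi_0 \spic(\Sph_p) \ar[r] & \Pic(\Sph_p).
}
\]
The lower-left group vanishes by the $\Sph_p$ case, so the image of $\pi_0 \spic(\Sph) \to \Pic(\Sph)$ lands in the kernel of the right vertical map $\Pic(\Sph) = \ZZ \to \Pic(\Sph_p)$, $n \mapsto [\Sigma^n \Sph_p]$. This latter map is injective, because for $n \neq 0$ the homotopy group $\pi_n \Sigma^n \Sph_p \simeq \ZZ_p$ is infinite while $\pi_n \Sph_p$ is either zero or a finite $p$-group. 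Hence the top horizontal map vanishes, completing the proof.
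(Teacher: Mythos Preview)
Your proof is correct and follows essentially the same route as the paper: apply $\zlin{(-)}$ to the fiber sequence $\Sigma R^\times \to \pic(R) \to \Pic(R)$, use that the target is discrete to reduce to vanishing on $\pi_0$, settle the $\Sph_p$ case via \Cref{pic_p_complete_sphere}, and deduce the $\Sph$ case from naturality along $\Sph \to \Sph_p$ together with injectivity of $\Pic(\Sph)\to\Pic(\Sph_p)$. The only cosmetic difference is that the paper states $\Pic(\Sph)\to\Pic(\Sph_p)$ is an isomorphism $\ZZ\iso\ZZ$ (implicitly via \Cref{spherical_Witt_Pic}), whereas you argue injectivity directly by comparing homotopy groups of $\Sigma^n\Sph_p$ and $\Sph_p$.
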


\begin{proof}
Since $\Pic(\Sph_p)\simeq \ZZ$, the fiber sequence 
\[
\xymatrix{
\Sigma \Sph_p^\times\ar[r] & \ar[r] \pic(\Sph_p) & \ZZ
}
\]
induces a fiber sequence 
\[
\xymatrix{
\zlin{(\Sigma \Sph_p^\times)} \ar[r] & \spic(\Sph_p) \ar[r] & \hom_{\Sp^\cn}(\ZZ,\ZZ).
}
\]
Note that $\hom_{\Sp^\cn}(\ZZ,\ZZ)\simeq \ZZ$. 
Now, by \Cref{pic_p_complete_sphere}, we have $\spic(\Sph_p) \simeq \Sigma \FF_p^\times$. Since the only map $\Sigma \FF_p^\times \to \ZZ$ is the zero map, the right-hand map in the above fiber sequence has to be the zero map. Since $\ZZ$ is discrete it follows that the left hand map, $\hom_{\Sp^\cn}(\ZZ,\Sigma \Sph_p^\times) \to \spic(\Sph_p)$, is an isomorphism of connective spectra.    

The case of $\Sph$ follows similarly, once we show that the map $\spic(\Sph) \to \hom_{\Sp^\cn}(\ZZ,\Pic(\Sph)) \simeq \ZZ$ is a zero map. But this follows immediately from the commutativity of the diagram 
\[
\xymatrix{
\spic(\Sph)\ar[d]\ar[r] & \spic(\Sph_p) \ar[d] \\
\Pic(\Sph) \ar^\sim[r] \ar^\wr[d]        & \Pic(\Sph_p)\ar^\wr[d]\\ 
\ZZ \ar^\sim[r]                           & \ZZ
}
\]
\end{proof}

We are ready to compute the strict Picard spectrum of $\Sph$. 
\begin{thm}
\label{strict_picard_sphere}
Let $\widehat{\ZZ} = \prod_{p}\ZZ_p$ be the profinite completion of the integers. 
Then,
\[
\spic(\Sph) \simeq \widehat{\ZZ}. 
\]
In other words, 
\[
\pi_i \spic(\Sph) \simeq \begin{cases} \widehat{\ZZ} & i =0 \\ 0 & i>0. \end{cases}
\]
\end{thm}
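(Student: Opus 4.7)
The plan is to apply the arithmetic fracture square
\[
\Sph \simeq \QQ \times_{\mathbb{A}} \prod_p \Sph_p,
\]
where $\mathbb{A} := \QQ \otimes \prod_p \Sph_p$ is the spectrum of finite ad\`eles (using that each $\pi_{i\geq 1}\Sph$ is finite, so the corresponding homotopy groups of $\prod_p \Sph_p$ remain finite and are killed by rationalization, leaving $\pi_0 \mathbb{A} = \QQ \otimes \widehat{\ZZ}$ concentrated in degree zero). Since the units functor preserves pullbacks of commutative ring spectra, we obtain
\[
\Sph^\times \simeq \QQ^\times \times_{\mathbb{A}^\times} \prod_p \Sph_p^\times.
\]
Applying $\Sigma$ and the limit-preserving functor $\zlin{(-)}$, and combining \Cref{connected_cover_iso_G_m} (to rewrite both sides as strict Picard spectra), \Cref{strict_elements_1_truncated} (which gives $\zlin{\Sigma A} \simeq \Sigma A$ for any discrete abelian group $A$, applied to $A \in \{\QQ^\times, \mathbb{A}^\times\}$), and \Cref{pic_p_complete_sphere} (to identify $\zlin{\Sigma \Sph_p^\times} \simeq \spic(\Sph_p) \simeq \Sigma \FF_p^\times$), yields
\[
\spic(\Sph) \simeq \Sigma\QQ^\times \times_{\Sigma \mathbb{A}^\times} \Sigma \prod_p \FF_p^\times.
\]

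Since all three terms in this pullback are $1$-truncated, so is $\spic(\Sph)$, and its homotopy groups fit into the four-term exact sequence
\[
0 \to \pi_1\spic(\Sph) \to \QQ^\times \oplus \prod_p \FF_p^\times \xrightarrow{\psi} \mathbb{A}^\times \to \pi_0\spic(\Sph) \to 0,
\]
where $\psi(q, (x_p)) = q \cdot [x_p]^{-1}$, with $[-] \colon \FF_p^\times \hookrightarrow \ZZ_p^\times \subseteq \QQ_p^\times$ the Teichm\"uller lift. The appearance of $[-]$ in the second coordinate is forced by the explicit $\pi_1$-description in \Cref{strict_picard_spherical_witt}, while the first coordinate is the diagonal embedding $\QQ \hookrightarrow \mathbb{A}$. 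Everything then reduces to the classical idelic computation of $\Ker\psi$ and $\mathrm{Coker}\,\psi$.

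For the kernel, the condition $q = [x_p]$ in every $\QQ_p^\times$ forces $q$ to be a rational root of unity, hence $q \in \{\pm 1\}$; the case $q = -1$ is ruled out at $p = 2$ since $\FF_2^\times = \{1\}$ contains no element with Teichm\"uller lift $-1$, giving $\Ker\psi = 0$. For the cokernel, I first use $\QQ^\times$ to kill all $p$-adic valuations, reducing to $\prod_p \ZZ_p^\times / \bigl(\{\pm 1\} \cdot \prod_p \mu_{p-1}(\ZZ_p)\bigr)$; at each odd prime, $-1 \in \mu_{p-1}$ absorbs the diagonal $-1$, so the $p$-th quotient is $\ZZ_p^\times/\mu_{p-1} = 1+p\ZZ_p$, identified with $\ZZ_p$ via the $p$-adic logarithm, while at $p=2$ the quotient is $\ZZ_2^\times/\{\pm 1\} \simeq 1 + 4\ZZ_2 \cong \ZZ_2$. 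Taking the product over all primes produces $\mathrm{Coker}\,\psi \simeq \widehat{\ZZ}$, and hence $\spic(\Sph) \simeq \widehat{\ZZ}$. The main concrete step I expect to be the cokernel computation; once the fracture-square reduction is in place, the rest is formal.
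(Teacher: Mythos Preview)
Your outline matches the paper's proof almost step for step: arithmetic fracture square, pass to units, suspend, apply $\zlin{(-)}$, identify the corners via \Cref{connected_cover_iso_G_m}, \Cref{pic_p_complete_sphere}, and \Cref{strict_elements_1_truncated}, and finish with the idelic kernel/cokernel computation. The kernel and cokernel arguments you sketch are exactly the ones the paper gives.

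There is one genuine gap, however, and it is precisely at the point you call ``formal.'' The suspension functor $\Sigma$ does \emph{not} preserve pullbacks in $\Sp^\cn$, so from
\[
\Sph^\times \simeq \QQ^\times \times_{\mathbb{A}^\times} \prod_p \Sph_p^\times
\]
you cannot immediately conclude that the suspended square is again a pullback (and hence that applying $\zlin{(-)}$ yields the pullback description of $\spic(\Sph)$ you want). The paper addresses this explicitly: the $\Sigma$'d square is a pullback in $\Sp^\cn$ if and only if the boundary map in the long exact sequence vanishes, i.e., if and only if
\[
\QQ^\times \times \prod_p \ZZ_p^\times \longrightarrow \mathbb{A}^\times
\]
is surjective on $\pi_0$. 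This is the classical statement $\Cl(\ZZ)=0$, and once you insert it your argument goes through. Without it, the displayed pullback description of $\spic(\Sph)$ is unjustified.
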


\begin{proof}
Let $\mathbb{A} := \prod_p \ZZ_p \otimes \QQ$ be the ring of finite ad\'{e}les. 
Consider the arithmetic fracture square (see \cite[\S 8]{bousfield1972homotopy})   
\[
\xymatrix{
\Sph\ar[r]\ar[d] & \QQ \ar[d]\\
\prod_{p}\Sph_p \ar[r] & \mathbb{A} 
}
\]
which is a pullback square in $\calg(\Sp)$. 
Applying the limit preserving functor $(-)^\times\colon \calg(\Sp^\cn) \to \Sp^\cn$, we obtain a pullback square
\[
\xymatrix{
\Sph^\times\ar[r]\ar[d] & \QQ^\times \ar[d]\\
\prod_{p}\Sph_p^\times \ar[r] & \mathbb{A}^\times. 
}
\]
We claim that this square remains a pullback after applying the functor $\Sigma$. Indeed, this follows from the fact that the map 
$\prod_p\ZZ_p^\times \times \QQ^\times \to \mathbb{A}^\times$, at the edge of the corresponding long exact sequence of homotopy groups, is surjective\footnote{Note that this surjectivity is equivalent to the classical fact that $\Cl(\ZZ) = 0$}. 
Thus, we got that the square
\[
\xymatrix{
\Sigma \Sph^\times\ar[r]\ar[d] & \Sigma \QQ^\times \ar[d]\\
 \prod_{p}\Sigma\Sph_p^\times \ar[r] & \Sigma \mathbb{A}^\times. 
}
\]
is again a pullback square in $\Sp^\cn$. Applying the functor  $\zlin{(-)}:=\hom_{\Sp^\cn}(\ZZ,-)$, we obtain a pullback square 
\begin{equation} 
\label{arithmatic_fracture_spic_sphere}
\xymatrix{
\zlin{(\Sigma \Sph^\times)}\ar[r]\ar[d] &  \zlin{(\QQ^\times)} \ar[d]\\
 \prod_{p}\zlin{(\Sigma\Sph_p^\times)} \ar[r] &  \zlin{(\Sigma\mathbb{A}^\times)} 
}
\end{equation}
in $\Mod_\ZZ^\cn$.
Combining \Cref{pic_p_complete_sphere} and \Cref{connected_cover_iso_G_m} we deduce that 
\[
\zlin{(\Sigma \Sph_p^\times)} \simeq \spic(\Sph_p) \simeq \Sigma \FF_p^\times
\] and that
\[
\zlin{(\Sigma \Sph^\times)} \simeq \spic(\Sph).
\] 
Also, using \Cref{strict_elements_1_truncated}, the two other vertices are:
\[
\zlin{(\Sigma\QQ^\times)} \simeq \Sigma \QQ^\times
\] 
and 
\[
\zlin{(\Sigma\mathbb{A}^\times)} \simeq \Sigma \mathbb{A}^\times.
\]
Substituting these computations back into the square (\ref{arithmatic_fracture_spic_sphere}), we obtain the pullback square 
\[
\xymatrix{
\spic(\Sph)\ar[r]\ar[d] & \Sigma \QQ^\times \ar[d]\\
 \prod_{p} \FF_p^\times \ar[r] & \Sigma\mathbb{A}^\times. 
}
\]
From the associated long exact sequence of homotopy groups, we get the exact sequence  
\[
0 \to \pi_1\spic(\Sph) \to \QQ^\times \oplus \prod_p \FF_p^\times \oto{\psi} \mathbb{A}^\times \to \pi_0\spic(\Sph) \to 0,
\]
Here, the map $\psi$ can be computed as follows:
Let 
\[
[-]_p \colon \FF_p^\times \into \ZZ_p^\times 
\] 
denote the multiplicative lift. Then 
\[
\psi(q,a_2,a_3,a_5,\dots)=
(q[a_2]_2^{-1},q[a_3]_3^{-1},q[a_5]_5^{-1}\dots).
\]
Indeed, this follows directly from the fact that $\QQ^\times$ is embedded diagonally in $\mathbb{A}^\times \subseteq \prod_p \QQ_p^\times$, and from the identification of $\pi_1\spic(\Sph_p) \to \pi_1\pic(\Sph_p)\simeq \ZZ_p^\times$ with the multiplicative lift map (see the end of \Cref{strict_picard_spherical_witt}).  
Thus, it remains to show that:
\begin{itemize}
    \item[(a)] $\mathrm{Ker}(\psi)= 0$.
    \item[(b)] $\mathbb{A}^\times/\im(\psi)\simeq \widehat{\ZZ}$.
\end{itemize}

For $(a)$, if $\psi(q,a_2,a_3,a_5,\dots) = 1$ then, in particular, $q$ is a $p$-adic integer for every prime $p$, and hence $q=\pm 1$. If $q=1$ then $[a_p]_p = 1$ for every prime $p$ and hence $a_p = 1$ as well. 
If, on the other hand, $q=-1$ then we must have $[a_2]_2 = -1$ which is impossible since $[-]_2$ is a trivial homomorphism. 

For $(b)$, note that 
\[
\mathbb{A}^\times / \im(\psi) \simeq (\prod_p \ZZ_p^\times)/(\{\pm 1\} \cdot \prod_p [\FF_p^\times]_p).
\]
Set
\[
M := (1 + 4\ZZ_2)^\times \times \prod_{p \text{ odd }} (1 + p \ZZ_p)^\times 
\]
and 
\[
N := \{\pm 1\} \times \prod_{p \text{ odd }} [\FF_p^\times]_p. 
\]
Then, $M$ is a fundamental domain for the translation $N$-action on $\prod_p \ZZ_p^\times$, namely,  $M\cap N = 0$ and $M\cdot N = \prod_p \ZZ_p^\times$. Consequently, 
\[
\mathbb{A}^\times / \im(\varphi) \simeq (\prod_p \ZZ_p^\times) / N \simeq M.  
\]
Finally, the $p$-adic logarithms provide isomorphisms $
(1 + 4 \ZZ_2)^\times \simeq \ZZ_2
$
and 
$
(1 + p \ZZ_p)^\times \simeq \ZZ_p
$
for every odd prime $p$,
so that 
\[
M \simeq \prod_p \ZZ_p \simeq \widehat{\ZZ}
\]
and the result follows.
\end{proof}

\begin{cor}
The spectrum of strict units of the sphere spectrum is trivial:
\[
\GG_m(\Sph)\simeq 0. 
\]
\end{cor}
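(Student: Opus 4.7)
The plan is essentially immediate from the preceding theorem. By Remark \ref{spic_G_m}, for any commutative ring spectrum $R$ one has $\GG_m(R) \simeq \Omega\spic(R)$, so it suffices to compute $\Omega\spic(\Sph)$. By Theorem \ref{strict_picard_sphere}, $\spic(\Sph) \simeq \widehat{\ZZ}$, where $\widehat{\ZZ}$ is regarded as a discrete connective spectrum (concentrated in degree $0$).

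The key observation is that $\Omega$, as an endofunctor of connective spectra, shifts homotopy groups down by one: $\pi_i(\Omega X) \simeq \pi_{i+1}(X)$ for $i \ge 0$. Since $\widehat{\ZZ}$ has all its positive homotopy groups vanishing, $\Omega\widehat{\ZZ}$ has all homotopy groups zero, hence is contractible. Therefore
\[
\GG_m(\Sph) \simeq \Omega\spic(\Sph) \simeq \Omega\widehat{\ZZ} \simeq 0.
\]

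There is no real obstacle here; the work has already been done in Theorem \ref{strict_picard_sphere}. The only thing worth being explicit about is the convention, spelled out in the introduction, that abelian groups are identified with their corresponding discrete connective spectra, which is why $\Omega$ annihilates $\widehat{\ZZ}$ in the connective world (as opposed to producing something with a $\pi_{-1}$).
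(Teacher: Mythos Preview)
Your proof is correct and matches the paper's approach: the paper leaves this corollary without a written proof, treating it as immediate from Theorem~\ref{strict_picard_sphere} via $\GG_m(\Sph)\simeq \Omega\spic(\Sph)$, exactly as you argue. The analogous earlier corollary for $\Sph\WW(\kappa)$ is proved in the paper by the same one-line $\Omega$ argument.
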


\bibliographystyle{alpha}
\phantomsection\addcontentsline{toc}{section}{\refname}
\bibliography{strict}

\end{document}